\documentclass[reqno,12pt]{amsart}
\usepackage{graphicx} 
\usepackage[margin=1in]{geometry}

\usepackage{mathtools, amssymb, tikz, amsthm, amsmath}
\usetikzlibrary{calc}
\usepackage[pdfencoding=auto,psdextra]{hyperref}
\DeclareMathAlphabet{\mathbbold}{U}{bbold}{m}{n}
\newcommand{\one}{\mathbbold{1}}
\usepackage{faktor}
\usepackage{enumitem}
\usepackage[dvipsnames]{xcolor}

\usepackage[
    backend=biber,
    isbn=false,
    url=false,
    sorting=nyt,
    maxbibnames=8,
]{biblatex}
\theoremstyle{plain}
\newtheorem{introtheorem}{Theorem}

\newtheorem{theorem}{Theorem}[section]

\newtheorem{proposition}[theorem]{Proposition}
\newtheorem{lemma}[theorem]{Lemma}
\newtheorem{corollary}[theorem]{Corollary}

\theoremstyle{definition}
\newtheorem{definition}[theorem]{Definition}
\newtheorem*{definition*}{Definition}
\newtheorem{remark}[theorem]{Remark}
\newtheorem*{remark*}{Remark}

\newtheorem{example}[theorem]{Example}

  \newcommand{\ce}{\mathcal{E}}
  \newcommand{\cg}{\mathcal{G}}
  \newcommand{\cl}{\mathcal{L}}
  
  \newcommand{\cv}{\mathcal{V}}

 \newcommand{\C}{\mathbb{C}}
 \newcommand{\R}{\mathbb{R}}

 \newcommand{\ds}{\displaystyle}
 \newcommand{\poly}{\C[x,y]}

 \newcommand{\splines}[1]{\mathcal{M}_{#1}} 
 \newcommand{\Ms}{\splines{H}} 
 \newcommand{\lqot}[1]{\mathrm{L}_{#1}} 
 \newcommand{\rqot}[1]{\mathrm{R}_{#1}} 
 \newcommand{\spl}[1]{\overline{#1}} 
 \newcommand{\hinv}{\mathrm{inv}_H} 
 \newcommand{\hlen}{\ell_H}
 
\newcommand{\supp}{\mathrm{supp}} 

\DeclareMathOperator{\Hilb}{Hilb}
\DeclareMathOperator{\spann}{span}
\DeclareMathOperator{\sgn}{sgn}

 \newcommand{\deftobe}{\coloneq} 
 \DeclareMathOperator{\GL}{GL}

  \DeclarePairedDelimiter{\lrverts}{\lvert}{\rvert}
  \newcommand{\abs}{\lrverts*} 
  \newcommand{\card}{\lrverts*} 
  \newcommand{\llangle}{\left\langle}
  \newcommand{\rrangle}{\right\rangle}

  \newcommand{\nst}[1][]{\mathchoice{\;}{\;}{\,}{\,}
    #1\vert 
    \allowbreak\mathchoice{\;}{\;}{\,}{\,}\mathopen{}}%
    \newcommand{\st}{}
    \DeclarePairedDelimiterX{\nsetof}[1]{\lbrace}{\rbrace}{\renewcommand{\st}{\nst[\delimsize]}#1}
    \newcommand{\setof}{\nsetof*}

  \DeclarePairedDelimiterX{\ngenby}[1]{\langle}{\rangle}{\renewcommand{\st}{\nst[\delimsize]}#1}
  \newcommand{\genby}{\ngenby*}

 \definecolor{darkgreen}{rgb}{0, .6, 0}

\usepackage[normalem]{ulem}

\title{Dot actions and Hessenberg spaces for dihedral groups}

\author[Boretsky]{Jonathan Boretsky}
\address{Department of Mathematics and Statistics, McGill University, Montreal, QC}
\email{\href{mailto:jonathan.boretsky@mcgill.ca}{jonathan.boretsky@mcgill.ca}}

\author[Hodges]{Reuven Hodges}
\address{Department of Mathematics, University of Kansas, Lawrence, KS 66045}
\email{\href{mailto:rmhodges@ku.edu}{rmhodges@ku.edu}}

\author[Lesnevich]{Nathan Lesnevich}
\address{Department of Mathematics, Oklahoma State University, Stillwater, OK 74078}
\email{\href{mailto:nlesnev@okstate.edu}{nlesnev@okstate.edu}}

\author[McAllister]{Tyrrell McAllister}
\address{Department of Mathematics and Statistics, University of Wyoming, Laramie, WY, 82071}
\email{\href{mailto:tmcallis@uwyo.edu}{tmcallis@uwyo.edu}}

\author[Pappe]{Joseph Pappe}
\address{Department of Mathematics, University of Virginia, Charlottesville, VA 22904}
\email{\href{mailto:yyz8xu@virginia.edu}{yyz8xu@virginia.edu}}

\keywords{splines, dihedral group, dot action, generators, reflection group, Schubert}

\begin{document}

\begin{abstract}
Splines on Weyl groups model the equivariant cohomology of regular semisimple Hessenberg varieties, and the natural group action on them encodes the monodromy action. The resulting representations are of great interest in algebraic combinatorics for their connections to Hecke algebra characters, LLT polynomials, and chromatic symmetric functions. We study the analogous construction for dihedral groups, which in general have no underlying geometry. Using purely algebraic methods, we show that these spline modules are free, construct explicit bases, and compute their Hilbert series, recovering the familiar properties of the geometric setting. We also determine the associated dihedral group representations completely. Unlike what is conjectured for the symmetric group case, these need not be permutation representations.
\end{abstract}

\maketitle

\setcounter{tocdepth}{1}
\tableofcontents

\section{Introduction}

Let $I_2(n)$, with $n\geq 3$, be the dihedral group of order $2n$, with root system $\Phi$ embedded in $\C^2$. A \emph{Hessenberg space} $H$ is a lower order ideal in the poset of positive roots $\Phi^+$. We insist that $H$ contain the simple roots. Each root $\gamma \in \Phi$ is naturally associated to both a linear form $t_\gamma$ in the polynomials $\C[x,y]$ and a reflection $s_\gamma$ in $I_2(n)$. As such, the \emph{module of splines} given by $H$ is defined to be the set 
\[
\Ms = \bigoplus_{i \geq 0} (\Ms)_i \coloneqq \left\{\left.\spl{\rho} \in \!\!\!\prod_{w \in I_2(n)}\!\!\poly\;\right|\;\spl{\rho}(w) - \spl{\rho}(ws_\gamma) \in \llangle t_{w(\gamma)} \rrangle \text{ for all }w \in I_2(n),\, \gamma \in H\right\}
\]
with (graded) $I_2(n)$-module structure $(w\cdot \spl{\rho})(v) = w(\spl{\rho}(w^{-1}v))$ and (graded) $\poly$-module structure given by multiplication.

The module of splines is of particular interest in geometric combinatorics for its connections to chromatic symmetric functions and certain subvarieties of the flag variety. Indeed, if $W$ is a Weyl group and $H$ a Hessenberg space, then the corresponding module of splines is isomorphic to the $T$-equivariant cohomology of the associated \emph{regular semisimple Hessenberg variety}. The connection between this combinatorial module and this geometric object is called \emph{GKM theory} \cite{GKM_theory}. In this Weyl group case, the ring structure is the cohomology ring structure, the module structure is over the equivariant cohomology of a point, and the $W$-module structure is equivalent to the \emph{monodromy action} \cite{BalibanuCrooks_sheaves,brosnan_chow_dotactn_is_chromsym}.

Now the dihedral group $I_2(n)$ is not \emph{crystallographic} in general, and thus does not have an associated flag variety, let alone Hessenberg varieties. As such, our methods are entirely algebraic. In spite of this, the module $\Ms$ displays remarkable similarities to its geometric counterpart. We prove for example that $\Ms$ is a free module over the polynomial ring whose Hilbert series is given by an inversion-type statistic on $I_2(n)$, precisely as it is in the geometric case.
\begin{introtheorem}\label{intthm:poincare_hilbert}
    The module $\Ms$ is a free module over $\poly$. The rank generating function for $\Ms$ is palindromic and given by \[P_H(q) = \sum_{w \in I_2(n)} q^{\hlen(w)},\]
    where $\hlen(w)$ is the number of roots in $H$ sent to $\Phi^-$ by $w$.    Furthermore, the Hilbert series of $\Ms$ is 
    \[\Hilb(\Ms;q) = \frac{P_H(q)}{(1-q)^2}.\]
\end{introtheorem}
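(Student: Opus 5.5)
I will prove freeness by exhibiting an explicit ``flow-up'' basis, in the style of Gilbert--Polster--Tymoczko and of Goresky--Kottwitz--MacPherson for Schubert classes. Fix a total order on $I_2(n)$ refining Bruhat order, i.e.\ by Coxeter length $\ell(w)\in\{0,\dots,n\}$. For each $w$ I will construct a homogeneous spline $\spl{\rho}_w\in\Ms$ of degree $\hlen(w)$. It should be supported on elements $v$ with $v\geq w$ in the chosen order, and satisfy
\[
\spl{\rho}_w(w)=\prod_{\gamma\in H,\; w(\gamma)\in\Phi^-} t_{w(\gamma)} .
\]
Call this product $p_w$; it is, up to sign, the product of the edge labels $t_{w(\gamma)}$ over edges $w\to ws_\gamma$ with $\gamma\in H$ that point ``downward.''

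The standard triangularity argument then shows that $\{\spl{\rho}_w\}$ is a $\poly$-basis. Take any $\spl{\rho}\in\Ms$ and let $w$ be minimal in $\supp(\spl{\rho})$. Each $\gamma\in H$ with $w(\gamma)\in\Phi^-$ gives a neighbor $ws_\gamma$ below $w$, where $\spl{\rho}$ vanishes. So $t_{w(\gamma)}$ divides $\spl{\rho}(w)$ for each such $\gamma$. These linear forms are pairwise non-proportional, since distinct positive roots give distinct lines in $\C^2$, so their product $p_w$ divides $\spl{\rho}(w)$. Subtracting $(\spl{\rho}(w)/p_w)\,\spl{\rho}_w$ strictly pushes the support up, and induction finishes spanning. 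Linear independence is immediate from the upper-triangular form with nonzero diagonal entries.

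With the basis in hand, the rank generating function is $\sum_w q^{\hlen(w)}$. The Hilbert series is $P_H(q)/(1-q)^2$, because $\Hilb(\poly;q)=1/(1-q)^2$. For palindromicity I will use the involution $w\mapsto ww_0$. Since $w_0$ sends $\Phi^+$ to $\Phi^-$ and $H$ is stable under $-w_0$, we get $\hlen(ww_0)=|H|-\hlen(w)$. The stability of $H$ needs checking: in $I_2(n)$, $-w_0$ either fixes or swaps the two simple roots and reverses or preserves the root poset accordingly. Lower ideals containing the simple roots are exactly the roots of height at most $k$, which are symmetric under this map.

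The main obstacle is the construction of $\spl{\rho}_w$, since the dihedral group lacks the geometry that produces equivariant Schubert-type classes. My plan is to make it concrete. Elements of $I_2(n)$ are rotations and reflections. A positive root $\gamma$ of height $i$ in $H=\{\text{height}\le k\}$ connects $w$ to $ws_\gamma$, and $ws_\gamma$ has opposite ``parity'' with $\ell(ws_\gamma)=\ell(w)\pm(2i-1)$ roughly. I would define $\spl{\rho}_w(v)$ for $v$ above $w$ as an explicit product of linear forms $t_\beta$: specifically, the product over those $\beta$ in the set $\{w(\gamma):\gamma\in H,\ w(\gamma)\in\Phi^-\}$, transported suitably to $v$.

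I would then verify the edge conditions case by case. The cases split by whether $v$ and $v s_\gamma$ both lie in the support, and by the dependence on the relative lengths $\ell(w),\ell(v)$ and $k$. The key tool is that $f(v)-f(vs_\gamma)\in\langle t_{v(\gamma)}\rangle$ holds whenever both values are products sharing all factors except ones congruent modulo $t_{v(\gamma)}$. If a closed form proves unwieldy, I would fall back on a dimension count. I would show $\dim_\C(\Ms)_d$ is at most the coefficient predicted by $P_H(q)/(1-q)^2$, using the localization exact sequences filtering by length. I would then show that flow-up classes exist degree by degree, using that the obstruction to extending a value at $w$ lies in an ideal generated by at most two coprime linear forms in $\poly$, where the Chinese-remainder-type argument works.
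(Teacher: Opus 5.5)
\textbf{The gap: existence of the flow-up splines.} Your framework matches the paper's. A homogeneous family $\{\spl{\rho}_w\}$ that is upper triangular in Bruhat order, with $\spl{\rho}_w(w)$ a unit multiple of $\prod_{\gamma\in\hinv(w)}t_{w(\gamma)}$, gives freeness, the rank generating function and the Hilbert series by exactly your elimination argument (Lemma~\ref{lem:upper_triangular}). But that is the easy half. The actual content, the \emph{existence} of such a spline of degree $\hlen(w)$ at every $w$, is missing.

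``The product over $\{w(\gamma)\}$, transported suitably to $v$'' is not a definition, and it does not describe what works. The flow-up splines the paper builds are often supported on a path segment $G_w$ of the $2n$-cycle that is much smaller than the upper ideal. For $H=\{\alpha_1,\alpha_2\}$ and $w\neq e,w_0$, it is just the edge $\{w,ws_j\}$ with $s_j$ the right ascent. On $G_w$ the value at $v$ is $c^w(v)\prod_{\alpha\in H,\,vs_\alpha\notin G_w}t_{v(\alpha)}$. This is a product over the edges leaving $G_w$ at $v$, not a transport of the labels at $w$, and the scalars $c^w(v)\neq 0$ have to be solved for from the edge conditions.

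Your fallback does not close the gap. The upper bound on $\dim_\C(\Ms)_d$ comes for free from the same triangular filtration; the lower bound is the existence problem again. If you build a spline upward, the value at $v$ must satisfy congruences modulo the labels of all $\hlen(v)$ lower neighbors, not two. Since $\langle t_\beta,t_{\beta'}\rangle=\langle x,y\rangle\neq\poly$, there is no Chinese remainder theorem. For $m$ pairwise non-proportional linear forms, arbitrary degree-$d$ data is solvable only when $d\geq m-1$. Otherwise there are compatibility conditions, and verifying them is exactly the work still to be done.

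The paper avoids this with a downward recursion from $\spl{\rho}_{w_0}$ (Theorem~\ref{thm:flowup}), using three tools:
\begin{enumerate}
\item the generalized divided difference $\partial_\alpha$ of Lemma~\ref{lem:generalized_divided_difference}, which is valid only when the $s_\alpha$-edge lies in $\ce_H$ at every vertex of the support;
\item the transport $s_\alpha\cdot\spl{\rho}_{s_\alpha w}$ when $(w,s_\alpha w)\notin\ce_H$, which leaves $\hinv$ unchanged;
\item Schubert splines when $(w,s_\alpha w)\in\ce_H$ and $2\ell(w)+1\leq n$.
\end{enumerate}
Lemma~\ref{lem:simple_degree_changes} controls the degrees and edge memberships throughout.

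\textbf{Palindromicity.} This step is wrong as stated. The involution $w\mapsto ww_0$ reverses $\hlen$ only if $-w_0(H)=H$. Your justification, that Hessenberg spaces are exactly the roots of height at most $k$, is false. The proper odd-size ones, $\{\alpha_1,\ldots,\alpha_{2k+1}\}$ and $\{\alpha_1,\ldots,\alpha_{2k},\alpha_{2k+2}\}$, contain only one of the two roots of height $k+1$. For $n$ odd, $-w_0$ swaps $\alpha_{2k+1}$ and $\alpha_{2k+2}$, so these sets are not stable.

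Concretely, take $n=5$, $H=\{\alpha_1,\alpha_2,\alpha_3\}$ and $w=s_1s_2s_1$. Then $ww_0=s_2s_1$, and both elements have $H$-length $2\neq |H|-2$. Use $w\mapsto w_0w$ instead. Because $w_0(\Phi^+)=\Phi^-$, one gets $\hinv(w_0w)=H\setminus\hinv(w)$ for every $H$, with no stability hypothesis; this is the paper's argument.
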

Theorem \ref{intthm:poincare_hilbert} is Corollary \ref{cor:rank_gf} below.

The freeness of this module in the case that $W$ is a Weyl group is intrinsic to the geometry, and the Hilbert series are given by a Betti number computation. Moreover, this Betti number calculation is given by an \emph{affine paving} of the associated Hessenberg variety \cite{Precup_paving, tymoczko2006linear}. The associated GKM classes to these pavings are largely unknown. It is an open question in all Weyl types to give explicit cohomology classes that correspond to this paving. While these classes can be described geometrically \cite{BialynickiBirula1973,BialynickiBirula1976,ChoHongLee_Bases}, explicit formulas for their localizations, i.e. for the corresponding splines, are known only in special cases \cite{chohonglee_second_cohom,lesnevich2025signedpermsdot}.  We give an explicit homogeneous module basis for $\Ms$ that satisfies precisely the properties that would follow from a paving.

\begin{introtheorem}\label{intthm:basis}
    There exists a homogeneous basis
    $\{\spl{\rho}_w \mid w \in I_2(n)\}$ of $\Ms$, with
    $\deg \spl{\rho}_w = \hlen(w)$, such that each
    $\spl{\rho}_w$ is either a Schubert spline or has the form
    \begin{equation*}
        \spl{\rho}_w(v) =
        \begin{dcases*}
            c^w(v)
            \prod_{\substack{\alpha \in H, \\
                            vs_\alpha \notin G_w}}
            t_{v(\alpha)}, & if $v \in G_w$,\\[6pt]
            0, & if $v \notin G_w$,
        \end{dcases*}
    \end{equation*}
    where $G_w$ contains $w$ and is contained in its Bruhat
    upper ideal, and $c^w(v) \in \C^\times$ for every
    $v \in G_w$.
\end{introtheorem}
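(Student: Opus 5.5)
We plan to build the basis by hand, degree by degree, and then certify it with the standard upper-triangularity criterion for splines. Fix the Bruhat order on $I_2(n)$: elements of length $k$ are $u_k=(s_1s_2\cdots)$ and $u_k'=(s_2s_1\cdots)$ of length $k$, with $u_n=u_n'=w_0$. Write $\Phi^+=\{\gamma_1,\dots,\gamma_n\}$ in the cyclic order between the two simple roots $\gamma_1=\alpha_1$ and $\gamma_n=\alpha_2$. A lower order ideal containing both simple roots is then determined by an integer $m$: $H$ consists of the $\gamma_i$ with $i\le m$ or $i\ge n+1-m$, or $H=\Phi^+$. For each $w$ we compute $\hlen(w)$ directly. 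The roots sent negative by $w$ form an initial or terminal cyclic interval of $\Phi^+$ of size $\ell(w)$, so $\hlen(w)=|H\cap \mathrm{Inv}(w)|$ is explicit.

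\textbf{Key criterion.} The usual argument works for any GKM-type spline module over a field-graded polynomial ring, so we will establish it first for $\Ms$. Suppose $\{\spl{\rho}_w\}$ are homogeneous splines with $\deg\spl{\rho}_w=\hlen(w)$, each $\spl{\rho}_w$ is supported on the Bruhat upper ideal of $w$, and
\[
\spl{\rho}_w(w)=c\prod_{\alpha\in H,\ w(\alpha)\in\Phi^-}t_{w(\alpha)},\qquad c\in\C^\times .
\]
Then $\{\spl{\rho}_w\}$ is a $\poly$-basis of $\Ms$. The proof is by induction on support. Take a spline $\spl{\rho}$ and a Bruhat-minimal $w$ in its support. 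For every $\alpha\in H$ with $ws_\alpha<w$, we have $\spl{\rho}(ws_\alpha)=0$, so $t_{w(\alpha)}$ divides $\spl{\rho}(w)$. These linear forms are pairwise non-proportional, and their number is $\hlen(w)$, because $ws_\alpha<w$ iff $w(\alpha)\in\Phi^-$. Hence $\spl{\rho}(w)$ is a multiple of $\spl{\rho}_w(w)$, and we subtract. Linear independence follows from the same triangularity.

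\textbf{Construction.} For each $w$ we either take the Schubert spline, or define $\spl{\rho}_w$ by the displayed formula for a suitable flow-up set $G_w$ and constants $c^w(v)$. The Schubert splines for $I_2(n)$ in the full-$H$ case are known (localization at $v\ge w$ is a product formula in the $t_\gamma$). They remain splines for smaller $H$, since fewer conditions are imposed. The catch is that their degree is $\ell(w)$, not $\hlen(w)$, so they are used only where $\hlen(w)=\ell(w)$, namely for short $w$. For the remaining $w$ we choose $G_w$ to be an interval-like set such as $\{v\ge w\}$ truncated to an $H$-connected region, or a two-sided chain $\{u_k,u'_k: k\ge \ell(w)\}$, and take $c^w(v)$ to be signs or ratios.

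\textbf{Verification.} For such a spline, the GKM condition on an edge $v\to vs_\alpha$ ($\alpha\in H$) is automatic when exactly one endpoint lies in $G_w$, since the product then contains $t_{v(\alpha)}$. When both endpoints lie in $G_w$, we need
\[
c^w(v)\prod t_{v(\beta)}\equiv c^w(vs_\alpha)\prod t_{vs_\alpha(\beta)} \pmod{t_{v(\alpha)}} .
\]
Over $\C$ modulo a linear form, each $t_\gamma$ reduces to a multiple of one fixed linear form, so both sides are scalar multiples of $z^d$ for the same $d$. This determines the ratio $c^w(vs_\alpha)/c^w(v)$ uniquely. The main obstacle is consistency: the resulting ratios must multiply to $1$ around each cycle of the $H$-graph restricted to $G_w$. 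We will handle this by exploiting the rotational symmetry. Reflections compose to rotations, and $v\mapsto vs_\alpha$ acts on roots by rotation, so the products $\prod t_{v(\beta)}$ over the $G_w$-complement correspond under rotation of the plane. That lets us choose $c^w(v)$ to be a power of a primitive $n$-th (or $2n$-th) root of unity depending only on the length and parity of $v$, in which case the cycle condition reduces to an identity of the form $\zeta^{n}=1$. If consistency fails for a given $G_w$, we shrink $G_w$ until the degree still equals $\hlen(w)$. Finally, $\deg\spl{\rho}_w(w)=\#\{\alpha\in H: ws_\alpha\notin G_w\}=\hlen(w)$ once $G_w$ contains no element below $w$ adjacent via $H$. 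The criterion then yields both the basis and Theorem~\ref{intthm:poincare_hilbert}.
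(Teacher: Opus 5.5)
Your triangularity criterion is correct (it is Lemma~\ref{lem:upper_triangular}), and using Schubert splines whenever $\hlen(w)=\ell(w)$ is fine. But the heart of the statement, the \emph{existence} of the non-Schubert basis elements, is never established.

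First, you never actually specify $G_w$. The ``two-sided chain'' $\{u_k,u_k' : k\ge\ell(w)\}$ contains the other element of length $\ell(w)$, which is not above $w$, so it is not admissible. ``The upper ideal truncated to an $H$-connected region'' is not a definition.

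Second, your final degree count has the condition backwards. Since $G_w$ lies in the upper ideal, $\#\{\alpha\in H : ws_\alpha\notin G_w\}=\hlen(w)+\#\{\alpha\in H : ws_\alpha>w,\ ws_\alpha\notin G_w\}$. So what is needed is that $G_w$ contains every upward $H$-neighbor of $w$, not that it avoids downward ones.

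Third, and more importantly, the number of factors $\#\{\alpha\in H : vs_\alpha\notin G_w\}$ must be the same at every $v\in G_w$. In other words, the subgraph of $\cg_H$ induced on $G_w$ must be regular. Otherwise the function is not homogeneous. Moreover, along an internal edge where the counts differ, your congruence modulo $t_{v(\alpha)}$ compares nonzero multiples of different powers of $z$, which cannot agree. You never address this. In the paper the supports are runs $\{u,us_i,us_is_j,\dots\}$ of $2t$ consecutive vertices of the $2n$-cycle whose end-to-end reflection has its root in $H$. Lemma~\ref{lem:regular} shows the induced graph is then complete bipartite $K_{t,t}$.

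Fourth, even granting a good $G_w$, consistency of the edge-determined ratios around the many cycles of $K_{t,t}$ is exactly what must be proved, and your argument is only a heuristic. Passing from $v$ to $vs_\alpha$ changes the roots $v(\beta)$ by the reflection $s_{v(\alpha)}$, not by a rotation. The reduction to ``$\zeta^n=1$'' is asserted without computation. ``Shrink $G_w$ if consistency fails'' changes the factor count and is not a construction.

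The paper avoids the consistency problem altogether: it produces the splines first and reads off their shape afterwards. Existence comes from operations that visibly preserve $\Ms$ (Theorem~\ref{thm:flowup}). Starting from the point-supported top spline $\spl{\rho}_{w_0}$, it uses:
\begin{itemize}
\item divided differences $\partial_\alpha$ when $(w,s_\alpha w)\in\ce_H$ and $2\ell(w)+1>n$; these are legitimate by Lemma~\ref{lem:generalized_divided_difference} because every support vertex has its $s_\alpha$-edge in $\cg_H$;
\item the dot action $s_\alpha\cdot\spl{\rho}_{s_\alpha w}$ when $(w,s_\alpha w)\notin\ce_H$;
\item Schubert splines when $(w,s_\alpha w)\in\ce_H$ and $2\ell(w)+1\le n$.
\end{itemize}
The product form with nonzero constants is then forced by Corollary~\ref{cor:rationally_smooth}. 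Regularity gives the right number of linear factors. Divisibility plus degree gives the formula up to a scalar at each vertex. Connectivity of $G_w$ propagates nonvanishing of that scalar from $w$ to every vertex, and left translation preserves the form. To salvage your route you would need either such an existence mechanism or a genuine verification of the cycle condition on $K_{t,t}$.

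Separately, your parametrization of Hessenberg spaces yields only the symmetric even-size ideals $\{\alpha_1,\dots,\alpha_{2m}\}$. The odd-size ones, $\{\alpha_1,\dots,\alpha_{2m+1}\}$ and $\{\alpha_1,\dots,\alpha_{2m},\alpha_{2m+2}\}$, are missing.
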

Theorem \ref{thm:flowup} constructs this basis recursively,
Proposition \ref{prop:flowup_explicit_setup} describes its supports,
and Corollary \ref{cor:rationally_smooth} gives the displayed
localization formula. The Schubert splines satisfy an analogous
product formula in the full reflection graph, given in
Theorem \ref{thm:schubert_rational}.

Finally, we completely determine the $I_2(n)$-module structure of $\Ms$. Specifically, we compute the (graded) left and right \emph{dot action representations}
\[\lqot{H} = \bigoplus_{i=0}^{\abs{H}}\, (\lqot{H})_i \qquad \qquad \rqot{H} = \bigoplus_{i=0}^{\abs{H}}\, (\rqot{H})_i,\]
each derived from an equivariant quotient of $\Ms$. Determining this structure for Weyl groups is a subject of significant research. For the symmetric group, the left and right quotients correspond to passing from equivariant to ordinary cohomology of the regular semisimple Hessenberg variety and its twin, respectively. There, the left representation is, under the Frobenius characteristic map and up to the standard involution $\omega$ on symmetric functions, a \emph{chromatic quasisymmetric function} \cite{brosnan_chow_dotactn_is_chromsym,guaypaquet2016shar_wachs_conj,SW2016chromaticquasisymmetric}, and the right representation is similarly a \emph{unicellular LLT polynomial} \cite{ALEXANDERSSON2018LLTchromsym,Ayzenberg2018isospectral, guaypaquet2016shar_wachs_conj}. For $W = S_n$,  the decomposition of $\lqot{H}$ into irreducibles is known \cite{SW2016chromaticquasisymmetric}. It is a major open problem to determine whether these graded representations are permutation representations of a particular form, i.e.~to prove or disprove the \emph{graded Stanley--Stembridge Conjecture} \cite{brosnan_chow_dotactn_is_chromsym,guaypaquet2016shar_wachs_conj, SW2016chromaticquasisymmetric, STANLEY1995chromsym}, whose ungraded statement was proven by Hikita \cite{hikita2024SSconjecture} and later reproven by Griffin, et al.~\cite{Griffin_Mellit_et_al}. A proof of the graded statement has recently been claimed by OpenAI \cite{OpenAI2026ChromaticPositivity}. 

In other Weyl types, in particular the classical types B/C/D, the $W$-module structure has received recent attention for its deep connections to Hecke algebras and other geometric and representation-theoretic objects \cite{Abreu2025heckealg, BalibanuCrooks_sheaves, precup2022perverse,skandera2025hyperoctahedral,}. In these cases, there are no combinatorial formulae that compute the left or right representations in full, and it is an active area of research to find ones \cite{BalibanuCrooks_sheaves,lesnevich2025signedpermsdot, precup2022perverse}. It is also unknown whether these left representations are permutation representations of a particular type \cite{lesnevich2025signedpermsdot}. We settle each of these questions in their entirety for $I_2(n)$.
\begin{introtheorem}\label{intthm:dotaction}
    Let $H \subsetneq \Phi^+$ be a Hessenberg space containing the simple roots $\{\alpha,\beta\}$ and let $h = \lfloor \abs{H} / 2 \rfloor$. If $\abs{H}$ is odd, let $\alpha$ be the $H$-dominant simple root. Then, as an element of the graded representation ring of $I_2(n)$,
    \begin{equation*}
        \lqot{H}  \cong
        \begin{dcases*}
            (1+q)(1+q+\cdots+q^{2h-1})\one
            \oplus
            q^h\left(C_{\alpha} \oplus C_\beta - \mathsf{P}_h - \mathsf{P}_{h-1}\right)
                & if $\abs{H}=2h$,\\[10pt]
            (1+q)(1+q+\cdots+q^{2h})\one
            \oplus
            (q^h+q^{h+1})\left(C_\alpha - \mathsf{P}_h\right)
                & if $\abs{H}=2h+1$
        \end{dcases*}
    \end{equation*}
    and
    \[
    \rqot{H}  \cong
    \begin{cases}
        \ds
        \mathsf{Co}_H
        \oplus
        q^h\left(C_{\alpha} \oplus C_\beta - \mathsf{P}_h - \mathsf{P}_{h-1}\right)
            &\text{if } \abs{H}=2h,\\[10pt]
        \ds
        \mathsf{Co}_H
        \oplus
        q^h\left(C_\alpha - \mathsf{P}_h\right)
        \oplus 
        q^{h+1}\left(C_\beta - \mathsf{P}_h\right)
            &\text{if } \abs{H}=2h+1.
    \end{cases}
    \]
\end{introtheorem}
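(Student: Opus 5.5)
To compute $\lqot{H}$ and $\rqot{H}$, I would work degree by degree in the graded representation ring. The input is the free basis $\{\spl{\rho}_w\}$ of Theorem \ref{intthm:basis} together with the Hilbert series of Theorem \ref{intthm:poincare_hilbert}.

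\textbf{Recalling the two quotients.} As I understand them (this should match the paper's later definitions):
\begin{itemize}
\item $\lqot{H} = \Ms/\langle x,y\rangle \Ms$, where $x,y$ act by multiplication.
\item $\rqot{H}$ is the quotient by the ideal generated by the constant splines $w\mapsto w(f)$, for $f$ of positive degree.
\end{itemize}
Both are graded $I_2(n)$-modules, and both have total dimension $2n$ with graded dimension $P_H(q)$. This follows from freeness for $\lqot{H}$; for $\rqot{H}$ I would use the standard argument that $\Ms$ is also free over the image of $\poly$ under the twisted embedding.

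\textbf{Locating the trivial and non-trivial parts.}
\begin{itemize}
\item In every degree $i<h$, I would show that $\Ms$ is generated by products of "Schubert-type" classes that are $I_2(n)$-invariant modulo lower terms, so that $(\lqot{H})_i$ is a multiple of $\one$. The low-degree and high-degree parts are governed by the palindromic factor $(1+q)(1+q+\cdots)$.
\item The non-trivial representations must sit in the middle degrees $h$ and $h+1$. These are exactly the degrees where the rank generating function has a "bump" coming from the reflections $s_\gamma$ whose inversion count $\hlen$ ties.
\item The middle-degree classes are the flow-up splines $\spl{\rho}_w$ of Proposition \ref{prop:flowup_explicit_setup} with nonzero constants $c^w(v)$. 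I would identify their spans, modulo lower-degree multiples, with the induced representations $C_\alpha = \mathrm{Ind}_{\langle s_\alpha\rangle}^{I_2(n)}\one$. The correction terms $\mathsf{P}_h$ and $\mathsf{P}_{h-1}$ would come from the degree-$h$ (and $h-1$) polynomial part that gets killed in the quotient.
\end{itemize}

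\textbf{Computing characters.} I would evaluate the characters directly, one group element $g$ at a time.
\begin{itemize}
\item A rotation $r^k \neq e$ has no fixed points on $I_2(n)$. Its trace on the degree-$i$ piece of $\Ms$ equals its trace on the polynomial ring, shifted appropriately. This lets me use Molien-type bookkeeping: $\Hilb(\Ms)$ computed equivariantly equals $\bigl(\sum_i q^i \chi_{(\lqot{H})_i}(g)\bigr)/\det(1-qg)$.
\item For a reflection $s$, I would localize at the fixed-point data of $s$ on the reflection graph. The action permutes the vertices with no fixed points, but it interacts with the $H$-edges $\{v, vs_\gamma\}$, and these produce the $C_\alpha$ versus $C_\beta$ asymmetry.
\end{itemize}
The parity split on $\abs{H}$ then arises as follows. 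When $\abs{H}=2h+1$, the $H$-dominant simple root $\alpha$ breaks the symmetry, and the middle block occupies two degrees ($q^h+q^{h+1}$ times $C_\alpha-\mathsf{P}_h$).

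\textbf{Passing to the right quotient.} For $\rqot{H}$, I would redo the character computation with the roles of left and right multiplication swapped. The trivial part is replaced by the graded coinvariant-type piece $\mathsf{Co}_H$, and the middle terms are distributed asymmetrically: $C_\alpha$ appears in degree $h$ and $C_\beta$ in degree $h+1$.

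\textbf{Main obstacle.} The hardest step is the reflection character in the middle degrees. There one must show exactly which combinations of flow-up classes become equivariantly isomorphic to $C_\alpha - \mathsf{P}_h$. I would first attempt this by writing the $s_\alpha$-action in the flow-up basis explicitly via the product formula of Corollary \ref{cor:rationally_smooth}, and checking that it is triangular with computable diagonal signs.
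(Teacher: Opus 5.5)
\textbf{There is a genuine gap.} Your outline names the ingredients of the answer but never supplies an argument that produces them, and the step meant to do the real work, the character computation, is not available as stated.

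\emph{The localization step has nothing to localize at.} Left multiplication by any $g\neq e$, reflections included, fixes no vertex, so there is no ``fixed-point data'' to use. You also state no trace formula for splines that could replace it.

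\emph{The rotation-trace claim is unsupported, and the Molien identity is circular.} By equivariant freeness, $\chi_{\Ms}(g;q)=\chi_{\lqot{H}}(g;q)/\det(1-qV_1(g))$, so knowing $\chi_{\Ms}(g;q)$ is equivalent to knowing the answer. If $\chi_{\Ms}(g;q)$ were $P_H(q)$ times the polynomial-ring character, $g$ would act trivially on $\lqot{H}$. That already fails for $n=5$, $H=\{\alpha_1,\alpha_2,\alpha_3\}$, where $(\lqot{H})_1\cong\one^{\oplus 2}\oplus V_2$. So the identity gives nothing until you compute $\chi_{\Ms}(g;q)$ independently, which you do not.

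\emph{The fallback has no basis.} Nothing suggests the dot action is triangular in the flow-up basis: it moves supports by left translation, and left translation does not respect Bruhat order.

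\emph{Two structural points are asserted rather than proved.} First, palindromicity of $P_H(q)$ does not give $(\lqot{H})_{\abs{H}-d}\cong(\lqot{H})_d$ as representations. That needs an $I_2(n)$-invariant perfect pairing into the top degree, for example built from the flow-down splines $w_0\cdot\spl{\rho}_{w_0w}$. It also needs the top degree identified as $\one$ on the left but $\sgn$ on the right. Second, you never explain why the nontrivial constituents of $\lqot{H}$ occur only in degrees $h$ and $h+1$.

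\textbf{What pins down the middle degree in the paper.}
\begin{enumerate}
\item One has $(\Ms)_k=(\splines{\Phi^+})_k$ for $k<h$, and the Borel presentation shows $\lqot{\Phi^+}$ is trivial. Hence $(\lqot{H})_k\cong\one^{\oplus 2}$ for $0<k<h$, and $(\lqot{H})_h$ contains $\one^{\oplus 2}$. This is the precise form of your ``Schubert-type classes'' step.
\item For $w=w_0z$ with $\hlen(w)=h$, the spline $\spl{\rho}_w=\partial_\alpha\spl{\rho}_{s_\alpha w}$ has support exactly the Bruhat upper set of $w$ and stabilizer $\langle s_\alpha\rangle$. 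Proposition~\ref{prop:linear_independence} and Corollary~\ref{cor:admissible_block_independence} make its orbit linearly independent. So $C_\alpha$ embeds in $(\Ms)_h$, together with $C_\beta$ when $\abs{H}$ is even.
\item Equivariant freeness identifies $[(J\Ms)_h]$ with $[\poly_h]+2\sum_{d=1}^{h-1}[\poly_d]$. This contains no $V_j$ with $j>h$, no $\eta_i$, and only one $V_h$. Those constituents of the embedded permutation representations must therefore survive in the quotient, and a dimension count finishes.
\end{enumerate}

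\textbf{For $\rqot{H}$, nothing is ``swapped''.} The dot action is the same; only the $\poly$-module structure changes. The splines $w\mapsto w(f)$ are not constant, but they are dot-invariant, and that is what gives $(\Ms)_d\cong\bigoplus_{j=0}^{d}(\rqot{H})_{d-j}^{\oplus j+1}$. Comparing the two freeness decompositions at degree $h$ gives $(\lqot{H})_h\oplus V_h\cong(\rqot{H})_h\oplus\one^{\oplus 2}$. The $\sgn$-twisted duality then produces degree $h+1$. The relation $\eta_\alpha\otimes\sgn\cong\eta_\beta$ is exactly why $C_\beta$ rather than $C_\alpha$ appears there.
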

The definition of an $H$-dominant simple root is Definition~\ref{def:H_dominant}. The irreducible representations $\one$ and $V_i$ as well as the permutation representations $C_\gamma$ are defined in Subsection~\ref{ssec:representations}, whereas $\mathsf{P}_h$ is defined in Section~\ref{sec:left_rep} and $\mathsf{Co}_H$ is defined in Lemma~\ref{lem:coinv_right_subreps}. Theorem~\ref{intthm:dotaction} is Theorems~\ref{thm:left_representation} and~\ref{thm:right_representation} below. We note that the left representation is not necessarily a permutation representation in Corollary~\ref{cor:not_permutation}.

This paper is structured as follows. Section~\ref{sec:background} provides the necessary background. Section~\ref{sec:schubert} treats the case where $H$ is the set of positive roots. This is the \emph{Schubert case}, corresponding to the Schubert calculus of flag varieties. Much of the content of this section is known in greater generality~\cite{MR3391071,KajiEquivariantSchubertCoxeter, Kumar_KacMoody}, but our treatment is entirely algebraic and from first principles. Section~\ref{sec:flowup} gives an iterative construction for the bases from Theorem~\ref{intthm:basis}. Section~\ref{sec:properties} computes the properties of this basis, both completing the description of Theorem~\ref{intthm:basis} and setting up the necessary calculations for the following sections. Finally, Section~\ref{sec:left_rep} computes the left dot action representation and Section~\ref{sec:right_rep} computes the right dot action representation, which together give Theorem~\ref{intthm:dotaction}.

\subsection*{Acknowledgments}
This collaboration began at the Graduate Research Workshop in Combinatorics (GRWC) 2025. The workshop was supported in part by NSF DMS-2152490, Barbara Jansons Professorship, the Combinatorics Foundation, and Iowa State University. The authors are grateful for the opportunity to collaborate despite coming from different areas of combinatorics, which would not have materialized without the environment and support provided by the GRWC. We also thank Sheila Sundaram for fruitful discussions and for helping to accept this project into the GRWC. JB thanks NSERC for their support through grant PDF 587781. JB remercie le CRSNG pour son soutien dans le cadre de la bourse PDF 587781. The authors used GPT Sol 6.1 and Claude Opus 5.5 to assist with a
Lean~4~\cite{deMouraUllrich2021} formalization of all results proved
in this paper~\cite{HodgesLean2026}, using the Mathlib
library~\cite{mathlib2020}. The authors wrote the arguments presented in this paper
and take full responsibility for its mathematical content.


\section{Background}\label{sec:background}
\subsection{Vector Algebra}
Fix a complex vector space $V \cong \C^n$ with symmetric bilinear form $\langle\cdot,\cdot\rangle$ given by the bilinear extension of the standard inner product on $\R^n$. If $\alpha \in V$ satisfies $\langle\alpha,\alpha\rangle\neq 0$, then the reflection $s_\alpha$ across its orthogonal hyperplane is given by
\[s_\alpha(v) = v - \frac{2\llangle v,\alpha\rrangle}{\llangle \alpha,\alpha \rrangle} \alpha. \]
Let $e_1,\ldots,e_n$ be the standard basis for $V$. There is a natural map from $V$ to the linear polynomials in $\C[t_1,\ldots,t_n]$ defined as the linear extension of $e_i \mapsto t_i$. For a particular vector $\alpha=\sum c_ie_i \in V$, we denote by $t_\alpha = \sum c_it_i \in \C[t_1,\ldots,t_n]$ the linear form associated to it by this map. Via this identification, any linear map $X$ on $V$ may be extended to the polynomial ring $\C[t_1,\ldots,t_n]$ by acting on each variable individually: $X$ maps the monomial $\prod_i t_i^{m_i}$ to $\prod_i X(t_i)^{m_i}$, and extends linearly.

\begin{example}
    If $V = \C^2$ and $\ds X = \begin{bmatrix*}[r]
        -1 & 1 \\ 0 & 2
    \end{bmatrix*}$, then 
    \begin{align*}
        X(t_1) &= -t_1     &&\text{since }X(e_1) = -e_1,\\
        X(t_2) &= t_1+2t_2 &&\text{since }X(e_2) = e_1+2e_2.
    \end{align*}
    This action extends multiplicatively to higher degree monomials; for example $X(t_1t_2^2) = X(t_1)X(t_2)^2 = (-t_1)(t_1+2t_2)^2$. It also extends linearly to polynomials; for example, $X(t_1+t_2)=X(t_1)+X(t_2)=2t_2$.
\end{example}
\begin{lemma}\label{lem:reflect_divisible}
Let $s_\alpha$ be a reflection on $\C^n$. If $f$ is any polynomial in $\C[t_1,\ldots,t_n]$, then $f-s_\alpha(f)$ is divisible by $t_\alpha$.
\end{lemma}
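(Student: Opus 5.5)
The plan is to reduce to monomials and then use a telescoping identity. The map $f \mapsto f - s_\alpha(f)$ is $\C$-linear, and the set of polynomials divisible by $t_\alpha$ is an ideal, hence closed under sums and scalar multiples. So it suffices to prove the claim for monomials $f = \prod_i t_i^{m_i}$, or more generally for products of linear forms.

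First I will check the degree-one case. For a linear form $t_v$ with $v \in V$, the extension of $s_\alpha$ to polynomials acts on linear forms through the identification $e_i \mapsto t_i$. By linearity, $s_\alpha(t_v) = t_{s_\alpha(v)}$. Hence
\[ t_v - s_\alpha(t_v) = t_{v - s_\alpha(v)} = \frac{2\llangle v,\alpha\rrangle}{\llangle \alpha,\alpha\rrangle}\, t_\alpha , \]
which is a scalar multiple of $t_\alpha$. For constants the difference is $0$.

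Next I will pass to products. The extended map $s_\alpha$ is a ring homomorphism, since it is defined multiplicatively on monomials and extended linearly. For $f = g h$ we can then write
\[ gh - s_\alpha(g)s_\alpha(h) = \bigl(g - s_\alpha(g)\bigr)h + s_\alpha(g)\bigl(h - s_\alpha(h)\bigr). \]
By induction on degree, both terms lie in the ideal $\langle t_\alpha\rangle$, starting from the linear case above. Applying this to monomials, each of which is a product of variables $t_i = t_{e_i}$, finishes the proof.

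The only point needing care, and the main obstacle, is justifying that the extension of $s_\alpha$ is a genuine ring endomorphism compatible with $t_v \mapsto t_{s_\alpha(v)}$. This is the standard substitution homomorphism $t_i \mapsto X(t_i)$, which is exactly how the paper defines the extension, so it should be routine.
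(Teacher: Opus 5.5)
Your proof is correct, and it takes a different route from the paper. The paper argues geometrically. It first checks that $(Xf)(v)=f(X^Tv)$ for any linear map $X$. It then uses that $s_\alpha$ is a symmetric matrix, so $s_\alpha(f)$ agrees with $f$ on the mirror $\mathcal{V}(t_\alpha)$. Finally, irreducibility of $t_\alpha$ and the Nullstellensatz give $t_\alpha \mid f-s_\alpha(f)$.

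You instead work purely algebraically. The explicit linear identity $t_v-s_\alpha(t_v)=\frac{2\langle v,\alpha\rangle}{\langle\alpha,\alpha\rangle}\,t_\alpha$ handles degree one, and the twisted Leibniz identity carries divisibility through products by induction on degree.

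What the paper's approach buys is brevity and a conceptual reason for the result: the difference vanishes on the reflecting hyperplane. What yours buys:
\begin{itemize}
\item It avoids the transpose and symmetry bookkeeping entirely.
\item It needs no Nullstellensatz or algebraic closure; it works over any field in which $\langle\alpha,\alpha\rangle$ is invertible.
\item It is constructive. It exhibits the quotient $(f-s_\alpha f)/t_\alpha$ recursively, i.e.\ the divided difference $\partial_\alpha$ the paper relies on later, together with its rule $\partial_\alpha(gh)=\partial_\alpha(g)\,h+s_\alpha(g)\,\partial_\alpha(h)$.
\end{itemize}
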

\begin{proof}
    Consider the action of an arbitrary $X = [c_{ij}] \in \GL_n(\C)$ on $\C[t_1,\ldots,t_n]$. This action interacts with evaluation on $v \in \C^n$ via:
    \[Xf(v) = f(X^Tv).\]
    It suffices to verify this identity on the generators $t_j$, and certainly 
    \[Xt_j(v) = \sum_{i=1}^n c_{ij}t_i(v) = \sum_{i=1}^n c_{ij}v_i = t_j(X^Tv).\]
     Since reflections $s_\alpha$ are symmetric linear operators, the vanishing set of $f - s_\alpha(f)$ contains any point on the reflecting hyperplane $H_\alpha = \mathcal{V}(t_\alpha)$. Formally, if $v \in H_\alpha$ then 
     \[
     s_\alpha(f)(v) = f(s_\alpha ^T v) = f(s_\alpha v) = f(v).
     \]
     Since $t_\alpha$ is irreducible (it is linear), by nullstellensatz $f - s_\alpha(f)$ is a multiple of $t_\alpha$.
\end{proof}

The remainder of this work considers the dihedral group $I_2(n)$, as naturally extended from its typical $\R^2$ embeddings to $V = \C^2$, so the polynomial ring is $\C[x,y]$. We continue to use $t_\alpha$ to denote the linear polynomial corresponding to the vector $\alpha$.

\subsection{The Dihedral Group and Root Systems}
The \emph{dihedral group} 
\begin{equation*}
    I_2(n) 
        \deftobe
        \genby{
            s_1,s_2 
            \st 
            s_1^2 = s_2^2 = (s_1s_2)^n = e
        }
\end{equation*}
is the group of symmetries of a regular $n$-gon. We employ here the definition that gives $I_2(n)$ as the finite rank-two Coxeter group of order $2n$.

The reflections $s_1 \coloneqq s_{\alpha_1}$ and $s_2 \coloneqq s_{\alpha_2}$ form a minimal generating set. For each $w \in I_2(n)$, there is a corresponding set of  \textit{reduced words}, that is, minimal ways of expressing $w$ as a product of $s_1$ and $s_2$. The \emph{length} of an element $w$, denoted $\ell(w)$, is the length of any reduced word for $w$. Note that reduced words are alternating sequences of $s_1$ and $s_2$ since $s_1^2 = s_2^2 = e$. Every element of the dihedral group has only one reduced word except for the longest element~$w_0$, which is of length $n$ and can start with either $s_1$ or $s_2$ and alternate between them $n$ times, since $(s_1 s_2)^n = e$. The \emph{Bruhat order} $\leq$ on $I_2(n)$ is defined letting $w \leq v$ when $w$ appears as a subword of a reduced word for $v$.

The dihedral group is also an example of a \textit{real reflection group}. Such groups are generated by reflections $s_i = s_{\alpha_{i}}$ across particular hyperplanes defined by normal vectors $\alpha_i \in \R^2$, called \emph{roots}. We take all roots to be unit length.
A real reflection group corresponds to a \emph{real root system} $\Phi$, which is a finite set of real-valued vectors which satisfies the following:
\begin{itemize}
    \item if $\alpha \in \Phi$ then $-\alpha \in \Phi$,
    \item if $\alpha,\beta \in \Phi$ and $\alpha = c\beta$ then $c = \pm 1$, and
    \item if $\alpha,\beta \in \Phi$ then $s_\alpha(\beta) \in \Phi$.
\end{itemize}

The correspondence between real reflection groups and real root systems is that the set of reflections $\{s_\alpha\mid \alpha\in \Phi\}$ generates a group, which is a real reflection group. A generic hyperplane will partition $\Phi$ into ``positive'' roots $\Phi^+$ and ``negative'' roots $\Phi^-$, so that there is a one-to-one correspondence between $\Phi^+$ and the reflections. Moreover, the \emph{simple roots} are the unique pair of positive roots closest to the generic hyperplane, and are always the positive roots that correspond to the generators $s_1,s_2$. We write $\Delta$ for the set of simple roots.

We index the positive roots of $I_2(n)$ so that $s_1 = s_{\alpha_1}$, $s_2 = s_{\alpha_2}$, and more generally 
\[(s_1s_2)^ks_1 = s_{\alpha_{2k+1}} \qquad\text{and}\qquad (s_2s_1)^ks_2 = s_{\alpha_{2k+2}}\]
for $2k+1\leq n$ and $2k+2\leq n$, respectively. Our proofs will not rely on a particular embedding, but in examples we will often place $\alpha_{2k+1}$ at angle $k\pi/n$ and $\alpha_{2k+2}$ at angle $(n-1-k)\pi/n$.

\begin{example} \label{ex:reflection}
    Below, the red lines show reflection symmetries and the arrows show a choice of positive roots for $I_2(3)$, $I_2(4)$, and $I_2(5)$.
    \begin{center}
        \begin{tikzpicture}
            \begin{scope}[scale=0.9]
                \def\angle{360/3}
                \def\radius{2}
                
                \foreach \i in {0,1,2} {
                    \path (\i*\angle-30:\radius) coordinate (A\i);
                }
                \foreach \i in {0,1,2,3,4,5} {
                    \path (\i*\angle/2-30:\radius+0.4) coordinate (B\i);
                }
                \draw[thick] (A0)--(A1) -- (A2) -- cycle;
                \foreach \i in {0,1,2,3,4,5} {
                    \draw[dashed, red] (0,0) -- (B\i);
                }
                \foreach \i in {0,1,2} {
                    \draw[thick,->] (0,0) -- (\i*\angle/2:\radius+0.1);
                }
                \draw (0*\angle/2:\radius+0.5) node (Al0) {$\alpha_{1}$};     
                \draw (1*\angle/2:\radius+0.5) node (Al1) {$\alpha_{3}$};
                \draw (2*\angle/2:\radius+0.5) node (Al2) {$\alpha_{2}$};

            \end{scope}
            \begin{scope}[scale=1,xshift=2.3in]
                \def\angle{360/4}
                \def\radius{2}
                
                \foreach \i in {0,1,2,3} {
                    \path (\i*\angle-45:\radius) coordinate (A\i);
                }
                \foreach \i in {0,1,2,3,4,5,6,7} {
                    \path (\i*\angle/2-45:\radius+0.4) coordinate (B\i);
                }
                \draw[thick] (A0)--(A1) -- (A2) -- (A3) -- cycle;
                \foreach \i in {0,1,2,3,4,5,6,7} {
                    \draw[dashed, red] (0,0) -- (B\i);
                }
                \foreach \i in {0,1,2,3} {
                    \draw[thick,->] (0,0) -- (\i*\angle/2:\radius+0.1);
                }
                \draw (0*\angle/2:\radius+0.5) node (Al0) {$\alpha_{1}$};     
                \draw (1*\angle/2:\radius+0.5) node (Al1) {$\alpha_{3}$};
                \draw (2*\angle/2:\radius+0.5) node (Al2) {$\alpha_{4}$};
                \draw (3*\angle/2:\radius+0.5) node (Al3) {$\alpha_{2}$};

            \end{scope}
            \begin{scope}[scale=1,xshift=4.6in]
                \def\angle{360/5}
                \def\radius{2}
                
                \foreach \i in {0,1,2,3,4} {
                    \path (\i*\angle-18:\radius) coordinate (A\i);
                }
                \foreach \i in {0,1,2,3,4,5,6,7,8,9} {
                    \path (\i*\angle/2-18:\radius+0.4) coordinate (B\i);
                }
                \draw[thick] (A0)--(A1) -- (A2) -- (A3) -- (A4)-- cycle;
                \foreach \i in {0,1,2,3,4,5,6,7,8,9} {
                    \draw[dashed, red] (0,0) -- (B\i);
                }
                \foreach \i in {0,1,2,3,4} {
                    \draw[thick,->] (0,0) -- (\i*\angle/2:\radius+0.1);
                }
                \draw (0*\angle/2:\radius+0.5) node (Al0) {$\alpha_{1}$};   
                \draw (1*\angle/2:\radius+0.5) node (Al1) {$\alpha_{3}$};
                \draw (2*\angle/2:\radius+0.5) node (Al2) {$\alpha_{5}$};
                \draw (3*\angle/2:\radius+0.5) node (Al3) {$\alpha_{4}$};
                \draw (4*\angle/2:\radius+0.5) node (Al4) {$\alpha_{2}$};

            \end{scope}
            
        \end{tikzpicture}
    \end{center}
\end{example}

For $w\in I_2(n)$, let
\begin{equation}\label{eq:left_inversion_set}
    N_L(w)\deftobe
    \setof{\alpha\in\Phi^+ \st \ell(s_\alpha w)<\ell(w)}
\end{equation}
be the left inversion set of $w$. 
Similarly, let
\begin{equation}\label{eq:right_inversion_set}
    N_R(w)\deftobe
    \setof{\alpha\in\Phi^+ \st \ell(w s_\alpha)<\ell(w)}
\end{equation}
be the right inversion set of $w$.

\begin{lemma}\label{lem:rank_two_bruhat_facts}
The following hold for all $w \in I_2(n)$.

\begin{enumerate}
    \item If $w\notin\{e,w_0\}$, then $w$ has a unique simple left descent
    and a unique simple left ascent. That is, exactly one of
    $s_{\alpha_1}w,s_{\alpha_2}w$ has length $\ell(w)-1$, and exactly one
    has length $\ell(w)+1$.

    \item If $0<\ell(u)<n$, then $u\leq v$ if and only if either
    $v=u$ or $\ell(v)>\ell(u)$.

    \item For every $w\in I_2(n)$, one has $\card{N_L(w)}=\ell(w)$. If
    $\alpha\in\{\alpha_1,\alpha_2\}$ and
    $\ell(s_\alpha w)=\ell(w)+1$, then
    \begin{align*}
         N_L(s_\alpha w)&=\{\alpha\}\sqcup s_\alpha N_L(w), \\
         N_R(s_\alpha w)&=\{w^{-1}(\alpha)\}\sqcup  N_R(w).
    \end{align*}
\end{enumerate}
\end{lemma}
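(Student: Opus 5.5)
The plan is to argue directly from the dihedral presentation, using two facts. First, every non-identity element other than $w_0$ has a unique reduced word, and that word alternates between $s_1$ and $s_2$. Second, $I_2(n)$ has exactly two elements of each length $k$ with $0<k<n$, namely the alternating words of length $k$ beginning with $s_1$ and with $s_2$. It has one element each of length $0$ and length $n$.

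For part (1), take $w\notin\{e,w_0\}$ with unique reduced word $s_{i_1}s_{i_2}\cdots s_{i_k}$. Then $s_{i_1}w$ is the alternating word obtained by deleting the first letter, so it has length $k-1$. For the other generator $s_j$, the word $s_jw=s_js_{i_1}\cdots s_{i_k}$ is alternating of length $k+1\le n$. Alternating words of length at most $n$ are reduced; this is the standard fact that the first relation $(s_1s_2)^n=e$ only identifies the two alternating words of length $n$. Hence $\ell(s_jw)=k+1$. The two lengths differ, which gives exactly one simple left descent and exactly one simple left ascent.

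For part (2), let $0<\ell(u)<n$. If $\ell(v)>\ell(u)$, take a reduced word for $v$: it is alternating of length $\ell(v)\ge \ell(u)+1$. It therefore contains both alternating subwords of length $\ell(u)$ (one starting with $s_1$, one with $s_2$), obtained by dropping the first letter or not. So every element of length $\ell(u)$, and in particular $u$, lies below $v$. Conversely, a subword of a reduced word of $v$ represents an element of length at most $\ell(v)$. So $u\le v$ with $u\ne v$ forces $\ell(u)<\ell(v)$; equality of lengths would force $u=v$, since the subword is then the whole word.

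For part (3), I would use the standard bijection $\alpha\mapsto w^{-1}(\alpha)$ between left inversions and roots sent negative: $\ell(s_\alpha w)<\ell(w)$ if and only if $w^{-1}(\alpha)\in\Phi^-$. In rank two this can be checked from the angular description of the roots. I expect this to be the main obstacle if one insists on first principles. It may be easier to prove the recursions first and deduce the count by induction from $N_L(e)=\emptyset$.

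For the left recursion, let $\alpha$ be simple with $\ell(s_\alpha w)=\ell(w)+1$. The reflections $s_\gamma$ that shorten $s_\alpha w$ on the left are of two kinds. One is $s_\alpha$ itself. The others are $s_\gamma=s_\alpha s_\beta s_\alpha$ with $s_\beta$ shortening $w$, since $s_\gamma s_\alpha w=s_\alpha(s_\beta w)$. Here I would use the exchange property in the dihedral setting: lengths under left multiplication by $s_\alpha$ shift by exactly $\pm1$. The associated root is $\gamma=s_\alpha(\beta)$. It is positive because $s_\alpha$ permutes $\Phi^+\setminus\{\alpha\}$, and $\beta\neq\alpha$ since $\alpha\notin N_L(w)$. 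The union is disjoint for the same reason, and counting gives $|N_L(s_\alpha w)|=\ell(w)+1$.

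For the right recursion, use $N_R(x)=N_L(x^{-1})$ up to the correspondence $\ell(xs_\gamma)=\ell(s_{x(\gamma)}x)$ with $\gamma=x^{-1}(\cdot)$. The new right inversion of $s_\alpha w$ is the root $\gamma$ with $s_\alpha w s_\gamma=w$, that is $s_\gamma=w^{-1}s_\alpha w=s_{w^{-1}(\alpha)}$. This root is positive because $\ell(s_\alpha w)>\ell(w)$, by the sign characterization above. The old right inversions persist because $(s_\alpha w)s_\gamma=s_\alpha(ws_\gamma)$ has length at most $\ell(ws_\gamma)+1\le\ell(w)<\ell(s_\alpha w)$. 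Disjointness and the count $\ell(w)+1$ then close the argument.
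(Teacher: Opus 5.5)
Parts (1) and (2) follow the paper's argument. The gap is in (3), in your route that avoids the root-sign criterion. You assert that for positive $\gamma\neq\alpha$, $s_\gamma$ shortens $s_\alpha w$ exactly when $s_\beta$ shortens $w$, where $\beta=s_\alpha\gamma$. Your only justification is that left multiplication by $s_\alpha$ changes length by $\pm1$, and that gives only one direction. If $\ell(s_\beta w)<\ell(w)$, then $\ell(s_\alpha s_\beta w)\le\ell(w)<\ell(s_\alpha w)$, so $\{\alpha\}\sqcup s_\alpha N_L(w)\subseteq N_L(s_\alpha w)$. The converse does not follow. From $\ell(s_\alpha s_\beta w)\le\ell(w)$ you only get $\ell(s_\beta w)\le\ell(w)+1$. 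Length-shifting does not exclude the configuration $\ell(s_\beta w)=\ell(w)+1$, $\ell(s_\alpha s_\beta w)=\ell(w)$, in which $s_\alpha$ is an ascent of $w$ but a descent of $s_\beta w$. Since you then obtain $\lvert N_L(x)\rvert=\ell(x)$ by counting through this recursion, you only get $\lvert N_L(s_\alpha w)\rvert\ge\ell(w)+1$. Your $N_R$ recursion is an inclusion-plus-count argument resting on that same count, so it inherits the gap. It also appeals to the sign criterion for the positivity of $w^{-1}(\alpha)$, so the detour does not actually avoid the criterion.

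Any of the following closes the gap.
\begin{itemize}
\item Use the criterion $\ell(s_\gamma x)<\ell(x)\iff x^{-1}(\gamma)\in\Phi^-$, which the paper takes as the standard rank-two fact. The identity $(s_\alpha w)^{-1}(\gamma)=w^{-1}(s_\alpha\gamma)$ then gives both inclusions at once, and your induction yields the count. The paper instead counts via a minimal gallery.
\item Staying with lengths, rule out the bad configuration directly in $I_2(n)$. Both $y=s_\alpha s_\beta w$ and $w$ would have length $\ell(w)<n$ and left ascent $s_\alpha$. The two elements of each length $0<k<n$ begin with different letters, hence have different left ascents, so $y=w$. This forces $s_\beta=s_\alpha$, i.e.\ $\gamma=\alpha$, a contradiction.
\item Count directly. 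The reflections are exactly the odd-length elements, so $\gamma\mapsto s_\gamma x$ is a bijection from $\Phi^+$ onto the $n$ elements whose length has parity opposite to $\ell(x)$. Exactly $\ell(x)$ of these are shorter than $x$. With this count in hand, your one-sided inclusions suffice for both recursions, just as you already argued for $N_R$.
\end{itemize}
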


\begin{proof}
Every reduced word in $I_2(n)$ is
alternating, and for each $1\leq k<n$ the two alternating words of length
$k$ represent distinct elements. The two alternating words of length $n$
represent the longest element $w_0$. Hence every element other than $e$
and $w_0$ has a unique nonempty reduced word.

Now let $0<\ell(w)<n$, and write the unique reduced word for $w$ as
$w=s_{\alpha_i}s_{\alpha_j}\cdots$. Left multiplication by $s_{\alpha_i}$
cancels the first letter and lowers length by one. Left multiplication by the
other simple reflection gives an alternating word of length
$\ell(w)+1\leq n$, so it raises length by one. This proves (1).

For (2), use the subword criterion for Bruhat order. If
$u\leq v$, then $\ell(u)\leq\ell(v)$, and equality forces $u=v$. Thus,
if $u\leq v$, then either $v=u$ or $\ell(v)>\ell(u)$. Conversely,
suppose $\ell(v)>\ell(u)$. Every alternating reduced word of length
$\ell(v)$ contains both alternating words of length $\ell(u)$ as subwords.
Thus a reduced word for $u$ occurs as a subword of a reduced word for $v$,
so $u \leq v$.

For (3), use the usual rank-two inversion criterion:
$\ell(s_\beta w)<\ell(w)$ if and only if $w^{-1}(\beta)\in\Phi^-$. Thus
$N_L(w)$ is the set of positive roots sent negative by $w^{-1}$. A minimal
gallery from the fundamental chamber $C$ to the chamber $wC$ crosses exactly
$\ell(w)$ reflecting hyperplanes, so $\card{N_L(w)}=\ell(w)$. Finally, if
$\ell(s_\alpha w)=\ell(w)+1$, then $w^{-1}(\alpha)\in\Phi^+$. Since a
simple reflection sends only its own simple root from $\Phi^+$ to $\Phi^-$,
the positive roots sent negative by $(s_\alpha w)^{-1}=w^{-1}s_\alpha$ are
precisely $\alpha$, together with the roots $s_\alpha\beta$ for
$\beta\in N_L(w)$. This is the first line of (3). The same logic applies for $N_R(w)$.
\end{proof}

\subsection{Representations}\label{ssec:representations}
The representation theory of the dihedral group is very well understood. We record several standard facts here. These can be found, for example, in \cite[Chapter 5, \S 5.3]{Serre}, or follow from direct computation.

All irreducible representations of $I_2(n)$ are either one- or two-dimensional. If $n$ is odd, then $I_2(n)$ has $2$ one-dimensional irreducible representations and $(n-1)/2$ two-dimensional irreducible representations. If $n$ is even, then $I_2(n)$ has $4$ one-dimensional irreducible representations and $(n-2)/2$ two-dimensional irreducible representations. 

The trivial representation is $\one(w) \coloneqq (1)$ and the sign representation is $\sgn(w) \coloneqq (-1)^{\ell(w)}$. 

Let $r=s_1s_2$. When $n$ is even, let $\eta_0$ and $\eta_1$ denote the one-dimensional
representations determined by
\[
    \eta_0(r)=-1,
    \qquad
    \eta_0(s_1)=1,
    \qquad
    \eta_1(r)=-1,
    \qquad
    \eta_1(s_1)=-1.
\]

For each integer $j$ with $1\leq j<n/2$, let $V_j$
denote the two-dimensional irreducible representation of $I_2(n)$ whose
character is given by
\[
    \chi_{V_j}(r^a)
        =
    2\cos\left(\frac{2\pi ja}{n}\right),
    \qquad
    \chi_{V_j}(r^as_1)
        =
    0
\]
for $a\in\mathbb Z$. In particular, $V_1$ is the defining representation, which sends $w$ to the element of $\GL_2(\C)$ that defines it as a symmetry of the $n$-gon. These, together with the two one-dimensional representations for $n$ odd or the four one-dimensional representations for $n$ even, constitute all of the irreducible representations of $I_2(n)$.

There is one more family of representations, indexed by positive roots, that will be critical for our results. For $\gamma\in\Phi^+$, when $n$ is even let $\eta_\gamma$ be the unique representation in
$\{\eta_0,\eta_1\}$ satisfying $\eta_\gamma(s_\gamma)=1$. Let
\[
    C_\gamma
        \deftobe
    \operatorname{Ind}_{\genby{s_\gamma}}^{I_2(n)}\one
\]
be the permutation representation on the cosets of
$\genby{s_\gamma}$. Its irreducible decomposition is
\begin{equation}
\label{eq:reflection_coset_decomposition}
    C_\gamma
    \cong
    \begin{cases}
        \displaystyle
        \one
        \oplus
        \bigoplus_{j=1}^{(n-1)/2}V_j,
            & n \text{ is odd},\\[12pt]
        \displaystyle
        \one
        \oplus
        \eta_\gamma
        \oplus
        \bigoplus_{j=1}^{n/2-1}V_j,
            & n \text{ is even}.
    \end{cases}
\end{equation}

\begin{example}
        Consider $I_2(4)$, generated by $s_1\coloneqq s_{\alpha_1}$ and $s_2\coloneqq s_{\alpha_2}$. The longest element is $s_1s_2s_1s_2=s_2s_1s_2s_1$. The trivial representation always evaluates to $1$, so $\one(s_1)=\one(s_2)=1$. The sign representation measures the parity of the lengths of words as expressions in $s_1$ and $s_2$, so $\sgn(s_1)=\sgn(s_2)=-1$. Also, $\sgn(s_{\alpha_3})=-1$ since $s_{\alpha_3}=s_1s_2s_1$.

        Finally, $V_1(s_1)=\begin{pmatrix}
            -1&0\\0&1
        \end{pmatrix}$, since the reflection associated to $\alpha_1$ is the reflection across the $y$-axis, and $V_1(s_2)=\begin{pmatrix}
            0 & 1\\ 1&0
        \end{pmatrix}$, since the reflection associated to $\alpha_2$ is the reflection across $y=x$, which swaps $x$ and $y$.
    \end{example}

\subsection{The Root Poset}
For each choice of simple roots in the root system for $I_2(n)$, the positive roots $\Phi^+$ have a natural partial order.
\begin{definition}\label{def:root_poset}
    Let $\alpha_1,\alpha_2$ be simple roots. The root poset of $I_2(n)$ is the partial order on $\Phi^+$ given by $\alpha \leq \beta$ whenever $\beta - \alpha \in \mathrm{Cone}(\alpha_1,\alpha_{2}) \coloneqq \{c_1\alpha_1 + c_2\alpha_2 \mid c_1,c_2 \in \R_{\geq 0}\}$ \cite{ChenKriloff_noncrystal}.
\end{definition}

The Hasse diagrams for these posets are given by the following:
\begin{center}
\begin{tikzpicture}
\begin{scope}
    \draw (-1,0) node (a0) {$\alpha_{1}$};
    \draw (1,0) node (an1) {$\alpha_{2}$};
    \draw (-1,1) node (a1) {$\alpha_{3}$};
    \draw (1,1) node (an2) {$\alpha_{4}$};
    \draw (-1,2) node (dtsl) {$\vdots$};
    \draw (0,2) node (dtsc) {$\vdots$};
    \draw (1,2) node (dtsr) {$\vdots$};
    \draw (-1,3) node (al) {$\alpha_{n-1}$};
    \draw (1,3) node (ar) {$\alpha_{n}$};

    \draw (a0)--(a1)--(dtsl)--(al);
    \draw (an1)--(an2)--(dtsr)--(ar);
    \draw (a0)--(an2)--(dtsc)--(al);
    \draw (an1)--(a1)--(dtsc)--(ar);
    \draw (0,-1) node (name) {Even};
\end{scope}  
\begin{scope}[xshift = 3in]
    \draw (-1,0) node (a0) {$\alpha_{1}$};
    \draw (1,0) node (an1) {$\alpha_{2}$};
    \draw (-1,1) node (a1) {$\alpha_{3}$};
    \draw (1,1) node (an2) {$\alpha_{4}$};
    \draw (-1,2) node (dtsl) {$\vdots$};
    \draw (0,2) node (dtsc) {$\vdots$};
    \draw (1,2) node (dtsr) {$\vdots$};
    \draw (-1,3) node (al) {$\alpha_{n-2}$};
    \draw (1,3) node (ar) {$\alpha_{n-1}$};
    \draw (0,4) node (a) {$\alpha_{n}$};

    \draw (a0)--(a1)--(dtsl)--(al)--(a);
    \draw (an1)--(an2)--(dtsr)--(ar)--(a);
    \draw (a0)--(an2)--(dtsc)--(al);
    \draw (an1)--(a1)--(dtsc)--(ar);
    \draw (0,-1) node (name) {Odd};
\end{scope} 
\end{tikzpicture}
\end{center}
\begin{remark}
    One notable (and standard) property of these root posets is that $\alpha_i < \alpha_j$ in the root poset if and only if the corresponding reflections satisfy $\ell(s_{\alpha_i}) < \ell(s_{\alpha_j})$, which is equivalent to $s_{\alpha_i} < s_{\alpha_j}$ in the strong Bruhat order on $I_2(n)$.
\end{remark}
We consider $H \subseteq \Phi^+$ where $H$ is a lower order ideal in the root poset. We always insist that $H$ contain the simple roots $\alpha_1$ and $\alpha_{2}$. We call such $H$ a \emph{Hessenberg space}. As such the possible $H$ are:
\[
\begin{matrix}
    \{\alpha_1,\alpha_2, \ldots, \alpha_{j-1},\alpha_j\}\qquad\text{and}\qquad\{\alpha_1,\alpha_2 \ldots, \alpha_{2k-2}, \alpha_{2k} \}
\end{matrix}
\]
for $j \in \{2,\ldots,n\}$ and $k \in \{2,\ldots,\lfloor n/2\rfloor\}$.

\begin{remark}
    Our characterization covers more subsets of roots than it may seem. In particular, any convex subset of positive roots can be written as a Hessenberg space up to a different choice of simple roots. For example in $I_2(6)$, up to sign, the convex subset $\{\alpha_1,\alpha_3,\alpha_5\}$ of positive roots can be made into a Hessenberg space by declaring $\alpha_3$ and $-\alpha_5$ to be the simple roots. See Figure~\ref{fig:CharacterizingConvexSubsets}.
    \begin{figure}
        \begin{tikzpicture}[scale = 0.8]
            \begin{scope}[xshift=0in]
                \def\angle{360/6}
                \def\radius{2}

                \foreach \i in {0,1,2,3,4,5} {
                    \path (\i*\angle:\radius) coordinate (A\i);
                }
                \foreach \i in {0,1,2,3,4,5,6,7,8,9,10,11} {
                    \path (\i*\angle/2:\radius+0.4) coordinate (B\i);
                }
                \draw[thick] (A0) -- (A1) -- (A2) -- (A3) -- (A4) -- (A5) -- cycle;
                \foreach \i in {0,1,2,3,4,5,6,7,8,9,10,11} {
                    \draw[dashed, red] (0,0) -- (B\i);
                }
                \foreach \i in {0,1,2,3,4,5} {
                    \draw[thick,->] (0,0) -- (\i*\angle/2:\radius+0.1);
                }
                \draw (0*\angle/2:\radius+0.5) node (Al0) {$\alpha_{1}$};
                \draw (1*\angle/2:\radius+0.5) node (Al1) {$\alpha_{3}$};
                \draw (2*\angle/2:\radius+0.5) node (Al2) {$\alpha_{5}$};
                \draw (3*\angle/2:\radius+0.5) node (Al3) {$\alpha_{6}$};
                \draw (4*\angle/2:\radius+0.5) node (Al4) {$\alpha_{4}$};
                \draw (5*\angle/2:\radius+0.5) node (Al5) {$\alpha_{2}$};

                \draw[blue, thick, rounded corners=6pt, fill=blue, fill opacity=0.08]
                    (210:0.35) -- (-12:\radius+0.85) arc (-12:72:\radius+0.85) -- cycle;
            \end{scope}

            \begin{scope}[xshift=3in]
                \def\angle{360/6}
                \def\radius{2}

                \foreach \i in {0,1,2,3,4,5} {
                    \path (\i*\angle:\radius) coordinate (A\i);
                }
                \foreach \i in {0,1,2,3,4,5,6,7,8,9,10,11} {
                    \path (\i*\angle/2:\radius+0.4) coordinate (B\i);
                }
                \draw[thick] (A0) -- (A1) -- (A2) -- (A3) -- (A4) -- (A5) -- cycle;
                \foreach \i in {0,1,2,3,4,5,6,7,8,9,10,11} {
                    \draw[dashed, red] (0,0) -- (B\i);
                }
                \foreach \i in {0,1,-1,-2,-3,-4} {
                    \draw[thick,->] (0,0) -- (\i*\angle/2:\radius+0.1);
                }
                \draw (0*\angle/2:\radius+0.5) node (Al0) {$\alpha_{1}$};
                \draw (1*\angle/2:\radius+0.5) node (Al1) {$\alpha_{3}$};
                \draw (2*\angle/2:-\radius-0.5) node (Al2) {$-\alpha_{5}$};
                \draw (3*\angle/2:-\radius-0.5) node (Al3) {$-\alpha_{6}$};
                \draw (4*\angle/2:-\radius-0.5) node (Al4) {$-\alpha_{4}$};
                \draw (5*\angle/2:-\radius-0.5) node (Al5) {$-\alpha_{2}$};

                \draw[blue, thick, rounded corners=10pt, fill=blue, fill opacity=0.08]
                    (-12:\radius+0.95) arc (-12:42:\radius+0.95) -- (135:0.35)
                    -- (228:\radius+0.95) arc (228:252:\radius+0.95) -- (300:0.35) -- cycle;
            \end{scope}
        \end{tikzpicture} 
        \caption{The set $\setof{\alpha_{1}, \alpha_{3}, \alpha_{5}}$ in $I_2(6)$ as a Hessenberg space.}
        \label{fig:CharacterizingConvexSubsets}
    \end{figure} 
\end{remark}

\begin{remark}
    Since $I_2(n)$ is not crystallographic in general, there are several reasonable ways to define a root poset that do not necessarily coincide. This root poset does not \emph{precisely} align with the special cases $G_2 \cong I_2(6)$,  $A_2 \cong I_2(3)$ and $B_2,C_2 \cong I_2(4)$. For $A_2$ the root poset is identical. The root posets $B_2$ and $C_2$ are different, but every lower order ideal for $I_2(4)$ is a lower order ideal for the $B_2$ or $C_2$ root system, and so for our purposes are the same. The root poset for $G_2$ is entirely different, as neither family of ideals contains the other, but we do not explore this exceptional group here.
\end{remark}
\begin{definition} \label{def:inversion_set}
    Given $H\subseteq\Phi^+$, the \emph{$H$-inversion set} of
    $w\in I_2(n)$ is
    \[
        \hinv(w)
            \coloneqq
        \{\alpha\in H\mid \ell(ws_\alpha)<\ell(w)\} = H \cap N_R(w).
    \]
    The map $\alpha\mapsto-w(\alpha)$ identifies $\hinv(w)$ with
    $w(-H)\cap\Phi^+$. The $H$-length is 
    \[\hlen(w) = \abs{\hinv(w)}.\]
\end{definition}

Given $H \subseteq \Phi^+$, the associated \emph{Cayley graph} is the graph with $I_2(n)$ as its vertices and $\{(w,ws_\alpha) \mid \alpha \in H\}$ as its edges. We impose two additional structures: first and foremost the \emph{labeled Cayley graph} $\cg_H = (\cv,\ce_H,\cl)$ has vertices $\cv = I_2(n)$, edges $\ce_H = \{(w,ws_\alpha) \mid w \in I_2(n),\; \alpha \in H\}$ and edge labeling $\cl\colon \ce_H \to \C[x,y]$ given by $\cl(w,ws_\alpha) \coloneqq t_{w(\alpha)}$ (up to multiplication by a unit).

\begin{example} \label{ex:cayley}
    Below we have the labeled Cayley graphs for $I_2(3)$ with $H = \{\alpha_1,\alpha_2,\alpha_3\}$ and $I_2(4)$ with $H = \{\alpha_1,\alpha_2\}$. For $I_2(3)$, we kept the labels in terms of the corresponding roots $w(\alpha)$, and for $I_2(4)$ we translated the labels into $\C[x,y]$ for the embedding given in Example~\ref{ex:reflection}, and scaled by a unit for readability. On the left, we have $\hinv(s_2s_1) = \{\alpha_1, \alpha_3\}$.
\begin{center}
\begin{tikzpicture}
    \begin{scope}[yshift=0.5cm, scale=1.8]
        \draw  (0,0) node[draw] (v0) {$e$};
        \draw  (1,1) node[draw]  (v1) {$s_2$};
        \draw  (-1,1) node[draw] (v2) {$s_1$};
        \draw  (-1,2) node[draw] (v3) {$s_1s_2$};
        \draw  (1,2) node[draw] (v4) {$s_2s_1$};
        \draw (0,3)  node[draw]  (v5) {$s_1s_2s_1$};
        
        \draw (v0)--(v1)node[midway,right] {\textcolor{blue}{ $\alpha_2$}};
        \draw (v0)--(v2)node[midway,left] {\textcolor{blue}{ $\alpha_1$}};
        \draw (v1)--(v4)node[midway,right] {\textcolor{blue}{ $\alpha_3$}};
        \draw (v2)--(v3)node[midway,left] {\textcolor{blue}{ $\alpha_3$}};
        \draw (v3)--(v5)node[midway,left] {\textcolor{blue}{ $\alpha_2$}};
        \draw (v4)--(v5)node[midway,right] {\textcolor{blue}{ $\alpha_1$}};
        \draw (v0)--(v5)node[near end,right] {\textcolor{blue}{ $\alpha_3$}};
        \draw (v1)--(v3)node[near start,above] {\textcolor{blue}{ $-\alpha_1$}};
        \draw (v2)--(v4)node[near start,above] {\textcolor{blue}{ $-\alpha_2\;\;\;$}};
        
    \end{scope}
    \begin{scope}[xshift=3in, scale=1.5]
        \tikzstyle{every node}=[shape=rectangle, inner sep=4pt]
        \draw  (0,0.2) node[draw] (v0) {$e$};
        \draw  (1.2,1) node[draw]  (v1) {$s_2$};
        \draw  (-1.2,1) node[draw] (v2) {$s_1$};
        \draw  (-1.5,2) node[draw] (v3) {$s_1 s_2$};
        \draw  (1.5,2) node[draw] (v4) {$s_2 s_1$};
        \draw (1.2,3) node[draw] (v5) {$s_2 s_1 s_2$};
        \draw (-1.2,3) node[draw] (v6) {$s_1 s_2s_1$};
        \draw (0,3.8)  node[draw]  (v7) {$s_1 s_2s_1s_2$};
        
        \draw (v0)--(v1) node[midway,right] {\textcolor{blue}{$-x+y$}};
        \draw (v0)--(v2) node[midway,left] {\textcolor{blue}{$x$}};
        \draw (v1)--(v4)node[midway,right] {\textcolor{blue}{$y$}};
        \draw (v2)--(v3) node[midway,left] {\textcolor{blue}{$x+y$}};
        \draw (v3)--(v6) node[midway,left] {\textcolor{blue}{$y$}};
        \draw (v4)--(v5)node[midway,right] {\textcolor{blue}{$x+y$}};
        \draw (v5)--(v7)node[midway,right] {\textcolor{blue}{$x$}};
        \draw (v6)--(v7)node[midway,left] {\textcolor{blue}{$-x+y$}};

    \end{scope}
\end{tikzpicture}
\end{center}
\end{example}

Second, the Cayley graph has a natural acyclic orientation by insisting that edges are directed from elements of lesser length to elements of greater length. In this case $\hlen(w)$ is the in-degree of $w$ in this directed graph. 

As such, there is a natural partial order $\prec_H$ given by the transitive closure of $w\prec_H ws_\alpha$ for all $\alpha \in H$ where $\ell(w) < \ell(ws_\alpha)$. 
\begin{example}
    When $H = \{\alpha_1,\alpha_2\}$, this is the right weak Bruhat order on $I_2(n)$. When $H = \Phi^+$, this is the strong Bruhat order on $I_2(n)$.
\end{example}
\begin{example}
    If $n = 5$ and $H = \{\alpha_1,\alpha_2,\alpha_3\}$, the directed graph and Hasse diagram for $\prec_H$ are:
    \begin{center}
        \begin{tikzpicture}
    \begin{scope}[xshift=0in, scale=1.25]
        \tikzstyle{every node}=[shape=rectangle, inner sep=4pt]
        \draw  (0,0.2) node[draw] (v0) {$e$};
        \draw  (1.2,1) node[draw]  (v1) {$s_2$};
        \draw  (-1.2,1) node[draw] (v2) {$s_1$};
        \draw  (-1.5,2) node[draw] (v3) {$s_1 s_2$};
        \draw  (1.5,2) node[draw] (v4) {$s_2 s_1$};
        \draw (1.5,3) node[draw] (v5) {$s_2 s_1 s_2$};
        \draw (-1.5,3) node[draw] (v6) {$s_1 s_2s_1$};
        \draw (-1.2,4)  node[draw]  (v7) {$(s_1 s_2)^2$};
        \draw (1.2,4)  node[draw]  (v8) {$(s_2 s_1)^2$};
        \draw (0,4.8)  node[draw]  (v9) {$(s_1 s_2)^2s_1$};

        \draw[->, thick] (v0)--(v1);
        \draw[->, thick] (v1)--(v4);
        \draw[->, thick] (v4)--(v5);
        \draw[->, thick] (v5)--(v8);
        \draw[->, thick] (v8)--(v9);
        \draw[<-, thick] (v9)--(v7);
        \draw[<-, thick] (v7)--(v6);
        \draw[<-, thick] (v6)--(v3);
        \draw[<-, thick] (v3)--(v2);
        \draw[<-, thick] (v2)--(v0);
        \draw[->, thick] (v0)--(v6);
        \draw[->, thick] (v1) to[bend left=30] (v8);
        \draw[->, thick] (v2) to (v4);
        \draw[->, thick] (v3) to[bend right=30] (v9);
        \draw[->, thick] (v5) to (v7);
        
    \end{scope}
    \begin{scope}[xshift=3in, scale=1.25]
        \tikzstyle{every node}=[shape=rectangle, inner sep=4pt]
        \draw  (0,0.2) node[draw] (v0) {$e$};
        \draw  (1.2,1) node[draw]  (v1) {$s_2$};
        \draw  (-1.2,1) node[draw] (v2) {$s_1$};
        \draw  (-1.5,2) node[draw] (v3) {$s_1 s_2$};
        \draw  (1.5,2) node[draw] (v4) {$s_2 s_1$};
        \draw (1.5,3) node[draw] (v5) {$s_2 s_1 s_2$};
        \draw (-1.5,3) node[draw] (v6) {$s_1 s_2s_1$};
        \draw (-1.2,4)  node[draw]  (v7) {$(s_1 s_2)^2$};
        \draw (1.2,4)  node[draw]  (v8) {$(s_2 s_1)^2$};
        \draw (0,4.8)  node[draw]  (v9) {$(s_1 s_2)^2s_1$};

        \draw[thick] (v0)--(v1)--(v4)--(v5)--(v8)--(v9)--(v7)--(v6)--(v3)--(v2)--(v0);
        \draw[thick] (v2) to (v4);
        \draw[thick] (v5) to (v7);
        
    \end{scope}
\end{tikzpicture}
    \end{center}
\end{example}

\subsection{Splines and their Algebraic Properties}
Given a subset $H\subseteq \Phi^+ $ of roots whose associated reflections generate $I_2(n)$, the  \emph{set of splines} is:
\[\Ms \coloneqq \left\{\spl{\rho} \colon I_2(n) \to \C[x,y] \mid \spl{\rho}(w) - \spl{\rho}(ws_\alpha) \in \llangle t_{w(\alpha)} \rrangle,\text{ for all } w \in I_2(n),\;\;\alpha \in H   \right\}.\]
We will always insist that $H$ is a Hessenberg space. This construction is more easily seen as a \emph{generalized spline} on the labeled Cayley graph $\cg_H$. An element of $\Ms$ is an assignment of polynomials to vertices of $\cg_H$ such that the difference of two polynomials along an edge is divisible by the label on that edge.
The following are some splines on $I_2(3)$ with $H = \{\alpha_1,\alpha_2,\alpha_3\}$. The labels are given by the positive root $\alpha$ instead of the polynomials $t_\alpha$ for readability.
\begin{center}
\begin{tikzpicture}
\begin{scope}[scale=1.5]
    \draw  (0,0) node[draw] (v0) {$e$};
    \draw  (1,1) node[draw]  (v1) {$s_2$};
    \draw  (-1,1) node[draw] (v2) {$s_1$};
    \draw  (-1,2) node[draw] (v3) {$s_1s_2$};
    \draw  (1,2) node[draw] (v4) {$s_2s_1$};
    \draw (0,3)  node[draw]  (v5) {$s_1s_2s_1$};

    \draw (0.3,0)  node  (l0) {\textcolor{red}{$1$}};
    \draw  (1.35,1) node  (l1) {\textcolor{red}{$1$}};
    \draw  (-1.35,1) node (l2) {\textcolor{red}{$1$}};
    \draw  (-1.5,2) node (l3) {\textcolor{red}{$1$}};
    \draw  (1.5,2) node  (l4) {\textcolor{red}{$1$}};
    \draw (0.65,3)  node  (l5) {\textcolor{red}{$1$}};

    
    \draw (v0)--(v1)node[midway,right] {\textcolor{blue}{ $\alpha_2$}};
    \draw (v0)--(v2)node[midway,left] {\textcolor{blue}{ $\alpha_1$}};
    \draw (v1)--(v4)node[midway,right] {\textcolor{blue}{ $\alpha_3$}};
    \draw (v2)--(v3)node[midway,left] {\textcolor{blue}{ $\alpha_3$}};
    \draw (v3)--(v5)node[midway,left] {\textcolor{blue}{ $\alpha_2$}};
    \draw (v4)--(v5)node[midway,right] {\textcolor{blue}{ $\alpha_1$}};
    \draw (v0)--(v5)node[near end,right] {\textcolor{blue}{ $\alpha_3$}};
    \draw (v1)--(v3)node[near start,above] {\textcolor{blue}{ $\alpha_1$}};
    \draw (v2)--(v4)node[near start,above] {\textcolor{blue}{ $\alpha_2$}};
    
\end{scope}
\begin{scope}[xshift=2.2in, scale=1.5]
    \draw  (0,0) node[draw] (v0) {$e$};
    \draw  (1,1) node[draw]  (v1) {$s_2$};
    \draw  (-1,1) node[draw] (v2) {$s_1$};
    \draw  (-1,2) node[draw] (v3) {$s_1s_2$};
    \draw  (1,2) node[draw] (v4) {$s_2s_1$};
    \draw (0,3)  node[draw]  (v5) {$s_1s_2s_1$};

    \draw (0.3,0)  node  (l0)  {\textcolor{red}{$0$}};
    \draw  (1.5,1) node  (l1) {\textcolor{red}{$t_{\alpha_2}$}};
    \draw  (-1.35,1) node (l2) {\textcolor{red}{$0$}};
    \draw  (-1.6,2) node (l3)  {\textcolor{red}{$t_{\alpha_3}$}};
    \draw  (1.6,2) node  (l4)  {\textcolor{red}{$t_{\alpha_2}$}};
    \draw (0.7,3)  node  (l5) {\textcolor{red}{$t_{\alpha_3}$}};
    
    \draw (v0)--(v1)node[midway,right] {\textcolor{blue}{ $\alpha_2$}};
    \draw (v0)--(v2)node[midway,left] {\textcolor{blue}{ $\alpha_1$}};
    \draw (v1)--(v4)node[midway,right] {\textcolor{blue}{ $\alpha_3$}};
    \draw (v2)--(v3)node[midway,left] {\textcolor{blue}{ $\alpha_3$}};
    \draw (v3)--(v5)node[midway,left] {\textcolor{blue}{ $\alpha_2$}};
    \draw (v4)--(v5)node[midway,right] {\textcolor{blue}{ $\alpha_1$}};
    \draw (v0)--(v5)node[near end,right] {\textcolor{blue}{ $\alpha_3$}};
    \draw (v1)--(v3)node[near start,above] {\textcolor{blue}{ $-\alpha_1$}};
    \draw (v2)--(v4)node[near start,above] {\textcolor{blue}{ $-\alpha_2$}};
    
\end{scope}
\begin{scope}[xshift=4.4in, scale=1.5]
    \draw  (0,0) node[draw] (v0) {$e$};
    \draw  (1,1) node[draw]  (v1) {$s_2$};
    \draw  (-1,1) node[draw] (v2) {$s_1$};
    \draw  (-1,2) node[draw] (v3) {$s_1s_2$};
    \draw  (1,2) node[draw] (v4) {$s_2s_1$};
    \draw (0,3)  node[draw]  (v5) {$s_1s_2s_1$};

    \draw (0.3,0)  node  (l0)  {\textcolor{red}{$0$}};
    \draw  (1.35,1) node  (l1) {\textcolor{red}{$0$}};
    \draw  (-1.5,1) node (l2) {\textcolor{red}{$t_{\alpha_1}$}};
    \draw  (-1.6,2) node (l3)  {\textcolor{red}{$t_{\alpha_1}$}};
    \draw  (1.6,2) node  (l4)  {\textcolor{red}{$t_{\alpha_3}$}};
    \draw (0.7,3)  node  (l5) {\textcolor{red}{$t_{\alpha_3}$}};
    
    \draw (v0)--(v1)node[midway,right] {\textcolor{blue}{ $\alpha_2$}};
    \draw (v0)--(v2)node[midway,left] {\textcolor{blue}{ $\alpha_1$}};
    \draw (v1)--(v4)node[midway,right] {\textcolor{blue}{ $\alpha_3$}};
    \draw (v2)--(v3)node[midway,left] {\textcolor{blue}{ $\alpha_3$}};
    \draw (v3)--(v5)node[midway,left] {\textcolor{blue}{ $\alpha_2$}};
    \draw (v4)--(v5)node[midway,right] {\textcolor{blue}{ $\alpha_1$}};
    \draw (v0)--(v5)node[near end,right] {\textcolor{blue}{ $\alpha_3$}};
    \draw (v1)--(v3)node[near start,above] {\textcolor{blue}{ $\alpha_1$}};
    \draw (v2)--(v4)node[near start,above] {\textcolor{blue}{ $\alpha_2$}};
    
\end{scope}
\begin{scope}[yshift = -2.2in, xshift=0in, scale=1.5]
    \draw  (0,0) node[draw] (v0) {$e$};
    \draw  (1,1) node[draw]  (v1) {$s_2$};
    \draw  (-1,1) node[draw] (v2) {$s_1$};
    \draw  (-1,2) node[draw] (v3) {$s_1s_2$};
    \draw  (1,2) node[draw] (v4) {$s_2s_1$};
    \draw (0,3)  node[draw]  (v5) {$s_1s_2s_1$};

    \draw (0.3,0)  node  (l0)  {\textcolor{red}{$0$}};
    \draw  (1.35,1) node  (l1) {\textcolor{red}{$0$}};
    \draw  (-1.35,1) node (l2) {\textcolor{red}{$0$}};
    \draw  (-1.2,2.4) node (l3)  {\textcolor{red}{$t_{\alpha_1}t_{\alpha_3}$}};
    \draw  (1.5,2) node  (l4)  {\textcolor{red}{$0$}};
    \draw (0.9,3)  node  (l5) {\textcolor{red}{$t_{\alpha_1}t_{\alpha_3}$}};
    
    \draw (v0)--(v1)node[midway,right] {\textcolor{blue}{ $\alpha_2$}};
    \draw (v0)--(v2)node[midway,left] {\textcolor{blue}{ $\alpha_1$}};
    \draw (v1)--(v4)node[midway,right] {\textcolor{blue}{ $\alpha_3$}};
    \draw (v2)--(v3)node[midway,left] {\textcolor{blue}{ $\alpha_3$}};
    \draw (v3)--(v5)node[midway,left] {\textcolor{blue}{ $\alpha_2$}};
    \draw (v4)--(v5)node[midway,right] {\textcolor{blue}{ $\alpha_1$}};
    \draw (v0)--(v5)node[near end,right] {\textcolor{blue}{ $\alpha_3$}};
    \draw (v1)--(v3)node[near start,above] {\textcolor{blue}{ $\alpha_1$}};
    \draw (v2)--(v4)node[near start,above] {\textcolor{blue}{ $\alpha_2$}};
    
\end{scope}
\begin{scope}[yshift = -2.2in, xshift=2in, scale=1.5]
    \draw  (0,0) node[draw] (v0) {$e$};
    \draw  (1,1) node[draw]  (v1) {$s_2$};
    \draw  (-1,1) node[draw] (v2) {$s_1$};
    \draw  (-1,2) node[draw] (v3) {$s_1s_2$};
    \draw  (1,2) node[draw] (v4) {$s_2s_1$};
    \draw (0,3)  node[draw]  (v5) {$s_1s_2s_1$};

    \draw (0.3,0)  node  (l0)  {\textcolor{red}{$0$}};
    \draw  (1.35,1) node  (l1) {\textcolor{red}{$0$}};
    \draw  (-1.35,1) node (l2) {\textcolor{red}{$0$}};
    \draw  (-1.5,2) node (l3)  {\textcolor{red}{$0$}};
    \draw  (1.8,2) node  (l4)  {\textcolor{red}{$t_{\alpha_2}t_{\alpha_3}$}};
    \draw (0.9,3)  node  (l5) {\textcolor{red}{$t_{\alpha_2}t_{\alpha_3}$}};
    
    \draw (v0)--(v1)node[midway,right] {\textcolor{blue}{ $\alpha_2$}};
    \draw (v0)--(v2)node[midway,left] {\textcolor{blue}{ $\alpha_1$}};
    \draw (v1)--(v4)node[midway,right] {\textcolor{blue}{ $\alpha_3$}};
    \draw (v2)--(v3)node[midway,left] {\textcolor{blue}{ $\alpha_3$}};
    \draw (v3)--(v5)node[midway,left] {\textcolor{blue}{ $\alpha_2$}};
    \draw (v4)--(v5)node[midway,right] {\textcolor{blue}{ $\alpha_1$}};
    \draw (v0)--(v5)node[near end,right] {\textcolor{blue}{ $\alpha_3$}};
    \draw (v1)--(v3)node[near start,above] {\textcolor{blue}{ $\alpha_1$}};
    \draw (v2)--(v4)node[near start,above] {\textcolor{blue}{ $\alpha_2$}};
    
\end{scope}
\begin{scope}[yshift = -2.2in, xshift=4.4in, scale=1.5]
    \draw  (0,0) node[draw] (v0) {$e$};
    \draw  (1,1) node[draw]  (v1) {$s_2$};
    \draw  (-1,1) node[draw] (v2) {$s_1$};
    \draw  (-1,2) node[draw] (v3) {$s_1s_2$};
    \draw  (1,2) node[draw] (v4) {$s_2s_1$};
    \draw (0,3)  node[draw]  (v5) {$s_1s_2s_1$};

    \draw (0.3,0)  node  (l0)  {\textcolor{red}{$0$}};
    \draw  (1.35,1) node  (l1) {\textcolor{red}{$0$}};
    \draw  (-1.35,1) node (l2) {\textcolor{red}{$0$}};
    \draw  (-1.5,2) node (l3)  {\textcolor{red}{$0$}};
    \draw  (1.5,2) node  (l4)  {\textcolor{red}{$0$}};
    \draw (1,3)  node  (l5) {\textcolor{red}{$t_{\alpha_1}t_{\alpha_2}t_{\alpha_3}$}};
    
    \draw (v0)--(v1)node[midway,right] {\textcolor{blue}{ $\alpha_2$}};
    \draw (v0)--(v2)node[midway,left] {\textcolor{blue}{ $\alpha_1$}};
    \draw (v1)--(v4)node[midway,right] {\textcolor{blue}{ $\alpha_3$}};
    \draw (v2)--(v3)node[midway,left] {\textcolor{blue}{ $\alpha_3$}};
    \draw (v3)--(v5)node[midway,left] {\textcolor{blue}{ $\alpha_2$}};
    \draw (v4)--(v5)node[midway,right] {\textcolor{blue}{ $\alpha_1$}};
    \draw (v0)--(v5)node[near end,right] {\textcolor{blue}{ $\alpha_3$}};
    \draw (v1)--(v3)node[near start,above] {\textcolor{blue}{ $\alpha_1$}};
    \draw (v2)--(v4)node[near start,above] {\textcolor{blue}{ $\alpha_2$}};
    
\end{scope}
\end{tikzpicture}
\end{center}
The following are some splines on $I_2(4)$ with $H = \{\alpha_1,\alpha_2\}$.
\begin{center}
\begin{tikzpicture}
\begin{scope}[xshift=0in, scale=1.5]
    \draw  (0,0.2) node[draw] (v0) {$e$};
    \draw  (0.8,1) node[draw]  (v1) {$s_2$};
    \draw  (-0.8,1) node[draw] (v2) {$s_1$};
    \draw  (-0.8,2) node[draw] (v3) {$s_1 s_2$};
    \draw  (0.8,2) node[draw] (v4) {$s_2 s_1$};
    \draw (0.8,3) node[draw] (v5) {$s_2 s_1 s_2$};
    \draw (-0.8,3) node[draw] (v6) {$s_1 s_2s_1$};
    \draw (0,3.8)  node[draw]  (v7) {$s_1 s_2s_1s_2$};

    \draw  (0.3,0.2) node   (l0) {\textcolor{red}{$1$}};
    \draw  (1.2,1) node   (l1) {\textcolor{red}{$1$}};
    \draw  (-1.2,1) node  (l2) {\textcolor{red}{$1$}};
    \draw  (-1.3,2) node  (l3) {\textcolor{red}{$1$}};
    \draw  (1.3,2) node   (l4) {\textcolor{red}{$1$}};
    \draw (1.4,3) node    (l5) {\textcolor{red}{$1$}};
    \draw (-1.4,3) node   (l6) {\textcolor{red}{$1$}};
    \draw (0.8,3.8)  node (l7) {\textcolor{red}{$1$}};
    

    \draw (v0)--(v1) node[midway,right] {\textcolor{blue}{$-x+y$}};
    \draw (v0)--(v2) node[midway,left] {\textcolor{blue}{$x$}};
    \draw (v1)--(v4)node[midway,right] {\textcolor{blue}{$y$}};
    \draw (v2)--(v3) node[midway,left] {\textcolor{blue}{$x+y$}};
    \draw (v3)--(v6) node[midway,left] {\textcolor{blue}{$y$}};
    \draw (v4)--(v5)node[midway,right] {\textcolor{blue}{$x+y$}};
    \draw (v5)--(v7)node[midway,right] {\textcolor{blue}{$x$}};
    \draw (v6)--(v7)node[midway,left] {\textcolor{blue}{$-x+y$}};
\end{scope}
\begin{scope}[xshift=1.95in, scale=1.5]
    \draw  (0,0.2) node[draw] (v0) {$e$};
    \draw  (0.8,1) node[draw]  (v1) {$s_2$};
    \draw  (-0.8,1) node[draw] (v2) {$s_1$};
    \draw  (-0.8,2) node[draw] (v3) {$s_1 s_2$};
    \draw  (0.8,2) node[draw] (v4) {$s_2 s_1$};
    \draw (0.8,3) node[draw] (v5) {$s_2 s_1 s_2$};
    \draw (-0.8,3) node[draw] (v6) {$s_1 s_2s_1$};
    \draw (0,3.8)  node[draw]  (v7) {$s_1 s_2s_1s_2$};

    \draw  (0.3,0.2) node (l0) {\textcolor{red}{$0$}};
    \draw  (1.4,1) node   (l1) {\textcolor{red}{$x-y$}};
    \draw  (-1.2,1) node  (l2) {\textcolor{red}{$x$}};
    \draw  (-1.4,2) node  (l3) {\textcolor{red}{$-y$}};
    \draw  (1.5,2) node   (l4) {\textcolor{red}{$x+y$}};
    \draw (1.4,3) node    (l5) {\textcolor{red}{$0$}};
    \draw (-1.4,3) node   (l6) {\textcolor{red}{$0$}};
    \draw (0.8,3.8)  node (l7) {\textcolor{red}{$0$}};
    
    \draw (v0)--(v1);
    \draw (v0)--(v2);
    \draw (v1)--(v4);
    \draw (v2)--(v3);
    \draw (v3)--(v6) ;
    \draw (v4)--(v5);
    \draw (v5)--(v7);
    \draw (v6)--(v7);
    \draw (v0)--(v1) node[midway,right] {\textcolor{blue}{$-x+y$}};
    \draw (v0)--(v2) node[midway,left] {\textcolor{blue}{$x$}};
    \draw (v1)--(v4)node[midway,right] {\textcolor{blue}{$y$}};
    \draw (v2)--(v3) node[midway,left] {\textcolor{blue}{$x+y$}};
    \draw (v3)--(v6) node[midway,left] {\textcolor{blue}{$y$}};
    \draw (v4)--(v5)node[midway,right] {\textcolor{blue}{$x+y$}};
    \draw (v5)--(v7)node[midway,right] {\textcolor{blue}{$x$}};
    \draw (v6)--(v7)node[midway,left] {\textcolor{blue}{$-x+y$}};
\end{scope}
\begin{scope}[xshift=4.4in, scale=1.5]
    \draw  (0,0.2) node[draw] (v0) {$e$};
    \draw  (0.8,1) node[draw]  (v1) {$s_2$};
    \draw  (-0.8,1) node[draw] (v2) {$s_1$};
    \draw  (-0.8,2) node[draw] (v3) {$s_1 s_2$};
    \draw  (0.8,2) node[draw] (v4) {$s_2 s_1$};
    \draw (0.8,3) node[draw] (v5) {$s_2 s_1 s_2$};
    \draw (-0.8,3) node[draw] (v6) {$s_1 s_2s_1$};
    \draw (0,3.8)  node[draw]  (v7) {$s_1 s_2s_1s_2$};

    \draw  (0.3,0.2) node (l0) {\textcolor{red}{$0$}};
    \draw  (1.2,1) node   (l1) {\textcolor{red}{$0$}};
    \draw  (-1.5,1) node  (l2) {\textcolor{red}{$0$}};
    \draw  (-1.6,2) node  (l3) {\textcolor{red}{$-x-y$}};
    \draw  (1.4,2) node   (l4) {\textcolor{red}{$-y$}};
    \draw (1.4,3) node    (l5) {\textcolor{red}{$x$}};
    \draw (-1.75,3) node   (l6) {\textcolor{red}{$-x+y$}};
    \draw (0.8,3.8)  node (l7) {\textcolor{red}{$0$}};
    
    \draw (v0)--(v1);
    \draw (v0)--(v2);
    \draw (v1)--(v4);
    \draw (v2)--(v3);
    \draw (v3)--(v6) ;
    \draw (v4)--(v5);
    \draw (v5)--(v7);
    \draw (v6)--(v7);
    \draw (v0)--(v1) node[midway,right] {\textcolor{blue}{$-x+y$}};
    \draw (v0)--(v2) node[midway,left] {\textcolor{blue}{$x$}};
    \draw (v1)--(v4)node[midway,right] {\textcolor{blue}{$y$}};
    \draw (v2)--(v3) node[midway,left] {\textcolor{blue}{$x+y$}};
    \draw (v3)--(v6) node[midway,left] {\textcolor{blue}{$y$}};
    \draw (v4)--(v5)node[midway,right] {\textcolor{blue}{$x+y$}};
    \draw (v5)--(v7)node[midway,right] {\textcolor{blue}{$x$}};
    \draw (v6)--(v7)node[midway,left] {\textcolor{blue}{$-x+y$}};
\end{scope}
\end{tikzpicture}
\end{center}
Note that $\cl(w,ws_\alpha) = t_\beta$ precisely when $s_\beta w = ws_\alpha$.

\begin{lemma}
    The set of splines is a ring with pointwise multiplication and addition.
\end{lemma}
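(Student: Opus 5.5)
The plan is to realize $\Ms$ as a subset of the product ring $\prod_{w \in I_2(n)} \C[x,y]$ and check that it is a subring. The product ring is commutative with componentwise addition and multiplication, and its identity is the constant function $\spl{1}(w)=1$. So it suffices to show three things: $\Ms$ contains $\spl{0}$ and $\spl{1}$, it is closed under addition and negation, and it is closed under multiplication.

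First, the constants. For $\spl{0}$ and $\spl{1}$, every edge difference $\spl{\rho}(w)-\spl{\rho}(ws_\alpha)$ equals $0$, which lies in every ideal $\llangle t_{w(\alpha)}\rrangle$. More generally, any constant spline $c\cdot\spl{1}$ with $c\in\C[x,y]$ lies in $\Ms$. This also shows that $\C[x,y]$ embeds diagonally, which is consistent with the module structure used later.

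Next, additive closure. Fix $w \in I_2(n)$ and $\alpha \in H$, and write $I = \llangle t_{w(\alpha)}\rrangle$. For $\spl{\rho},\spl{\sigma}\in\Ms$,
\[
(\spl{\rho}\pm\spl{\sigma})(w)-(\spl{\rho}\pm\spl{\sigma})(ws_\alpha) = \bigl(\spl{\rho}(w)-\spl{\rho}(ws_\alpha)\bigr)\pm\bigl(\spl{\sigma}(w)-\spl{\sigma}(ws_\alpha)\bigr),
\]
and both bracketed terms lie in $I$, so the difference does too, because $I$ is an ideal.

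The only step requiring a small trick is multiplicative closure, where I will use the standard add-and-subtract identity
\[
\spl{\rho}(w)\spl{\sigma}(w)-\spl{\rho}(ws_\alpha)\spl{\sigma}(ws_\alpha) = \spl{\rho}(w)\bigl(\spl{\sigma}(w)-\spl{\sigma}(ws_\alpha)\bigr)+\spl{\sigma}(ws_\alpha)\bigl(\spl{\rho}(w)-\spl{\rho}(ws_\alpha)\bigr).
\]
Each summand is a polynomial multiple of an element of $I$, so it lies in $I$. Hence $\spl{\rho}\spl{\sigma}\in\Ms$. The ring axioms (associativity, distributivity, commutativity) are inherited from the product ring, which completes the argument. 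No earlier lemma is needed beyond the definition of $\Ms$.
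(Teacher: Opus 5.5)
Your proof is correct and follows the paper's argument. The paper also checks closure edge by edge, calls the sum case straightforward, and handles products with exactly your add-and-subtract identity; your explicit checks that the constant splines $0$ and $1$ lie in $\Ms$ and that the ring axioms are inherited from $\prod_{w} \C[x,y]$ just fill in details the paper leaves implicit.
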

\begin{proof}
    We may simply check this for two $\spl{\rho},\spl{\sigma} \in \Ms$ at an arbitrary pair $(w,ws_\alpha)$. For $\spl{\rho} + \spl{\sigma}$, this is straightforward. As for the product $\spl{\rho}\spl{\sigma}$, 
    \begin{align*}
        \spl{\rho}(w)\spl{\sigma}(w)- \spl{\rho}(ws_\alpha)\spl{\sigma}(ws_\alpha)&= \spl{\rho}(w)\spl{\sigma}(w) - \spl{\rho}(w)\spl{\sigma}(ws_\alpha) + \spl{\rho}(w)\spl{\sigma}(ws_\alpha) - \spl{\rho}(ws_\alpha)\spl{\sigma}(ws_\alpha) \\
        &= \spl{\rho}(w)\left(\spl{\sigma}(w) - \spl{\sigma}(ws_\alpha)\right) +\spl{\sigma}(ws_\alpha)\left(\spl{\rho}(w)   - \spl{\rho}(ws_\alpha)\right),
    \end{align*}
    and since $\spl{\rho},\spl{\sigma} \in \Ms$, this lies in $\llangle t_{w(\alpha)} \rrangle$.
\end{proof}
\begin{lemma}
    The ring $\Ms$ is graded by degree.
\end{lemma}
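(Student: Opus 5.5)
To prove that $\Ms$ is graded by degree, I would show that $\Ms$ is closed under taking homogeneous components. Then the decomposition $\Ms = \bigoplus_{i\geq 0}(\Ms)_i$ is inherited from the grading of $\prod_{w} \poly$. Here $(\Ms)_i$ is the set of splines $\spl{\rho}$ such that each $\spl{\rho}(w)$ is homogeneous of degree $i$ (or zero).

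For $\spl{\rho}\in\Ms$ and $i\geq 0$, define $\spl{\rho}_i$ by letting $\spl{\rho}_i(w)$ be the degree-$i$ homogeneous component of $\spl{\rho}(w)$. Since $I_2(n)$ is finite, only finitely many $\spl{\rho}_i$ are nonzero, and $\spl{\rho}=\sum_i \spl{\rho}_i$ pointwise. It therefore suffices to show that each $\spl{\rho}_i$ is a spline.

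Fix an edge $(w,ws_\alpha)$ with $\alpha\in H$. By hypothesis,
\[\spl{\rho}(w)-\spl{\rho}(ws_\alpha) = t_{w(\alpha)} g\]
for some $g\in\poly$. The key point is that $t_{w(\alpha)}$ is a homogeneous linear form. Taking the degree-$i$ component of both sides gives
\[\spl{\rho}_i(w)-\spl{\rho}_i(ws_\alpha) = t_{w(\alpha)}\, g_{i-1},\]
where $g_{i-1}$ is the degree-$(i-1)$ component of $g$, taken to be $0$ when $i=0$. So the difference lies in $\llangle t_{w(\alpha)}\rrangle$, and $\spl{\rho}_i\in\Ms$.

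Uniqueness of the decomposition and directness of the sum follow from the corresponding facts in $\prod_w\poly$. Multiplicativity also holds: if $\spl{\rho}\in(\Ms)_i$ and $\spl{\sigma}\in(\Ms)_j$, then the pointwise product has degree $i+j$ at every vertex, so $(\Ms)_i(\Ms)_j\subseteq(\Ms)_{i+j}$. Together with the previous lemma, this makes $\Ms$ a graded ring, and also a graded $\poly$-module.

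There is no real obstacle here. The only point that needs care is that each ideal $\llangle t_{w(\alpha)}\rrangle$ is generated by a homogeneous element, so it is a homogeneous ideal and membership can be checked one component at a time.
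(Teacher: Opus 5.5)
Your proof is correct and takes the same approach as the paper: the edge labels $t_{w(\alpha)}$ are homogeneous linear forms, so each ideal in the spline conditions is homogeneous and membership can be checked one graded component at a time. The only difference is that you write out the componentwise computation and the multiplicativity $(\Ms)_i(\Ms)_j\subseteq(\Ms)_{i+j}$, which the paper leaves implicit.
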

\begin{proof}
    Each label given by $\cl$ is homogeneous of degree $1$. Thus each polynomial ideal in the definition of $\splines{H}$ is homogeneous, and so containment in each polynomial ideal is given individually by each graded piece.
\end{proof}
Some functions from $I_2(n)$ to $\C[x,y]$ are elements of $\Ms$ for all $H$. Let 
\begin{center} 
\begin{tabular}{lll}
    $\spl{\one} \colon I_2(n) \to \poly$ &be $\spl{\one}(w)\coloneqq 1$ & for all $w \in I_2(n)$,\\[3pt]
    $\spl{x}_l \colon I_2(n) \to \C[x,y]$ &be $\spl{x}_l(w) \coloneqq x$ & for all $w\in I_2(n)$,\\[3pt]
    $\spl{y}_l \colon I_2(n) \to \C[x,y]$ &be $\spl{y}_l(w) \coloneqq y$ & for all $w\in I_2(n)$,\\[3pt]
    $\spl{x}_r \colon I_2(n) \to \C[x,y]$ &be $\spl{x}_r(w) \coloneqq w(x)$ & for all $w\in I_2(n)$, and\\[3pt]
    $\spl{y}_r \colon I_2(n) \to \C[x,y]$ &be $\spl{y}_r(w) \coloneqq w(y)$ & for all $w\in I_2(n)$.
\end{tabular}
\end{center}
Checking that $\spl{\one}$, $\spl{x}_{l}$, and $\spl{y}_l$ are splines is trivial, as they are each examples of constant splines. 
As for $\spl{x}_r$ and $\spl{y}_r$, we can check that \[\spl{x}_r(w) - \spl{x}_r(ws_\alpha) = w(x)-ws_\alpha(x) = w(x-s_\alpha(x)).\]
Since $t_\alpha$ divides $x-s_\alpha(x)$ by Lemma~\ref{lem:reflect_divisible}, $t_{w(\alpha)}$ divides $w(x-s_\alpha(x))$. The same occurs for $\spl{y}_r$, and in fact these are special cases of the spline which evaluates to $w(p)$ for a fixed $p\in \mathbb{C}[x,y]$.\par  
The ring $\Ms$ is an infinite-dimensional $\C$-vector space in the natural way, and can also be viewed as a finitely generated graded $\poly$-module in two ways via the following module actions:
\begin{equation}\label{eqn:left_action}
    f\left(x,y\right).\spl{\rho} = f\left(\spl{x}_l,\spl{y}_l\right)\spl{\rho}
\end{equation}
and
\begin{equation}\label{eqn:right_action}
    f\left(x,y\right).\spl{\rho} = f\left(\spl{x}_r,\spl{y}_r\right)\spl{\rho},
\end{equation}
where the right-hand side of both (\ref{eqn:left_action}) and (\ref{eqn:right_action}) work by substituting splines for variables into the polynomial $f$, and  then multiplying as in the ring structure of $\Ms$. For both actions of the polynomial $f(x,y)$, the constant $c = f(0,0)$ is naturally mapped to $c\spl{\one}$. Since $\Ms$ is a $\poly$-submodule of the free module $\prod_{w \in I_2(n)} \poly$ for either module action, it is finitely generated. We call the module action (\ref{eqn:left_action}) the \emph{left action} and the module action (\ref{eqn:right_action}) the \emph{right action} of $\poly$ on $\Ms$. Both the left and right actions are naturally compatible with the grading on $\Ms$. \par
\begin{lemma}
    Any minimal homogeneous (module) generating set for $\Ms$ has the same number of elements in each degree.
\end{lemma}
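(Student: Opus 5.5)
This is the graded Nakayama lemma, applied to either of the two $\poly$-module structures on $\Ms$ (left or right). Let $\mathfrak m = \genby{x,y}$ be the homogeneous maximal ideal of $\poly$. I will show that for any minimal homogeneous generating set $S$ of $\Ms$, the number of elements of $S$ in degree $d$ equals $\dim_\C (\Ms/\mathfrak m\Ms)_d$. This quantity depends only on the module, not on $S$, which proves the claim.

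First, $\mathfrak m\Ms$ is a graded submodule, because $\mathfrak m$ is generated by homogeneous elements and the module action respects the grading (as noted just before the statement). Hence $\overline{M} \coloneqq \Ms/\mathfrak m\Ms$ is a graded $\C = \poly/\mathfrak m$-vector space. Each graded piece $(\Ms)_d$ is finite-dimensional over $\C$, since $\Ms$ sits inside $\prod_{w} \poly$ with $2n$ factors. Therefore each $\overline{M}_d$ is finite-dimensional.

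Next, let $S$ be a homogeneous generating set. The images of $S_d$ span $\overline{M}_d$, because any element of degree $d$ is a $\poly$-combination of generators, and the only contributions not lying in $\mathfrak m\Ms$ come from constant coefficients on generators of degree $d$. So $|S_d| \ge \dim \overline{M}_d$.

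Conversely, suppose the images of $S_d$ are linearly dependent for some $d$. Then some $s \in S_d$ can be written as
\[
s = \sum_{s' \in S_d\setminus\{s\}} c_{s'}\, s' + m, \qquad m \in (\mathfrak m\Ms)_d .
\]
Now $m$ is a combination, with coefficients in $\mathfrak m$, of generators of degree strictly less than $d$. Those generators come from $S$, possibly including $s$ only through terms of degree $<d$. Since the coefficients on $s$ would have to have degree $0$ yet lie in $\mathfrak m$, no such term involving $s$ can occur. Hence $s$ is redundant, and $S\setminus\{s\}$ still generates. This contradicts minimality, so the images of $S_d$ form a basis of $\overline{M}_d$.

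The only delicate point is this last step: we must make sure that the degree-$<d$ generators used to express $m$ do not themselves require $s$. This holds because everything is graded and $\deg s = d$. One can make it clean by a degree induction, or by noting that the subset of $S$ in degrees $<d$ generates $(\Ms)_{<d}$ together with $\mathfrak m\Ms$ in degree $d$.
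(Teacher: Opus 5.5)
Your proof is correct and is essentially the paper's argument. The paper simply invokes the graded Nakayama lemma, or alternatively the quotient by $J\Ms$ with $J=\langle x,y\rangle$; you write that argument out directly by showing $|S_d|=\dim_\C(\Ms/\mathfrak m\Ms)_d$ for every minimal homogeneous generating set $S$. Your closing observation, that $(\mathfrak m\Ms)_d = x(\Ms)_{d-1}+y(\Ms)_{d-1}$ lies in the submodule generated by the generators of degree $<d$, is the clean way to say it, and it makes the muddled sentence about $s$ entering ``only through terms of degree $<d$'' unnecessary.
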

\begin{proof}
    This is a straightforward consequence of the graded Nakayama Lemma \cite{eisenbud}. One may also use the quotient defined at the end of the section to prove this directly, as in \cite[Lemma 3.1]{lesnevich2025splines}. Additionally, we will show later that this module is a free module, from which this also follows directly. 
\end{proof}
So even if $\Ms$ is not a free module over $\C[x,y]$, we can speak meaningfully about its rank-generating function. That being said, by Theorem \ref{thm:flowup} below, $\Ms$ is in fact a free module for all $H$, and has a very particular type of basis.  
\begin{definition}\label{def:upper_triangular}
    Given a labeled graph $\cg$ endowed with a partial order $\preceq$ on its vertices, a spline $\spl{\rho}$ on $\cg$ is \emph{upper triangular} from $w \in \cg$ if $w$ is in the support of $\spl{\rho}$ and $\spl{\rho}(v) = 0$ whenever $w \not\preceq v$. In other words, $\spl{\rho}$ is supported at $w$ and only elements above $w$.

    A set $\{\spl{\rho}_w \mid w \in \cg\}$ of splines on $\cg$ is \emph{poset upper-triangular} if each $\spl{\rho}_w$ is upper triangular from $w$.
\end{definition}
\begin{remark}
    If one takes a linear extension of the poset and writes each spline $\spl{\rho}_w$ in a poset upper-triangular set as the rows in a matrix ordered by that linear extension, then the resulting matrix is an upper-triangular matrix.
\end{remark}
The following is a well-known property for splines in several settings, and is a consequence of Proposition 2.6 in \cite{Harada_Tymoczko_pinball}. We include a proof for completeness.
\begin{lemma}\label{lem:upper_triangular}
    Let $H \subseteq \Phi^+$ be a Hessenberg space. If $R = \{\spl{\rho}_w \mid w \in I_2(n)\}$ is 
    \begin{enumerate}
        \item poset upper-triangular with respect to Bruhat order, and 
        \item $\deg(\spl{\rho}_w) = \hlen(w)$ for all $w \in I_2(n)$,
    \end{enumerate}
    then for both the left and right actions, $R$ is a minimal module generating set for $\Ms$. Moreover, $\Ms$ is free with $R$ as a basis, and the rank polynomial is $\ds \sum_{w \in I_2(n)} q^{\hlen(w)}$.
\end{lemma}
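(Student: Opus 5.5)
Fix a linear extension $w^{(1)},w^{(2)},\dots,w^{(2n)}$ of Bruhat order on $I_2(n)$, and work with either the left or the right action; the argument is identical in both cases. The first step is linear independence. Suppose $\sum_w f_w.\spl{\rho}_w=0$ with not all $f_w$ zero. Choose $v$ to be Bruhat-minimal among the $w$ with $f_w\neq 0$ and evaluate at $v$. For every other $w$ with $f_w\neq0$ we have $w\not\leq v$ by minimality, so $\spl{\rho}_w(v)=0$ by upper triangularity. Hence $f_v(\cdot)\,\spl{\rho}_v(v)=0$ in the domain $\poly$. Under the right action the coefficient becomes $v(f_v)$, which is still nonzero because $v$ is invertible. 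Since $\spl{\rho}_v(v)\neq0$, this is a contradiction.

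The second step, spanning, is the main work. I would first prove a key lemma: if $\spl{\sigma}\in\Ms$ and $\spl{\sigma}(u)=0$ for all $u\not\geq w$, then $\spl{\sigma}(w)$ is divisible by $\prod_{\alpha\in\hinv(w)} t_{w(\alpha)}$. To prove it, take $\alpha\in\hinv(w)$. Then $ws_\alpha$ is shorter than $w$, so $ws_\alpha\not\geq w$ and therefore $\spl{\sigma}(ws_\alpha)=0$. The edge condition then gives $t_{w(\alpha)}\mid\spl{\sigma}(w)$. Distinct reflections $s_\alpha$ give non-proportional linear forms $t_{w(\alpha)}$, so these are pairwise coprime primes and their product divides $\spl{\sigma}(w)$.

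To use the lemma, I also need that $\spl{\rho}_w(w)$ is a nonzero constant multiple of exactly this product. The key lemma applies to $\spl{\rho}_w$ itself, so the product divides $\spl{\rho}_w(w)$. The product has degree $\hlen(w)$, and $\spl{\rho}_w$ is homogeneous of degree $\hlen(w)$ by hypothesis (2), so $\spl{\rho}_w(w)$ equals the product times a nonzero scalar.

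With this in hand, spanning goes by descending induction along the linear extension. Take a homogeneous $\spl{\sigma}\in\Ms$. Let $w$ be minimal in the linear extension among the points where $\spl{\sigma}$ is nonzero, so $\spl{\sigma}$ vanishes at everything earlier. In particular it vanishes at all $u<w$, and it may also vanish at incomparable elements; the key lemma only needs vanishing on the predecessors $ws_\alpha$ with $\alpha\in\hinv(w)$, and these are earlier in the extension. By the key lemma, $\spl{\sigma}(w)=g\cdot\spl{\rho}_w(w)$ for some homogeneous $g$. For the right action, replace $g$ by $w^{-1}(g)$. Then $\spl{\sigma}-g.\spl{\rho}_w$ is supported strictly later in the linear extension, because $\spl{\rho}_w$ vanishes on everything not $\geq w$, which includes everything before $w$. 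Since $I_2(n)$ is finite, iterating this terminates at $0$ and expresses $\spl{\sigma}$ in terms of $R$. The same argument with degrees tracked shows the coefficients are homogeneous.

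Freeness follows from independence plus spanning. Minimality holds because a basis of a graded free module is a minimal generating set, via the graded Nakayama argument of the earlier lemma. The rank polynomial is then read off from the degrees $\hlen(w)$.
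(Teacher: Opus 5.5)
Your proposal is correct and follows essentially the same triangular-elimination argument as the paper. Independence comes from evaluating at a Bruhat-minimal index with nonzero coefficient. Spanning comes from repeatedly subtracting $g\,\spl{\rho}_w$ at the first nonzero vertex of a linear extension, using that $\spl{\rho}_w(w)$ is a nonzero scalar multiple of $\prod_{\alpha\in\hinv(w)}t_{w(\alpha)}$ by the degree count, with the $w^{-1}(g)$ twist for the right action.

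One small simplification: minimality with respect to inclusion already follows directly from linear independence, so graded Nakayama is not needed.
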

\begin{proof}
First we prove linear independence. Suppose
$\sum_w f_w\spl{\rho}_w=0$, with $f_w\in\poly$, and choose $w$
minimal in Bruhat order among those with $f_w\neq 0$. Evaluating at $w$,
every term except $f_w\spl{\rho}_w(w)$ vanishes. Indeed, if
$v< w$, then $f_v=0$ by minimality, while if $v\nleq w$, then
$\spl{\rho}_v(w)=0$ by assumption. Thus
$f_w\spl{\rho}_w(w)=0$. Since $\poly$ is a domain and $\spl{\rho}_w(w)\neq 0$, this forces $f_w=0$, a contradiction.
For the right action, evaluation at $w$ instead gives $w(f_w)\spl{\rho}_w(w)=0$; since $w$ acts on $\poly$ by an automorphism, this likewise forces $f_w=0$.

Now let $\spl{\rho}\in\Ms$. We express $\spl{\rho}$ as a
$\poly$-linear combination of the $\spl{\rho}_w$ by induction along a
linear extension of Bruhat order, from smaller elements to larger elements.
At the step indexed by $w$, assume that the current residual spline vanishes
at every $v< w$. For each $\gamma\in H$, the graph $\cg_H$ contains the edge $(w, w s_\gamma)$ with label
$t_{w(\gamma)}$. If $ws_\gamma < w$, the spline condition gives
$t_{w(\gamma)}\mid \spl{\rho}(w)$. The forms $t_{w(\gamma)}$, for
$\gamma \in H$, are pairwise relatively prime, so
$\prod_{\gamma \in \hinv(w)}t_{w(\gamma)}$ divides $\spl{\rho}(w)$. The same argument applies to $\spl{\rho}_w(w)$, and since this is also the same degree as $\spl{\rho}_w$, this product is a constant multiple of
$\spl{\rho}_w(w)$. Hence $\spl{\rho}_w(w)$ divides
$\spl{\rho}(w)$, so $g_w=\spl{\rho}(w)/\spl{\rho}_w(w)$ lies in $\poly$.

Replace the current residual spline $\spl{\rho}$ by
$\spl{\rho}-g_w\spl{\rho}_w$ for the left action or $\spl{\rho}-w^{-1}(g_w)\spl{\rho}_w$ for the right action. The new residual vanishes at $w$. It also
still vanishes at every $v< w$, because
$\spl{\rho}_w(v)=0$ unless $w\leq v$. Continuing through the finite
poset expresses the original spline as a $\poly$-linear combination of $R$.

The rank-polynomial calculation follows directly from the degree of each spline in $R$.
\end{proof}
\begin{definition}\label{def:flowup}
    A basis of splines satisfying the conditions in Lemma \ref{lem:upper_triangular} is called a \emph{flow-up basis}. The elements of such a basis are called \emph{flow-up splines}. If $w$ is the minimal element in the support of a flow-up spline, then we call it a flow-up spline at $w$.
\end{definition}
Furthermore, $\Ms$ has the structure of an $I_2(n)$-module. This was defined for the symmetric group by Tymoczko in \cite{tymoczko2008permutation}.
\begin{definition}
    Let $\spl{\rho} \in \Ms$. The \emph{dot action} of $I_2(n)$ on $\Ms$ is given by 
    \[
    (w\cdot \spl{\rho})(v) \coloneqq w(\spl{\rho}(w^{-1}v))
    \]
    for $w,v \in I_2(n)$.
\end{definition}
\begin{lemma}\label{lem:dot_action_preserves_splines} If $\spl{\rho}$ is a spline in $\Ms$ then $w \cdot \spl{\rho}$ is as well. In other words, the dot action is well-defined.
\end{lemma}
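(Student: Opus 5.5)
The plan is to verify the spline condition directly on each edge of $\cg_H$, using two facts: the group acts on $\poly$ by ring automorphisms, and the labels transform equivariantly. Fix $u \in I_2(n)$ and $\spl{\rho}\in\Ms$; we must show that for every $v \in I_2(n)$ and $\gamma \in H$,
\[
(u\cdot\spl{\rho})(v) - (u\cdot\spl{\rho})(vs_\gamma) \in \llangle t_{v(\gamma)} \rrangle .
\]

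First, I unwind the definition. The left-hand side is
\[
u\bigl(\spl{\rho}(u^{-1}v)\bigr) - u\bigl(\spl{\rho}(u^{-1}vs_\gamma)\bigr) = u\bigl(\spl{\rho}(w) - \spl{\rho}(ws_\gamma)\bigr), \qquad w \coloneqq u^{-1}v .
\]
Here I use that the action of $u$ on $\poly$, defined by extending the linear map on variables multiplicatively and linearly, is a ring homomorphism, and in particular additive.

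Second, the spline condition for $\spl{\rho}$ on the edge $(w, ws_\gamma)$ gives $\spl{\rho}(w)-\spl{\rho}(ws_\gamma) = t_{w(\gamma)}\, g$ for some $g \in \poly$. Applying $u$ and using multiplicativity yields $u(t_{w(\gamma)})\,u(g)$.

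Third, I check that $u(t_{w(\gamma)}) = t_{uw(\gamma)} = t_{v(\gamma)}$. This is the one point needing care. The identification $\alpha \mapsto t_\alpha$ is linear and sends $e_i \mapsto t_i$, and the action on linear forms is defined by $X(t_i) = t_{X(e_i)}$. By linearity, $X(t_\alpha) = t_{X\alpha}$ for every vector $\alpha$. So the linear form attached to a root transforms exactly as the root does, which gives the claimed identity with $uw = v$.

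Hence the difference lies in $\llangle t_{v(\gamma)}\rrangle$, so $u\cdot\spl{\rho}\in\Ms$. I would also remark that $e\cdot\spl{\rho}=\spl{\rho}$ and $(u_1u_2)\cdot\spl{\rho} = u_1\cdot(u_2\cdot\spl{\rho})$ follow because $u\mapsto$ (action on $\poly$) is a homomorphism, which is again immediate from linearity on generators. The only potential obstacle is the equivariance of labels in the third step, and it reduces to the linearity of $\alpha\mapsto t_\alpha$.
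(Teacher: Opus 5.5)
Your proof is correct and is the same argument as the paper's. Both pull $u$ out of the difference because it acts on $\poly$ by a ring automorphism, then use $u(t_{w(\gamma)})=t_{uw(\gamma)}=t_{v(\gamma)}$ to land in $\langle t_{v(\gamma)}\rangle$. Your check that $X(t_\alpha)=t_{X\alpha}$ follows from linearity spells out the step the paper compresses into $w\langle t_{w^{-1}v(\alpha)}\rangle=\langle t_{v(\alpha)}\rangle$.
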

\begin{proof}
    For all $(v,vs_\alpha) \in \ce_H$, 
\begin{align*}
(w \cdot \spl{\rho})(v) - (w \cdot \spl{\rho})(vs_\alpha) &= w\spl{\rho}(w^{-1}v)- w\spl{\rho}(w^{-1}v s_\alpha) \\
&= w(\spl{\rho}(w^{-1}v)- \spl{\rho}(w^{-1} vs_\alpha)) \\
&\in w\llangle t_{w^{-1}v(\alpha)} \rrangle \\
&= \llangle t_{v(\alpha)}\rrangle \qedhere.
\end{align*}
\end{proof}

Finally, consider the irrelevant ideal $J = \llangle x,y \rrangle \subset \C[x,y]$. The submodule $J\Ms$ is homogeneous and $I_2(n)$-equivariant. So 
\begin{equation}\label{eqndef:left_quotient}
    \lqot{H} = \bigoplus_{i\geq 0} \left(\lqot{H}\right)_i \coloneqq \faktor{\splines{H}}{J\Ms} = \faktor{\Ms}{\llangle \spl{x}_l,\spl{y}_l\rrangle}
\end{equation}
with the left action and
\begin{equation}\label{eqndef:right_quotient}
    \rqot{H} = \bigoplus_{i\geq 0} \left(\rqot{H}\right)_i \coloneqq \faktor{\splines{H}}{J \Ms} = \faktor{\Ms}{\llangle \spl{x}_r,\spl{y}_r\rrangle}
\end{equation}
with the right action are each graded $\C[x,y]/J \cong \C$-vector spaces. Since $J\Ms$ is $I_2(n)$-equivariant, each of these vector spaces carries an $I_2(n)$ representation given by $w.[\spl{\rho}] = [w \cdot \spl{\rho}]$.
This representation is different with respect to the left and right module action. This is relatively easy to see in an example, as $\spl{x}_l \sim 0$ in the quotient under the left action but not the right, but $\spl{x}_r \sim 0$ in the quotient under the right action but not the left. The dot action fixes $\spl{x}_r$ but does not fix $\spl{x}_l$, and so the left and right quotients interact differently with the dot action.

\section{The Schubert case}\label{sec:schubert}

In this section we specialize to the full reflection graph, so $H=\Phi^+$.
The goal is to construct a Schubert basis for $\splines{\Phi^+}$, prove a
Borel presentation for this spline module, and then use that presentation to
identify the left and right quotient representations. We define the Schubert
splines using divided differences, prove their flow-up triangularity directly
from the rank-two Bruhat order, and then derive the basis theorem by
triangular elimination.

The defining spline condition was stated using right reflection edges. For
the Schubert construction it is more convenient to use left reflection edges.
The following lemma records that, in the full graph, these two descriptions are
equivalent; the only difference is a possible sign in the edge label, which
does not change the corresponding principal ideal.

\begin{lemma}\label{lem:left_edges_full_graph}
For purposes of the spline conditions, the full reflection graph may be read
using left reflection edges $w\leftrightarrow s_\alpha w$, with label
$t_\alpha$. Equivalently,
\begin{equation}\label{eq:left_spline_condition_full}
    \splines{\Phi^+}
    =
    \setof{
        \spl{\rho}\colon I_2(n)\to\poly
        \st
        \spl{\rho}(w)-\spl{\rho}(s_\alpha w)
            \in \llangle t_\alpha\rrangle
        \text{ for all } w\in I_2(n),\ \alpha\in\Phi^+
    }.
\end{equation}
\end{lemma}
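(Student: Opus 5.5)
The plan is to show that the two edge sets coincide, together with their labels up to sign. Fix $w\in I_2(n)$ and $\alpha\in\Phi^+$. The right edge $(w,ws_\alpha)$ carries the label $t_{w(\alpha)}$. We will use the standard conjugation identity $ws_\alpha w^{-1}=s_{w(\alpha)}$, which can be checked directly from the reflection formula because $w$ preserves the bilinear form:
\[
w s_\alpha w^{-1}(v) = v - \frac{2\langle w^{-1}v,\alpha\rangle}{\langle\alpha,\alpha\rangle}w(\alpha) = v-\frac{2\langle v,w(\alpha)\rangle}{\langle w(\alpha),w(\alpha)\rangle}w(\alpha).
\]
From this identity we get $ws_\alpha = s_{w(\alpha)}w$, matching the remark just before the lemma.

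Next, let $\beta\in\Phi^+$ be whichever of $\pm w(\alpha)$ is positive; it exists because $\Phi=\Phi^+\sqcup\Phi^-$ and $\Phi^-=-\Phi^+$. Since $s_{-\gamma}=s_\gamma$, the right edge $(w,ws_\alpha)$ is exactly the left edge $(w,s_\beta w)$. Moreover, $t_{w(\alpha)}=\pm t_\beta$, because $t$ is linear, so the principal ideals $\langle t_{w(\alpha)}\rangle$ and $\langle t_\beta\rangle$ are equal.

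For the converse, take a left edge $(w,s_\beta w)$ with $\beta\in\Phi^+$ and set $\alpha'=w^{-1}(\beta)\in\Phi$. Then $s_\beta w = w s_{\alpha'}$, again by the conjugation identity. Let $\alpha$ be whichever of $\pm\alpha'$ is positive; then $s_\beta w = ws_\alpha$, and $w(\alpha)=\pm\beta$. So every left edge is a right edge with the same label up to a unit.

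Combining the two directions, as $\alpha$ ranges over $\Phi^+=H$ the maps $\alpha\mapsto\beta$ (for fixed $w$) give a bijection between right and left reflection edges at $w$, and the corresponding divisibility conditions are identical. Hence the two defining conditions cut out the same set of functions, which gives~\eqref{eq:left_spline_condition_full}.

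The only real point needing care is that $H=\Phi^+$ is closed under the sign normalization. For a general Hessenberg space $H$, the positive root $\beta=\pm w(\alpha)$ need not lie in $H$, so this argument is genuinely specific to the full graph. There is no substantial obstacle beyond checking the conjugation identity.
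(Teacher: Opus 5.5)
Your proof is correct and follows essentially the same route as the paper. For each $w$ and $\alpha\in\Phi^+$, you identify the right edge $w\leftrightarrow ws_\alpha$ with the left edge $w\leftrightarrow s_\beta w$, where $\beta$ is the positive root in $\{\pm w(\alpha)\}$, observe that $t_{w(\alpha)}=\pm t_\beta$ generates the same principal ideal, and then run the converse. The only difference is that you check the conjugation identity $ws_\alpha w^{-1}=s_{w(\alpha)}$ directly from the reflection formula and write out both directions. The paper takes the identity as standard, starts from the left edge, and says the reverse direction follows by the same argument.
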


\begin{proof}
The right-hand side of \eqref{eq:left_spline_condition_full} uses left
reflection edges. We compare these with the right reflection edges in the
definition of $\splines{\Phi^+}$.

Fix $w\in I_2(n)$ and $\alpha\in\Phi^+$. Choose the positive root
$\beta$ such that $w(\beta)=\pm\alpha$. Since reflections depend only on
the root line, $s_\alpha w=ws_\beta$. Thus the left edge
$w\leftrightarrow s_\alpha w$ is the same unoriented edge as the right edge
$w\leftrightarrow ws_\beta$. The label on this right edge is
$t_{w(\beta)}=\pm t_\alpha$, so it generates the same
ideal as $t_\alpha$. Hence the
right-edge spline condition for $w\leftrightarrow ws_\beta$ is exactly the
left-edge condition
$\spl{\rho}(w)-\spl{\rho}(s_\alpha w)\in\llangle t_\alpha\rrangle$.

The same argument also converts every right reflection edge into a left
reflection edge with the same label up to sign. Therefore the two collections of
divisibility conditions are identical.
\end{proof}

\begin{lemma}\label{lem:divided_difference_spline}
Let $\spl{\rho}\in\splines{\Phi^+}$ and let $\alpha\in\Phi^+$. Define
\begin{equation}\label{eq:divided_difference_spline}
    \partial_\alpha\spl{\rho}(v)
        \deftobe
        \frac{\spl{\rho}(v)-s_\alpha\bigl(\spl{\rho}(s_\alpha v)\bigr)}
             {t_\alpha}.
\end{equation}
Then $\partial_\alpha\spl{\rho}\in\splines{\Phi^+}$.
\end{lemma}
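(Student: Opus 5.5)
The plan is to first show that the quotient in \eqref{eq:divided_difference_spline} is a genuine polynomial, and then verify the spline condition edge by edge, using the left-edge description of Lemma~\ref{lem:left_edges_full_graph}.

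\emph{Well-definedness.} Write the numerator as
\[
\bigl(\spl{\rho}(v)-\spl{\rho}(s_\alpha v)\bigr)+\bigl(\spl{\rho}(s_\alpha v)-s_\alpha(\spl{\rho}(s_\alpha v))\bigr).
\]
The first bracket lies in $\llangle t_\alpha\rrangle$ by \eqref{eq:left_spline_condition_full}. The second lies there by Lemma~\ref{lem:reflect_divisible}. So $\partial_\alpha\spl{\rho}(v)\in\poly$.

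\emph{Spline condition.} Fix $v$ and $\beta\in\Phi^+$; we must show $t_\beta$ divides $D=\partial_\alpha\spl{\rho}(v)-\partial_\alpha\spl{\rho}(s_\beta v)$. Multiply through by $t_\alpha$:
\[
t_\alpha D=\bigl[\spl{\rho}(v)-\spl{\rho}(s_\beta v)\bigr]-s_\alpha\bigl[\spl{\rho}(s_\alpha v)-\spl{\rho}(s_\alpha s_\beta v)\bigr].
\]
The first bracket is divisible by $t_\beta$. For the second, rewrite $s_\alpha s_\beta v=s_{\gamma}s_\alpha v$ with $\gamma=s_\alpha(\beta)$, which is $\pm$ a positive root. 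The bracket is then divisible by $t_\gamma$, and applying $s_\alpha$ turns this into divisibility by $s_\alpha(t_\gamma)=\pm t_\beta$. Hence $t_\beta\mid t_\alpha D$. When $\beta\neq\alpha$, the linear forms $t_\alpha,t_\beta$ are coprime (distinct root lines), so $t_\beta\mid D$.

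\emph{The case $\beta=\alpha$.} This case needs a separate argument, and is the main point. Here $s_\beta v=s_\alpha v$. A direct computation gives
\[
\partial_\alpha\spl{\rho}(s_\alpha v)=\frac{\spl{\rho}(s_\alpha v)-s_\alpha\spl{\rho}(v)}{t_\alpha}.
\]
Since $s_\alpha(t_\alpha)=-t_\alpha$, this equals $s_\alpha\bigl(\partial_\alpha\spl{\rho}(v)\bigr)$. So $D=f-s_\alpha(f)$ with $f=\partial_\alpha\spl{\rho}(v)$, and this is divisible by $t_\alpha$ by Lemma~\ref{lem:reflect_divisible}.

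This completes the verification that $\partial_\alpha\spl{\rho}\in\splines{\Phi^+}$. Homogeneity, of degree one less, is automatic and can be recorded along the way.
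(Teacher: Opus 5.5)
Your proof is correct and follows essentially the same route as the paper, which proves this statement through the more general Lemma~\ref{lem:generalized_divided_difference}. Both arguments use the same decomposition of the numerator to show the quotient is a polynomial, the same coprimality argument when the edge root differs from $\alpha$, and the same identity $\partial_\alpha\spl{\rho}(s_\alpha v)=s_\alpha\bigl(\partial_\alpha\spl{\rho}(v)\bigr)$ for the $\alpha$-edge; you differ only cosmetically, by working with left edges and verifying inline the divisibility that the paper obtains from Lemma~\ref{lem:dot_action_preserves_splines} (that $s_\alpha\cdot\spl{\rho}$ is again a spline).
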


\begin{proof}
    This operation is called the \emph{divided difference} operator and is well known to preserve $\splines{\Phi^+}$ for $I_2(n)$ and other finite Coxeter groups (including Weyl groups). We prove a slightly more general version of this in Lemma \ref{lem:generalized_divided_difference}.
\end{proof}

We now define the Schubert splines. Set
$\spl{\sigma}_e=\spl{\one}$. At the top of Bruhat order, set
\begin{equation}\label{eq:top_schubert_spline}
    \spl{\sigma}_{w_0}(v)
        \deftobe
        \begin{dcases*}
            \prod_{\alpha\in\Phi^+} t_\alpha, & if \(v=w_0\),\\
            0, & otherwise.
        \end{dcases*}
\end{equation}
This is a spline: all edge differences away from $w_0$ are zero, and every
edge incident to $w_0$ has label $t_\alpha$ for some $\alpha\in\Phi^+$,
which divides the product in \eqref{eq:top_schubert_spline}.

For $w\notin\{e,w_0\}$, let $\alpha\in\{\alpha_1,\alpha_2\}$ be the unique
simple root such that $\ell(s_\alpha w)=\ell(w)+1$, whose existence and
uniqueness are given by Lemma~\ref{lem:rank_two_bruhat_facts}(1), and define
\begin{equation}\label{eq:schubert_recursion}
    \spl{\sigma}_w
        \deftobe
        \partial_\alpha \bigl(\spl{\sigma}_{s_\alpha w}\bigr) .
\end{equation}
This is a downward recursion from $w_0$ for all elements except $e$. The
identity class is defined separately as the constant spline, so no
well-definedness issue arises at the bottom rank. By
Lemma~\ref{lem:divided_difference_spline}, every $\spl{\sigma}_w$ is a
spline. The operator in \eqref{eq:divided_difference_spline} lowers polynomial
degree by one, so the recursion is compatible with the expected grading; the
next proposition proves the nonvanishing and triangularity needed for the basis
theorem.

\begin{proposition}\label{prop:schubert_flowup}
For every $w\in I_2(n)$, the spline $\spl{\sigma}_w$ is homogeneous of
degree $\ell(w)$. It is supported on the Bruhat upper order ideal of $w$,
and its diagonal value is
\begin{equation}\label{eq:schubert_diagonal_value}
    \spl{\sigma}_w(w)=\prod_{\alpha\in N_L(w)}t_\alpha .
\end{equation}
Equivalently, $\spl{\sigma}_w(v)=0$ unless $w\leq v$.
\end{proposition}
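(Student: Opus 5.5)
We argue by downward induction on $\ell(w)$, starting from $w_0$ and running the recursion \eqref{eq:schubert_recursion}; the identity $e$ is handled separately. For $w_0$ everything is immediate from \eqref{eq:top_schubert_spline}. The spline $\spl{\sigma}_{w_0}$ is homogeneous of degree $\card{\Phi^+}=n=\ell(w_0)$. Its support is $\{w_0\}$, which is the Bruhat upper ideal of $w_0$. Finally, $N_L(w_0)=\Phi^+$ by Lemma~\ref{lem:rank_two_bruhat_facts}(3), since $\card{N_L(w_0)}=n$. For $e$ the statement is trivial: $\spl{\one}$ has degree $0$, the upper ideal of $e$ is all of $I_2(n)$, and $N_L(e)=\emptyset$, so the diagonal value is the empty product $1$.

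For the inductive step, fix $w\notin\{e,w_0\}$. Let $\alpha$ be the simple root with $\ell(s_\alpha w)=\ell(w)+1$, and put $u=s_\alpha w$. Since $\ell(u)>\ell(w)$, the induction hypothesis applies to $\spl{\sigma}_u$.

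\emph{Homogeneity.} If $\spl{\sigma}_u$ is homogeneous of degree $\ell(w)+1$, then the numerator in \eqref{eq:divided_difference_spline} is homogeneous of that degree, because $s_\alpha$ preserves degree. Dividing by $t_\alpha$ gives degree $\ell(w)$. (If the numerator vanished at some $v$, the value there is $0$, which is harmless.)

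\emph{Support.} Take $v$ with $w\not\le v$. We need both $\spl{\sigma}_u(v)=0$ and $\spl{\sigma}_u(s_\alpha v)=0$, that is, $u\not\le v$ and $u\not\le s_\alpha v$. Here we use Lemma~\ref{lem:rank_two_bruhat_facts}(2). Because $0<\ell(w)<n$, the condition $w\not\le v$ means $v\ne w$ and $\ell(v)\le\ell(w)$. Then $\ell(v)<\ell(u)$, so $u\not\le v$. Also $\ell(s_\alpha v)\le\ell(w)+1=\ell(u)$, with equality only if $\ell(v)=\ell(w)$ and $s_\alpha$ raises $v$. In that equality case, $s_\alpha v=u$ would force $v=w$, which is excluded. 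So $s_\alpha v$ has the same length as $u$ but is a different element, and since $0<\ell(u)\le n$ and $u\ne w_0$ or lengths differ, Lemma~\ref{lem:rank_two_bruhat_facts}(2) again gives $u\not\le s_\alpha v$. The one delicate point is when $\ell(u)=n$, so $u=w_0$. Then $s_\alpha v$ has length $n$ only if $s_\alpha v=w_0=u$, which gives $v=w$, again excluded. I expect this careful use of the length criterion, including the edge case at $w_0$, to be the only real obstacle.

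\emph{Diagonal value.} At $v=w$ we have $\spl{\sigma}_u(w)=0$, because $u\not\le w$ for length reasons. Hence
\[
\spl{\sigma}_w(w)=-\frac{s_\alpha\bigl(\spl{\sigma}_u(u)\bigr)}{t_\alpha}.
\]
By induction, $\spl{\sigma}_u(u)=\prod_{\beta\in N_L(u)}t_\beta$. By Lemma~\ref{lem:rank_two_bruhat_facts}(3), $N_L(u)=\{\alpha\}\sqcup s_\alpha N_L(w)$. Applying $s_\alpha$ then gives
\[
s_\alpha\bigl(\spl{\sigma}_u(u)\bigr)=(-t_\alpha)\prod_{\beta\in N_L(w)}t_{\pm\beta}.
\]
Here the signs arise because $s_\alpha$ maps each $s_\alpha\beta$ to $\beta$, possibly up to sign if $s_\alpha\beta$ is recorded as a positive root. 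The result is \eqref{eq:schubert_diagonal_value} up to a nonzero scalar. I would either track these signs explicitly, noting that $s_\alpha$ permutes $\Phi^+\setminus\{\alpha\}$ so no signs occur, or read the equality up to a unit as the paper does for edge labels. In either case the diagonal value is nonzero. This also confirms that the degree equals $\ell(w)$ and that $\spl{\sigma}_w$ is not identically zero.
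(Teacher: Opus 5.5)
Your proof is correct and is essentially the paper's argument: downward induction from $w_0$, the support claim via the length criterion of Lemma~\ref{lem:rank_two_bruhat_facts}(2), and the diagonal value via Lemma~\ref{lem:rank_two_bruhat_facts}(3). On the sign question, commit to the exact version, as the paper does. Lemma~\ref{lem:rank_two_bruhat_facts}(3) already places $s_\alpha N_L(w)$ inside $\Phi^+$, and $s_\alpha(t_\gamma)=t_{s_\alpha\gamma}$, so $s_\alpha$ sends $t_\alpha\prod_{\beta\in N_L(w)}t_{s_\alpha\beta}$ exactly to $-t_\alpha\prod_{\beta\in N_L(w)}t_\beta$; the ``up to a unit'' fallback is therefore unnecessary, and it would not give the stated equality.
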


\begin{proof}
The case $w=e$ is immediate. For the remaining elements, we prove the result
by downward induction on $\ell(w)$. The base case $w=w_0$ is immediate:
the support and degree follow from \eqref{eq:top_schubert_spline}, and
$N_L(w_0)=\Phi^+$.

Now let $w\notin\{e,w_0\}$, and let $\alpha\in\{\alpha_1,\alpha_2\}$ be
the unique simple root such that $\ell(s_\alpha w)=\ell(w)+1$, as in
Lemma~\ref{lem:rank_two_bruhat_facts}(1). The recursion gives
$\spl{\sigma}_w=\partial_\alpha(\spl{\sigma}_{s_\alpha w})$, so homogeneity follows
from the induction hypothesis because the divided-difference operator lowers
polynomial degree by one.

We next prove the support statement. Let $v\in I_2(n)$ and suppose
$w\nleq v$. Since $0<\ell(w)<n$, Lemma~\ref{lem:rank_two_bruhat_facts}(2)
shows that either $\ell(v)<\ell(w)$, or $\ell(v)=\ell(w)$ and $v\neq w$.

If $\ell(v)<\ell(w)$, then $s_\alpha w\nleq v$ and
$s_\alpha w\nleq s_\alpha v$, since
$\ell(s_\alpha v)\leq \ell(w)<\ell(s_\alpha w)$. If
$\ell(v)=\ell(w)$ and $v\neq w$, then again $s_\alpha w\nleq v$. Also $s_\alpha w\nleq s_\alpha v$. Indeed, if
$\ell(s_\alpha v)<\ell(s_\alpha w)$, this follows because Bruhat order is
graded by length. If $\ell(s_\alpha v)=\ell(s_\alpha w)$, then any
comparability would force equality, and $s_\alpha v=s_\alpha w$ would force
$v=w$, contrary to the case assumption. Thus in all cases $s_\alpha w\nleq v$ and
$s_\alpha w\nleq s_\alpha v$. Since
$\ell(s_\alpha w)=\ell(w)+1$, the induction hypothesis applies to
$\spl{\sigma}_{s_\alpha w}$, and gives
$\spl{\sigma}_{s_\alpha w}(v)=
\spl{\sigma}_{s_\alpha w}(s_\alpha v)=0$. Since
$\spl{\sigma}_w=\partial_\alpha(\spl{\sigma}_{s_\alpha w})$, the definition of the
divided-difference operator in \eqref{eq:divided_difference_spline} gives
$\spl{\sigma}_w(v)=0$.

It remains to compute the diagonal value. Since $s_\alpha w\nleq w$, the
support statement for $\spl{\sigma}_{s_\alpha w}$ gives
$\spl{\sigma}_{s_\alpha w}(w)=0$. Hence
\[
    \spl{\sigma}_w(w)
    =
    -\frac{s_\alpha(\spl{\sigma}_{s_\alpha w}(s_\alpha w))}{t_\alpha}.
\]
By the induction hypothesis and Lemma \ref{lem:rank_two_bruhat_facts}(3),
\[
    \spl{\sigma}_{s_\alpha w}(s_\alpha w)
        =
    t_\alpha\prod_{\beta\in N_L(w)}t_{s_\alpha\beta}.
\]
Applying $s_\alpha$ gives
$-t_\alpha\prod_{\beta\in N_L(w)}t_\beta$, and division by $-t_\alpha$
proves \eqref{eq:schubert_diagonal_value}.
\end{proof}
The Schubert splines for $I_2(n)$ have a particularly nice formula that, for geometric cases, is a consequence of a geometric condition called \emph{rational smoothness}. This is distinct from Billey's formula for Schubert splines, which is a sum over reduced subwords of a fixed reduced expression \cite{billey1999schub_formula}. The following is easily obtained by the same logic that appears in \cite{Carrell_Pete,Richmond_Slofstra,Elias_Williamson} in much greater generality, and agrees with the geometric cases in full Kac-Moody generality \cite{Kumar_KacMoody}, but could also be proven using our Corollary  \ref{cor:rationally_smooth} and has an even easier direct proof, which we include here.

\begin{theorem}\label{thm:schubert_rational}
    The Schubert splines are given explicitly by 
    \[
    \spl{\sigma}_w(v) = c^w(v) \prod_{\substack{s_\gamma v < v \\ s_\gamma v \not\geq w}} t_\gamma
    \]
    where $c^w(v) \in \C^*$ if $w \leq v$ and $c^w(v) = 0$ otherwise.
\end{theorem}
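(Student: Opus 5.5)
The plan is to prove the formula in three steps. First, show that the displayed product divides $\spl{\sigma}_w(v)$. Second, show that the product has degree exactly $\ell(w)$ whenever $w\leq v$, so that the quotient is a constant. Third, show that this constant is nonzero, by induction on $\ell(v)$. The case $v\not\geq w$ is already Proposition~\ref{prop:schubert_flowup}, which gives $c^w(v)=0$. The cases $w=e$ and $w=w_0$ are direct: for $w=e$ the index set is empty because every $u$ satisfies $u\geq e$, and $w_0$ is given by \eqref{eq:top_schubert_spline}. So assume $0<\ell(w)<n$ and $w\leq v$.

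\emph{Divisibility.} By Lemma~\ref{lem:left_edges_full_graph}, $v$ is joined to $s_\gamma v$ by an edge labeled $t_\gamma$. Suppose $s_\gamma v\not\geq w$. Then Proposition~\ref{prop:schubert_flowup} gives $\spl{\sigma}_w(s_\gamma v)=0$, so $t_\gamma$ divides $\spl{\sigma}_w(v)$. Distinct positive roots give pairwise non-associate linear forms. Hence the full product over $\{\gamma : s_\gamma v<v,\ s_\gamma v\not\geq w\}$ divides $\spl{\sigma}_w(v)$. (The condition $s_\gamma v<v$ is automatic once $s_\gamma v\not\geq w$ and $w\leq v$, but we keep it as stated.)

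\emph{Degree count.} The map $\gamma\mapsto s_\gamma v$ is a bijection from $\Phi^+$ onto the $n$ elements whose length parity is opposite to that of $v$. Write $k=\ell(w)\leq m=\ell(v)$. By Lemma~\ref{lem:rank_two_bruhat_facts}(2), $u\not\geq w$ means either $\ell(u)<k$, or $\ell(u)=k$ with $u\neq w$. Recall that length $0$ contains one element and each length $1,\dots,n-1$ contains two.
\begin{itemize}
\item If $k\equiv m \pmod 2$, the qualifying $u$ are those of lengths $k-1,k-3,\dots$. There are exactly $k$ of them.
\item If $k\not\equiv m \pmod 2$, the qualifying $u$ are those of lengths $k-2,k-4,\dots$, together with the single element of length $k$ other than $w$. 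This again gives $k$ elements.
\end{itemize}
So the product has degree $k=\deg\spl{\sigma}_w$. Since $\spl{\sigma}_w$ is homogeneous, $\spl{\sigma}_w(v)$ is a scalar multiple $c^w(v)$ of the product.

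\emph{Nonvanishing (the main step).} I expect this to be the real obstacle. We prove $c^w(v)\neq 0$ by induction on $\ell(v)$, with base case $v=w$ given by \eqref{eq:schubert_diagonal_value}. Now let $m=\ell(v)>k$ and choose any $u$ with $\ell(u)=m-1$ and $u\geq w$. If $m-1=k$ take $u=w$; otherwise Lemma~\ref{lem:rank_two_bruhat_facts}(2) shows both elements of length $m-1$ lie above $w$. Since $u$ has parity opposite to $v$, we can write $u=s_\gamma v$. By induction $\spl{\sigma}_w(u)$ is a nonzero multiple of $\prod t_\delta$, taken over $\delta$ with $s_\delta u<u$. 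Here $s_\gamma u=v>u$, so $\gamma$ is not among these $\delta$, and by unique factorization $t_\gamma\nmid\spl{\sigma}_w(u)$. The spline condition gives $\spl{\sigma}_w(v)\equiv\spl{\sigma}_w(u)\pmod{t_\gamma}$. Therefore $\spl{\sigma}_w(v)\neq 0$, which completes the induction.
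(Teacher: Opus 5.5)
Your proposal is correct and follows essentially the same route as the paper's proof. Divisibility comes from the vanishing of $\spl{\sigma}_w$ off the upper ideal. The degree count shows that exactly $\ell(w)$ reflection edges leave the ideal at each vertex: you enumerate the complement by length, while the paper counts the $n-\ell(w)$ neighbors inside the ideal. Nonvanishing then propagates along edges inside the ideal, and your induction on $\ell(v)$ is a concrete instance of the paper's connectedness argument.

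One small wording fix: in your inductive step, the product for $\spl{\sigma}_w(u)$ runs only over $\delta$ with $s_\delta u<u$ and $s_\delta u\not\geq w$, not over all descents of $u$. This smaller index set is still a subset of the descents, which is all your argument needs.
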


\begin{proof}
    The cases $w=e$ and $w=w_0$ follow from the nonzero constant spline and the top Schubert spline, respectively. Assume $0<\ell(w)<n$. By Proposition
    \ref{prop:schubert_flowup}, $\spl{\sigma}_w$ has degree $\ell(w)$, its support is contained in the upper order ideal generated by $w$, and $\spl{\sigma}_w(w)\neq 0$. In the strong Bruhat graph $\cg_{\Phi^+}$, the pair $(w,v)$ is an edge whenever $\abs{\ell(w) - \ell(v)}$ is an odd number (all odd-length elements of $I_2(n)$ are reflections). The upper order ideal $\{v\mid v\geq w\}$ contains $n-\ell(w)$ vertices of each length parity. Thus each vertex in this ideal has $n-\ell(w)$ neighbors within
    the ideal, out of $n$ neighbors in the full reflection graph, leaving exactly $\ell(w)$ edges to vertices outside the ideal.
    
    Thus, the set $\{\gamma \in \Phi^+ \mid vs_\gamma < v,\; vs_\gamma \not\geq w\}$ is size $\ell(w)$ for all $v \geq w$. Since $\spl{\sigma}_w(v') = 0$ for all $v' \not \geq w$, it follows from the divisibility conditions that, whenever $v \geq w$,
    \[
    \prod_{\substack{s_\gamma v < v\\
                    s_\gamma v \not\geq w}}
    t_\gamma
    \quad\text{divides}\quad
    \spl{\sigma}_w(v).
    \]
    Since this product has degree $\ell(w)$ and $\spl{\sigma}_w$ is homogeneous of degree $\ell(w)$, the stated formula holds with $c^w(v)\in\C$.
    It remains to show that $c^w(v)\neq 0$ whenever $v\geq w$.
    
    The linear forms corresponding to distinct positive roots are pairwise non-proportional. If $v,s_\zeta v\geq w$ and $\spl{\sigma}_w(v)\neq 0$,
    then the edge joining these vertices lies within the upper order ideal. Therefore $t_\zeta$ is not proportional to any factor in either of the products
    \[
    \prod_{\substack{s_\gamma v < v \\ s_\gamma v \not\geq w}} t_\gamma \qquad \text{or}\qquad \prod_{\substack{s_\gamma s_\zeta v < s_\zeta v \\ s_\gamma s_\zeta v \not\geq w}} t_\gamma.
    \]
    Thus $t_\zeta$ does not divide $\spl{\sigma}_w(v)$. If $\spl{\sigma}_w(s_\zeta v)=0$, the spline condition would force $t_\zeta\mid\spl{\sigma}_w(v)$, a contradiction. Hence $c^w(s_\zeta v)\neq 0$ as well. Starting from $\spl{\sigma}_w(w)\neq 0$ and using connectedness of the upper order ideal gives $c^w(v)\neq 0$ for every $v\geq w$, proving the claim.
\end{proof}
\begin{proposition}\label{prop:schubert_basis}
The Schubert splines form a homogeneous free $\poly$-basis of
$\splines{\Phi^+}$, namely
\[
    \splines{\Phi^+}
    =
    \bigoplus_{w\in I_2(n)}\poly \spl{\sigma}_w ,
    \qquad
    \deg\spl{\sigma}_w=\ell(w).
\]
\end{proposition}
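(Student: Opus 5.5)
The plan is to apply Lemma~\ref{lem:upper_triangular} with $H=\Phi^+$ and $R=\{\spl{\sigma}_w \mid w\in I_2(n)\}$. That lemma requires two hypotheses, and once they are checked it gives that $R$ is a free basis for both the left and right $\poly$-module structures.

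First, I will verify that $R$ is poset upper-triangular with respect to Bruhat order. Proposition~\ref{prop:schubert_flowup} gives $\spl{\sigma}_w(v)=0$ unless $w\leq v$. It also gives $\spl{\sigma}_w(w)=\prod_{\alpha\in N_L(w)}t_\alpha$, which is a product of nonzero linear forms and hence nonzero. So $w$ lies in the support of $\spl{\sigma}_w$. For $w=e$ this is simply the constant spline $\spl{\one}$, and for $w=w_0$ it is the explicit formula \eqref{eq:top_schubert_spline}.

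Second, I will match degrees: $\deg\spl{\sigma}_w=\hlen(w)$ with $H=\Phi^+$. Here $\hlen(w)=\abs{N_R(w)}$, the number of positive roots $\alpha$ with $\ell(ws_\alpha)<\ell(w)$. Proposition~\ref{prop:schubert_flowup} gives $\deg\spl{\sigma}_w=\ell(w)$, so the task is to show $\abs{N_R(w)}=\ell(w)$.
\begin{itemize}
\item One route is to note that $N_R(w)=N_L(w^{-1})$, because $ws_\alpha=(s_\alpha w^{-1})^{-1}$ and length is invariant under inversion. Then Lemma~\ref{lem:rank_two_bruhat_facts}(3) gives $\abs{N_L(w^{-1})}=\ell(w^{-1})=\ell(w)$.
\item Alternatively, one can induct using the recursion $N_R(s_\alpha w)=\{w^{-1}(\alpha)\}\sqcup N_R(w)$ from the same lemma, starting from $N_R(e)=\emptyset$.
\end{itemize}

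With both hypotheses in hand, Lemma~\ref{lem:upper_triangular} shows that $R$ is a minimal generating set and a free basis, so $\splines{\Phi^+}=\bigoplus_w\poly\,\spl{\sigma}_w$. Homogeneity of each $\spl{\sigma}_w$ is already part of Proposition~\ref{prop:schubert_flowup}.

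There is no real obstacle here; the only point needing care is the identification $\hlen=\ell$ when $H=\Phi^+$, handled above via $N_R(w)=N_L(w^{-1})$.
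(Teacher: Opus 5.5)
Your proposal is correct and follows the paper's proof, which is exactly the one-line combination of Proposition~\ref{prop:schubert_flowup} with Lemma~\ref{lem:upper_triangular}. The only addition is that you make explicit the check $\ell_{\Phi^+}(w)=\lvert N_R(w)\rvert=\lvert N_L(w^{-1})\rvert=\ell(w)$, which the paper leaves implicit; your argument for it is correct.
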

\begin{proof}
This follows immediately from Proposition~\ref{prop:schubert_flowup} and Lemma~\ref{lem:upper_triangular}.
\end{proof}

\begin{corollary}\label{cor:schubert_hilbert_series}
The Hilbert series of the full reflection spline module is
\[
    \Hilb(\splines{\Phi^+};q)
    =
    \frac{\sum_{w\in I_2(n)}q^{\ell(w)}}{(1-q)^2}
    =
    \frac{(1+q)(1+q+\cdots+q^{n-1})}{(1-q)^2}.
\]
\end{corollary}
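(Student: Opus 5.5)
The plan is to read off the Hilbert series from the free basis given by Proposition~\ref{prop:schubert_basis}, and then evaluate the length generating function of $I_2(n)$ directly.

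\textbf{Step 1: reduce to the rank polynomial.} By Proposition~\ref{prop:schubert_basis}, $\splines{\Phi^+}=\bigoplus_{w}\poly\,\spl{\sigma}_w$ with $\spl{\sigma}_w$ homogeneous of degree $\ell(w)$. For either the left or right action, the summand $\poly\,\spl{\sigma}_w$ is isomorphic as a graded vector space to $\poly$ shifted up by $\ell(w)$. For the right action this uses that $w$ acts by a degree-preserving automorphism, exactly as in the proof of Lemma~\ref{lem:upper_triangular}. Since $\Hilb(\poly;q)=1/(1-q)^2$, summing over $w$ gives
\[
\Hilb(\splines{\Phi^+};q)=\frac{\sum_{w\in I_2(n)}q^{\ell(w)}}{(1-q)^2}.
\]
This is also the statement $\hlen=\ell$ when $H=\Phi^+$, since $\hinv(w)=N_R(w)$ and $\card{N_R(w)}=\ell(w)$.

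\textbf{Step 2: count elements by length.} By the proof of Lemma~\ref{lem:rank_two_bruhat_facts}, $I_2(n)$ has one element of length $0$, two elements of each length $k$ with $1\le k\le n-1$, and one element $w_0$ of length $n$. Therefore
\[
\sum_{w\in I_2(n)}q^{\ell(w)}=1+2q+\cdots+2q^{n-1}+q^n.
\]

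\textbf{Step 3: factor.} Expanding $(1+q)(1+q+\cdots+q^{n-1})=(1+\cdots+q^{n-1})+(q+\cdots+q^n)$ gives coefficient $1$ at $q^0$ and $q^n$, and coefficient $2$ at every intermediate degree. This matches the sum in Step~2.

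There is no real obstacle here. The only point needing care is checking that the grading shift is the same under the right action, which is immediate because the dot action of $w$ on polynomials preserves degree.
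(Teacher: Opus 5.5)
Your proposal is correct and follows the paper's proof: read off the Hilbert series from the homogeneous free Schubert basis of Proposition~\ref{prop:schubert_basis}, then count one element of length $0$, one of length $n$, and two of each length $1,\ldots,n-1$ to get $(1+q)(1+q+\cdots+q^{n-1})$. Your remark about the right action is harmless, since the Hilbert series depends only on the graded vector space.
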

\begin{proof}
By Proposition~\ref{prop:schubert_basis}, $\splines{\Phi^+}$ is a free
$\poly$-module with one homogeneous basis element of degree $\ell(w)$ for
each $w\in I_2(n)$. Since $\Hilb(\poly;q)=1/(1-q)^2$, this
gives
\[
    \Hilb(\splines{\Phi^+};q)
    =
    \frac{\sum_{w\in I_2(n)}q^{\ell(w)}}{(1-q)^2}.
\]
In $I_2(n)$, there is one element of length $0$, one element of length
$n$, and two elements of each length $1,\ldots,n-1$. Therefore
$\sum_{w\in I_2(n)}q^{\ell(w)}
=(1+q)(1+q+\cdots+q^{n-1})$.
\end{proof}

We call the basis from Proposition~\ref{prop:schubert_basis} the
\emph{Schubert basis}. The next result identifies the full reflection spline
module with the usual Borel presentation.

Recall that $\spl{x}_l(w)=x$ and $\spl{y}_l(w)=y$, while
$\spl{x}_r(w)=w(x)$ and $\spl{y}_r(w)=w(y)$.

\begin{lemma}\label{lem:borel_presentation_schubert_splines}
The map
\begin{equation}\label{eq:borel_presentation_schubert_splines}
    \Psi\colon \poly\otimes_{\poly^{I_2(n)}}\poly
        \longrightarrow
    \splines{\Phi^+},
    \qquad
    \Psi(f\otimes h)(w)=f\,w(h),
\end{equation}
is an isomorphism of graded $\poly$-algebras, where $\poly$ acts on the
source through the first tensor factor and on $\splines{\Phi^+}$ by the left
module structure. Under this isomorphism, $x\otimes 1$ and $y\otimes 1$
correspond to $\spl{x}_l$ and $\spl{y}_l$, while $1\otimes x$ and
$1\otimes y$ correspond to $\spl{x}_r$ and $\spl{y}_r$. Moreover, the
dot action is given by $u\cdot(f\otimes h)=u(f)\otimes h$.
\end{lemma}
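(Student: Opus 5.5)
First I check that $\Psi$ is well defined and is a graded algebra map. The assignment $(f,h)\mapsto(w\mapsto f\,w(h))$ is $\C$-bilinear. It is balanced over $\poly^{I_2(n)}$: if $g$ is invariant then $f\,w(gh)=f\,g\,w(h)$. Its image lies in $\splines{\Phi^+}$, since $f\,w(h)-f\,ws_\alpha(h)=f\,w(h-s_\alpha h)$ is divisible by $t_{w(\alpha)}$ by Lemma~\ref{lem:reflect_divisible}, exactly as in the verification for $\spl{x}_r$. Multiplicativity is pointwise: $(f\otimes h)(f'\otimes h')\mapsto ff'\,w(hh')$. Degrees add, and the left $\poly$-structure matches because $f\otimes 1\mapsto f\cdot\spl{\one}$. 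The identifications $x\otimes1\mapsto\spl{x}_l$ and $1\otimes x\mapsto\spl{x}_r$ are immediate. For the dot action, compute $(u\cdot\Psi(f\otimes h))(v)=u\bigl(f\,u^{-1}v(h)\bigr)=u(f)\,v(h)=\Psi(u(f)\otimes h)(v)$. The map $u(f)\otimes h$ is well defined on the tensor product because $u$ fixes invariants.

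For bijectivity I compare free modules of equal graded rank. By Chevalley's theorem, which I invoke as standard for real reflection groups, $\poly$ is free over $\poly^{I_2(n)}$ of rank $2n$, with graded rank $\sum_w q^{\ell(w)}$. The invariants are generated in degrees $2$ and $n$, giving the Hilbert series $(1-q^2)(1-q^n)/(1-q)^2\cdot 1/((1-q^2)(1-q^n))$ quotient count $(1+q)(1+\cdots+q^{n-1})$. Hence the source is a free left $\poly$-module with Hilbert series $(1+q)(1+\cdots+q^{n-1})/(1-q)^2$. By Corollary~\ref{cor:schubert_hilbert_series}, this equals $\Hilb(\splines{\Phi^+};q)$. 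So it suffices to prove that $\Psi$ is either injective or surjective, since each graded piece is finite dimensional.

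Surjectivity is where I expect the real work; I would prove it via the Schubert basis. By Proposition~\ref{prop:schubert_basis}, it suffices to show each $\spl{\sigma}_w$ is in the image. The top class $\spl{\sigma}_{w_0}$ is obtained from $\Psi$ of the element $\prod_{\alpha\in\Phi^+}(t_\alpha\otimes1-1\otimes t_\alpha)$ up to a scalar. Its value at $v$ is $\prod_\alpha(t_\alpha-v(t_\alpha))$. This vanishes unless $v$ sends no positive root to itself. In rank two, for $v\ne w_0$ some root is fixed up to sign: reflections fix their orthogonal line, so $t_\beta-v(t_\beta)=0$ for $\beta\perp$ the mirror; for rotations one needs another element. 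So I would instead use $\prod_\alpha(t_\alpha\otimes 1-1\otimes w_0(t_\alpha))$-type products, or more robustly build classes by induction. The divided difference $\partial_\alpha$ of Lemma~\ref{lem:divided_difference_spline} corresponds on the source to $f\otimes h\mapsto (f\otimes h- s_\alpha f\otimes h)/t_\alpha$ acting on the first factor. This formula must be checked to preserve the image, which holds because $f-s_\alpha f$ is divisible by $t_\alpha$ in $\poly$. The image is therefore closed under $\partial_\alpha$, and by the recursion \eqref{eq:schubert_recursion} all Schubert classes lie in the image once $\spl{\sigma}_{w_0}$ does. The main obstacle is thus producing $\spl{\sigma}_{w_0}$, a spline supported at the single vertex $w_0$, in the image. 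I would try Lagrange-type interpolation. Choose a generic vector $p$; the values $v(p)$ for $v\in I_2(n)$ are distinct because the action is free on generic points. Take $\prod_{v\ne w_0}(\,\cdot\,)$ of suitable linear combinations of $t_p\otimes1$ and $1\otimes t_{p}$ evaluated to separate $w_0$. If the degree bookkeeping resists, I would fall back on injectivity. A relation $\sum f_i\otimes h_i\mapsto0$, with $h_i$ a $\poly^{I_2(n)}$-basis of $\poly$, means $\sum_i f_i\,w(h_i)=0$ for all $w$. Nonsingularity of the matrix $(w(h_i))$ over the fraction field follows from Dedekind's independence of characters, Artin's theorem for the Galois extension $\C(x,y)/\C(x,y)^{I_2(n)}$. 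This gives injectivity cleanly and, combined with equal Hilbert series, completes the proof.
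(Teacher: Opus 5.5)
Your final argument is correct and is essentially the paper's proof. The paper also proves injectivity by passing to $\operatorname{Frac}(\poly)$ and using the Galois isomorphism $\operatorname{Frac}(\poly)\otimes_{\operatorname{Frac}(\poly)^{I_2(n)}}\operatorname{Frac}(\poly)\cong\prod_{w}\operatorname{Frac}(\poly)$, which is equivalent to your nonsingularity of $(w(h_i))$ via Dedekind independence. It then concludes with the same Hilbert series comparison from Chevalley's theorem and Corollary~\ref{cor:schubert_hilbert_series}. The surjectivity detour is unnecessary and, as you noticed, does not close, since your candidate product is not supported only at $w_0$, so you should simply drop it.
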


\begin{proof}
The map is well-defined over $\poly^{I_2(n)}$: if
$p\in\poly^{I_2(n)}$, then
$\Psi(fp\otimes h)(w)=fp\,w(h)=f\,w(p)w(h)=\Psi(f\otimes ph)(w)$. It is
also multiplicative, since
$\Psi((f\otimes h)(f'\otimes h'))(w)=ff'w(hh')=\Psi(f\otimes h)(w)
\Psi(f'\otimes h')(w)$.

We next check that the image consists of splines. If $w\leftrightarrow
ws_\alpha$ is a right reflection edge, then
$\Psi(f\otimes h)(w)-\Psi(f\otimes h)(ws_\alpha)=
f\,w(h-s_\alpha(h))$. By Lemma~\ref{lem:reflect_divisible},
$h-s_\alpha(h)$ is divisible by $t_\alpha$, so this difference is
divisible by $t_{w(\alpha)}$, which is the edge label up to sign.

To prove injectivity, view both sides as left $\poly$-modules and localize
at the nonzero elements of $\poly$. Since $I_2(n)$ is a finite reflection
group, $\poly$ is free over $\poly^{I_2(n)}$
\cite[\S3.6]{HumphreysReflectionGroups}. Hence
$\poly\otimes_{\poly^{I_2(n)}}\poly$ is torsion-free as a left
$\poly$-module, so localization detects injectivity.

After localization, every edge label becomes a unit, so the localized spline
module is the full product
$\prod_{w\in I_2(n)}\operatorname{Frac}(\poly)$. The localized source is
initially $\operatorname{Frac}(\poly)\otimes_{\poly^{I_2(n)}}\poly$, but
the second tensor factor also localizes to $\operatorname{Frac}(\poly)$.
Indeed, if $a\in\poly$ is nonzero, then
$N(a)=\prod_{u\in I_2(n)}u(a)$ is a nonzero invariant polynomial. Since
$1\otimes N(a)=N(a)\otimes 1$ is invertible after localizing the first
factor, and $1\otimes N(a)=\prod_{u\in I_2(n)}(1\otimes u(a))$, each factor
$1\otimes u(a)$ is invertible in this commutative ring. In particular,
$1\otimes a$ is invertible.

Since
$\operatorname{Frac}(\poly)^{I_2(n)}=\operatorname{Frac}(\poly^{I_2(n)})$,
the localized source is
\[\operatorname{Frac}(\poly)\otimes_{\operatorname{Frac}(\poly)^{I_2(n)}}
\operatorname{Frac}(\poly).\] The field extension
$\operatorname{Frac}(\poly)/\operatorname{Frac}(\poly)^{I_2(n)}$ is Galois
with group $I_2(n)$, so the canonical map
\[
    \operatorname{Frac}(\poly)
        \otimes_{\operatorname{Frac}(\poly)^{I_2(n)}}
    \operatorname{Frac}(\poly)
        \longrightarrow
    \prod_{w\in I_2(n)}\operatorname{Frac}(\poly),
    \qquad\!\!
    a\otimes b\longmapsto \bigl(w\mapsto a\,w(b)\bigr),
\]
is an isomorphism. Thus the localized map is injective, and therefore
$\Psi$ is injective.

It remains to compare Hilbert series. By Chevalley's theorem for finite
reflection groups, $\poly^{I_2(n)}$ is a polynomial ring; for
$I_2(n)$, its basic invariant degrees are $2$ and $n$
\cite[\S\S3.5, 3.7, Table~1]{HumphreysReflectionGroups}. Hence
$\Hilb(\poly\otimes_{\poly^{I_2(n)}}\poly;q)=
(1-q^2)(1-q^n)/(1-q)^4=(1+q)(1+q+\cdots+q^{n-1})/(1-q)^2$. By
Corollary~\ref{cor:schubert_hilbert_series}, this is also the Hilbert series
of $\splines{\Phi^+}$. Since the graded pieces are finite-dimensional, the
injective graded map $\Psi$ is an isomorphism.

The identifications of the coordinate splines follow directly from
$\Psi(x\otimes 1)(w)=x$, $\Psi(y\otimes 1)(w)=y$,
$\Psi(1\otimes x)(w)=w(x)$, and $\Psi(1\otimes y)(w)=w(y)$. Finally, for
$u,v\in I_2(n)$, one has
$(u\cdot\Psi(f\otimes h))(v)=u(f\,(u^{-1}v)(h))=u(f)\,v(h)
=\Psi(u(f)\otimes h)(v)$, so the dot action corresponds to
$u\cdot(f\otimes h)=u(f)\otimes h$.
\end{proof}

Both submodules appearing in the quotients below are stable under the dot
action: $u\cdot\spl{x}_l$ and $u\cdot\spl{y}_l$ are linear combinations
of $\spl{x}_l$ and $\spl{y}_l$, while
$u\cdot\spl{x}_r=\spl{x}_r$ and $u\cdot\spl{y}_r=\spl{y}_r$.

\begin{proposition}\label{prop:left_schubert_rep}
    With respect to the left $\poly$-module structure on
    $\splines{\Phi^+}$, the quotient representation
    \[
        \lqot{\Phi^+}
            \deftobe
        \faktor{\splines{\Phi^+}}
        {\llangle \spl{x}_l,\spl{y}_l\rrangle\splines{\Phi^+}}
    \]
    is a direct sum of trivial representations, with one copy in degree
    $\ell(w)$ for each $w\in I_2(n)$. Equivalently,
    \[
        \lqot{\Phi^+}
            \cong
        \bigoplus_{w\in I_2(n)} q^{\ell(w)}\one
    \]
    as a graded $I_2(n)$-representation. Hence
    \[
        \lqot{\Phi^+}
            \cong
        (1+q)(1+q+\cdots+q^{n-1})\one.
    \]
\end{proposition}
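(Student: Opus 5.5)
We transport the question through the Borel presentation of Lemma~\ref{lem:borel_presentation_schubert_splines}. Under $\Psi$, the submodule $\llangle \spl{x}_l,\spl{y}_l\rrangle\splines{\Phi^+}$ corresponds to $J\otimes_{\poly^{I_2(n)}}\poly$, where $J=\llangle x,y\rrangle$. Hence
\[
    \lqot{\Phi^+}\cong \bigl(\poly/J\bigr)\otimes_{\poly^{I_2(n)}}\poly \cong \C\otimes_{\poly^{I_2(n)}}\poly = \poly/\llangle \poly^{I_2(n)}_+\rrangle.
\]
Here the invariants act on $\C=\poly/J$ through evaluation at $0$. This identifies the left quotient, as a graded vector space, with the coinvariant algebra, sitting in the second tensor factor.

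The key observation is the form of the dot action. By the same lemma it is $u\cdot(f\otimes h)=u(f)\otimes h$, so it acts only on the first tensor factor. After tensoring that factor down to $\poly/J\cong\C$, the group acts on $\C$ through its action on constants, which is trivial because $u(c)=c$ for $c\in\C$. Therefore $I_2(n)$ acts trivially on $\C\otimes_{\poly^{I_2(n)}}\poly$, and $\lqot{\Phi^+}$ is a direct sum of trivial representations.

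One point needs care. The dot action on $\splines{\Phi^+}$ must descend compatibly: $J\splines{\Phi^+}$ must be dot-stable, and the identification of the quotient must be $I_2(n)$-equivariant. Dot-stability follows because $u\cdot\spl{x}_l$ and $u\cdot\spl{y}_l$ are linear combinations of $\spl{x}_l$ and $\spl{y}_l$, and the twisted formula $u\cdot(g\spl{\rho})=u(g)\,(u\cdot\spl{\rho})$ for left scalars preserves $J\splines{\Phi^+}$. Equivariance is then automatic from the explicit formula for the action under $\Psi$.

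For the grading, Proposition~\ref{prop:schubert_basis} shows the module is free on the Schubert splines $\spl{\sigma}_w$ of degree $\ell(w)$. Their images therefore form a $\C$-basis of the quotient with one element in each degree $\ell(w)$; equivalently, use Corollary~\ref{cor:schubert_hilbert_series} and multiply by $(1-q)^2$. Combined with the trivial action, this gives $\bigoplus_w q^{\ell(w)}\one$, and the count of elements by length in that corollary gives the factored form $(1+q)(1+q+\cdots+q^{n-1})\one$.

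The only real obstacle is the identification of $\Psi^{-1}(J\splines{\Phi^+})$ with $J\otimes\poly$. This is immediate, because $\Psi$ is a left $\poly$-module isomorphism and $x\otimes1\mapsto\spl{x}_l$, $y\otimes1\mapsto\spl{y}_l$.
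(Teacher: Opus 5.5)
Your proposal is correct and follows essentially the same route as the paper's proof: you use the Borel presentation to identify the left quotient with $\C\otimes_{\poly^{I_2(n)}}\poly$, observe that the dot action lives only on the first tensor factor, which has been reduced to constants, and read off the grading from the Schubert basis. The additional identification with the coinvariant algebra as a graded vector space and the dot-stability check both match remarks the paper makes alongside this result, so nothing is missing.
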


\begin{proof}
    By Lemma~\ref{lem:borel_presentation_schubert_splines}, quotienting by
    $\llangle \spl{x}_l,\spl{y}_l\rrangle$ corresponds to quotienting
    $\poly\otimes_{\poly^{I_2(n)}}\poly$ by
    $\llangle x\otimes 1,y\otimes 1\rrangle$. Hence
    \[
        \lqot{\Phi^+}
            \cong
        \C\otimes_{\poly^{I_2(n)}}\poly,
    \]
    where $\C=\poly/\llangle x,y\rrangle$. Since the dot action on
    $\poly\otimes_{\poly^{I_2(n)}}\poly$ acts only on the first tensor
    factor, and that factor has been quotiented to constants, the dot action
    on $\lqot{\Phi^+}$ is trivial.

    The grading is determined by Proposition~\ref{prop:schubert_basis}: since
    $\splines{\Phi^+}$ is free over the left copy of $\poly$ with
    homogeneous basis $\{\spl{\sigma}_w\}_{w\in I_2(n)}$, the classes
    $[\spl{\sigma}_w]$ form a homogeneous vector-space basis for
    $\lqot{\Phi^+}$, with $\deg\spl{\sigma}_w=\ell(w)$. Thus there is one
    copy of the trivial representation in degree $\ell(w)$ for each
    $w\in I_2(n)$. Since
    $\sum_{w\in I_2(n)}q^{\ell(w)}=(1+q)(1+q+\cdots+q^{n-1})$, the claimed
    graded character follows.
\end{proof}

\begin{proposition}\label{prop:right_schubert_rep}
    With respect to the right $\poly$-module structure on
    $\splines{\Phi^+}$, the quotient representation
    \[
        \rqot{\Phi^+}
            \deftobe
        \faktor{\splines{\Phi^+}}
        {\llangle \spl{x}_r,\spl{y}_r\rrangle\splines{\Phi^+}}
    \]
    is isomorphic, as a graded $I_2(n)$-representation, to the coinvariant
    algebra
    \[
        \faktor{\poly}{\llangle(\poly^{I_2(n)})_+\rrangle}.
    \]
    In particular, since the action on $\poly_1$ is $V_1$,
    \[
        \operatorname{Tr}\bigl(g\mid \rqot{\Phi^+};q\bigr)
            =
        \frac{(1-q^2)(1-q^n)}{\det(I-qV_1(g))}
    \]
    for every $g\in I_2(n)$. Setting $g=e$ gives
    \[
        \dim_q \rqot{\Phi^+}
            =
        \frac{(1-q^2)(1-q^n)}{(1-q)^2}
            =
        (1+q)(1+q+\cdots+q^{n-1}).
    \]
\end{proposition}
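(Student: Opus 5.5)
We will work through the Borel presentation of Lemma~\ref{lem:borel_presentation_schubert_splines}, exactly as in the proof of Proposition~\ref{prop:left_schubert_rep}, but now on the other tensor factor. Under $\Psi$, the right module structure on $\splines{\Phi^+}$ corresponds to the action of $\poly$ on the second tensor factor, since $1\otimes x$ and $1\otimes y$ map to $\spl{x}_r$ and $\spl{y}_r$. Hence the submodule $\llangle \spl{x}_r,\spl{y}_r\rrangle\splines{\Phi^+}$ corresponds to $(\poly\otimes_{\poly^{I_2(n)}}\poly)\cdot\llangle 1\otimes x,1\otimes y\rrangle$. We conclude
\[
\rqot{\Phi^+}\cong \poly\otimes_{\poly^{I_2(n)}}\C,
\]
with $\C=\poly/\llangle x,y\rrangle$ in the second factor. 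The invariants act on $\C$ through their constant terms, so for any graded $\poly^{I_2(n)}$-algebra $A$ we have $A\otimes_{\poly^{I_2(n)}}\C\cong A/\llangle(\poly^{I_2(n)})_+\rrangle A$. Taking $A=\poly$ gives the coinvariant algebra $\poly/\llangle(\poly^{I_2(n)})_+\rrangle$. This is a graded isomorphism, because $\Psi$ is graded.

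Next we check equivariance. By Lemma~\ref{lem:borel_presentation_schubert_splines}, the dot action is $u\cdot(f\otimes h)=u(f)\otimes h$. Its acting factor is the first one, and that factor survives in the quotient. Under the identification above, $f\otimes 1\mapsto [f]$, so the induced action on the coinvariant algebra is the natural action $u\cdot[f]=[u(f)]$. This is well defined because the ideal generated by the positive-degree invariants is $I_2(n)$-stable. This proves the main claim.

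For the graded trace, we use that $\poly$ is free over $R=\poly^{I_2(n)}$, as in the proof of Lemma~\ref{lem:borel_presentation_schubert_splines}. A graded $I_2(n)$-stable complement $U$ of $R_+\poly$ then gives an equivariant graded isomorphism $\poly\cong R\otimes U$. Here $U$ is isomorphic to the coinvariant algebra, and $R$ carries the trivial action. Taking graded traces gives
\[
\operatorname{Tr}(g\mid\poly;q)=\operatorname{Hilb}(R;q)\operatorname{Tr}(g\mid U;q).
\]
By Molien's identity for the symmetric algebra on $V_1$, which is exactly the identity $\poly_1\cong V_1$, we have $\operatorname{Tr}(g\mid\poly;q)=1/\det(I-qV_1(g))$. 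Chevalley's theorem with degrees $2$ and $n$ gives $\operatorname{Hilb}(R;q)=1/((1-q^2)(1-q^n))$. Dividing the first by the second yields the stated formula, and setting $g=e$ yields the $q$-dimension. One caution: the action on $\poly_1$ is really the dual of the action on $V$ (it acts through the transpose, as Lemma~\ref{lem:reflect_divisible} shows). This is harmless, since dihedral representations are real and hence self-dual.

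We expect the main obstacle to be the equivariant factorization $\poly\cong R\otimes U$. Freeness by itself is not equivariant, so we choose $U$ as a graded $I_2(n)$-stable complement to $R_+\poly$ in each degree, which exists by Maschke's theorem. Then we apply the graded Nakayama argument: the multiplication map $R\otimes U\to\poly$ is surjective, and Hilbert series comparison shows it is injective. Everything else is bookkeeping with Lemma~\ref{lem:borel_presentation_schubert_splines}.
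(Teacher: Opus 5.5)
Your proposal is correct and follows the paper's proof. The Borel presentation of Lemma~\ref{lem:borel_presentation_schubert_splines} identifies the right quotient with $\poly\otimes_{\poly^{I_2(n)}}\C\cong\poly/\langle(\poly^{I_2(n)})_+\rangle$, the dot action lives on the surviving first factor, and the trace comes from Molien's formula together with the basic invariant degrees $2$ and $n$.

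The differences are minor. You justify dividing the graded character by $\operatorname{Hilb}(\poly^{I_2(n)};q)$ through an equivariant splitting $\poly\cong\poly^{I_2(n)}\otimes U$ (Maschke plus graded Nakayama), whereas the paper uses that the invariants $F_2,F_n$ form a homogeneous regular sequence. Your duality caution is unnecessary: with the paper's convention $X(t_j)=t_{X(e_j)}$, the matrix of $g$ on $\poly_1$ in the basis $x,y$ is literally $V_1(g)$.
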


\begin{proof}
    By Lemma~\ref{lem:borel_presentation_schubert_splines}, quotienting by
    $\llangle \spl{x}_r,\spl{y}_r\rrangle$ corresponds to quotienting
    $\poly\otimes_{\poly^{I_2(n)}}\poly$ by
    $\llangle 1\otimes x,1\otimes y\rrangle$. Hence
    \[
        \rqot{\Phi^+}
            \cong
        \poly\otimes_{\poly^{I_2(n)}}\C
            \cong
        \faktor{\poly}{\llangle(\poly^{I_2(n)})_+\rrangle},
    \]
    where $\C=\poly/\llangle x,y\rrangle$. Since the dot action on
    $\poly\otimes_{\poly^{I_2(n)}}\poly$ is
    $u\cdot(f\otimes h)=u(f)\otimes h$, the induced action on this quotient
    is the usual action of $I_2(n)$ on the coinvariant algebra.

    By Chevalley's theorem for finite reflection groups, $\poly^{I_2(n)}$
    is a polynomial ring; its basic invariant degrees
    are $2$ and $n$
    \cite[\S\S3.5, 3.7, Table~1]{HumphreysReflectionGroups}. Thus
    $\poly^{I_2(n)}\cong \C[F_2,F_n]$, with $\deg F_2=2$ and
    $\deg F_n=n$. Since $F_2,F_n$ form a homogeneous regular sequence and
    are fixed by $I_2(n)$, quotienting by them multiplies the graded
    character of $\poly$ by $(1-q^2)(1-q^n)$. The graded character of
    $\poly$ is $1/\det(I-qV_1(g))$, so
    \[
        \operatorname{Tr}\bigl(g\mid \rqot{\Phi^+};q\bigr)
            =
        \frac{(1-q^2)(1-q^n)}{\det(I-qV_1(g))}.
    \]
    The Hilbert series follows by setting $g=e$.
\end{proof}

\begin{remark}
    These two quotients have the same Hilbert series, and indeed the same
    underlying graded vector-space type, but different $I_2(n)$-actions. The
    dot action acts on the first tensor factor in
    $\poly\otimes_{\poly^{I_2(n)}}\poly$. Therefore the left quotient kills
    the factor on which $I_2(n)$ acts, giving a trivial representation,
    while the right quotient kills the other factor, leaving the usual coinvariant representation.
\end{remark}

\begin{remark}
    The elementary construction above is the rank-two $I_2(n)$ specialization
    of Kaji's finite-Coxeter equivariant Schubert calculus. Kaji denotes the
    double coinvariant presentation by $R_W[t;x]$
    \cite[Proposition~4.1]{KajiEquivariantSchubertCoxeter}; the equivariant
    Schubert classes form a free $R[t]$-basis of this ring
    \cite[Theorem~4.5]{KajiEquivariantSchubertCoxeter}. Kaji also defines the
    finite-Coxeter GKM ring
    \cite[Definition~4.7]{KajiEquivariantSchubertCoxeter} and proves that the
    localization map identifies it with $R_W[t;x]$
    \cite[Corollary~4.10]{KajiEquivariantSchubertCoxeter}. In this rank-two setting, we give an elementary proof using the Bruhat order, spline divisibility conditions, and basic invariant theory.
\end{remark}


\section{Divided Differences and Flow-up Bases}\label{sec:flowup} 

Throughout this section, all module generators, bases, and freeness
statements for $\Ms$ hold equally for both the left $\poly$-module structure defined in
\eqref{eqn:left_action} and the right $\poly$-action structure defined in \eqref{eqn:right_action}. We consider an arbitrary Hessenberg space $H\subseteq \Phi^+$, and show $\Ms$ has a flow-up basis (Definition \ref{def:flowup}) using a combination of the divided difference, the dot action, and Schubert splines.

\begin{lemma}\label{lem:generalized_divided_difference}
    Fix a Hessenberg space $H$, a spline $\spl{\rho} \in \Ms$, and $\alpha \in \Phi^+$. If $(v,s_\alpha v) \in \ce_H$ for all $v \in \supp(\spl{\rho})$, then 
    \begin{equation}\label{eq:divided_difference}
        \partial_\alpha\spl{\rho} \coloneqq  \frac{\spl{\rho} - s_\alpha \cdot \spl{\rho}}{t_\alpha} \in \Ms.
    \end{equation}
\end{lemma}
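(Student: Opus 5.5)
The plan is to check directly the two things needed: first that $\partial_\alpha\spl{\rho}$ takes values in $\poly$, and then that it satisfies every edge condition of $\cg_H$. Throughout I use the translation from the proof of Lemma~\ref{lem:left_edges_full_graph}. A left edge $v\leftrightarrow s_\alpha v$ is the right edge $v\leftrightarrow vs_\beta$ with $v(\beta)=\pm\alpha$, so its label is $t_\alpha$ up to sign. Conversely, each right edge $(w,ws_\gamma)$ of $\cg_H$ equals $(w,s_\delta w)$ with label $t_\delta$, where $\delta=\pm w(\gamma)$. I also note that $(\partial_\alpha\spl{\rho})(v)=\bigl(\spl{\rho}(v)-s_\alpha(\spl{\rho}(s_\alpha v))\bigr)/t_\alpha$, which matches \eqref{eq:divided_difference_spline}.

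\emph{Step 1: polynomiality.} Fix $v$ and write the numerator as
\[
\bigl(\spl{\rho}(v)-\spl{\rho}(s_\alpha v)\bigr)+\bigl(\spl{\rho}(s_\alpha v)-s_\alpha\spl{\rho}(s_\alpha v)\bigr).
\]
If neither $v$ nor $s_\alpha v$ lies in $\supp(\spl{\rho})$, the numerator is $0$. Otherwise one of them, say $u$, is in the support. By hypothesis $(u,s_\alpha u)\in\ce_H$, and this edge is the same as the edge between $v$ and $s_\alpha v$. Its label is $\pm t_\alpha$, so the first bracket is divisible by $t_\alpha$. The second bracket is divisible by $t_\alpha$ by Lemma~\ref{lem:reflect_divisible}. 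Hence $\partial_\alpha\spl{\rho}(v)\in\poly$.

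\emph{Step 2: edge conditions.} Fix an edge $(w,ws_\gamma)\in\ce_H$ with $\gamma\in H$. Write $ws_\gamma=s_\delta w$, with label $t_\delta$.

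In the first case $\delta\neq\pm\alpha$. By Lemma~\ref{lem:dot_action_preserves_splines}, $\spl{\rho}-s_\alpha\cdot\spl{\rho}$ is a spline in $\Ms$. So $t_\alpha\bigl(\partial_\alpha\spl{\rho}(w)-\partial_\alpha\spl{\rho}(ws_\gamma)\bigr)$ is divisible by $t_\delta$. Since $t_\alpha$ and $t_\delta$ are non-proportional linear forms, they are coprime, and therefore $t_\delta$ divides the difference itself.

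In the second case $\delta=\pm\alpha$, so the edge is $(w,s_\alpha w)$. Set $g=\spl{\rho}(w)-\spl{\rho}(s_\alpha w)$. Expanding gives
\[
t_\alpha\bigl(\partial_\alpha\spl{\rho}(w)-\partial_\alpha\spl{\rho}(s_\alpha w)\bigr)=g+s_\alpha(g).
\]
The edge $(w,s_\alpha w)$ lies in $\ce_H$, so $g=t_\alpha h$ for some $h\in\poly$. Since $s_\alpha(t_\alpha)=-t_\alpha$, we get $g+s_\alpha g=t_\alpha\bigl(h-s_\alpha h\bigr)$. Hence the difference equals $h-s_\alpha(h)$, which is divisible by $t_\alpha$ by Lemma~\ref{lem:reflect_divisible}.

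The main point requiring care is this second case. It is the only place where coprimality fails, and the computation uses the full strength of the edge condition along $\alpha$-edges rather than just the dot-action invariance. If $\spl{\rho}$ is homogeneous, then $\partial_\alpha\spl{\rho}$ is homogeneous of degree one less, though this is not needed for membership in $\Ms$.
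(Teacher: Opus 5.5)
Your proof is correct and follows the paper's argument essentially step for step. Both use the same splitting of the numerator for polynomiality and the same coprimality argument for edges whose label is not proportional to $t_\alpha$. In the $\alpha$-edge case, the paper instead observes $\partial_\alpha\spl{\rho}(s_\alpha v)=s_\alpha\bigl(\partial_\alpha\spl{\rho}(v)\bigr)$ and applies Lemma~\ref{lem:reflect_divisible} to $\partial_\alpha\spl{\rho}(v)$. That route needs only the polynomiality from your Step 1, not a second use of the edge condition, so your remark that this case requires the full strength of the $\alpha$-edge condition is slightly overstated; the difference is otherwise cosmetic.
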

\begin{proof}
First we check that $\partial_\alpha \spl{\rho}$ is polynomial-valued at an arbitrary $v$. If neither $v$ nor $s_\alpha v$ are in the support, then divisibility holds immediately (the evaluation is $0$).
 By assumption, if either $v$  or $s_\alpha v$ are in the support of $\spl{\rho}$, then $\spl{\rho}(v)-\spl{\rho}(s_\alpha v)$ is divisible by $t_\alpha$. 
 Also $\spl{\rho}(s_\alpha v)-s_\alpha(\spl{\rho}(s_\alpha v))$ is divisible by $t_\alpha$ by Lemma~\ref{lem:reflect_divisible}. 
 Hence the numerator in \eqref{eq:divided_difference} evaluated at $v$, which can be written as 
\[
\spl{\rho}(v) - \spl{\rho}(s_\alpha v) + \spl{\rho}(s_\alpha v) - s_\alpha\spl{\rho}(s_\alpha v),
\]
is divisible by $t_\alpha$.

It remains to check the spline condition. Fix $\beta\in H$ and
$v\in I_2(n)$. Then 
\begin{align*}
    \partial_\alpha \spl{\rho}(v) - \partial_\alpha \spl{\rho}(vs_\beta) &= \frac{\spl{\rho}(v) - s_\alpha \cdot \spl{\rho}(v) - \spl{\rho}(vs_\beta) + s_\alpha \cdot \spl{\rho}(vs_\beta)}{t_\alpha}\\
    &= \frac{\left(\spl{\rho}(v)- \spl{\rho}(vs_\beta)\right) - \left(s_\alpha \cdot \spl{\rho}(v)  - s_\alpha \cdot \spl{\rho}(vs_\beta)\right)}{t_\alpha}.
\end{align*}
Both $\spl{\rho}$ and $s_\alpha\cdot\spl{\rho}$ are splines, the latter
by Lemma~\ref{lem:dot_action_preserves_splines}. Hence
the numerator is divisible by $v(t_\beta) = t_{v(\beta)}$. \par 
Now if $\alpha \neq  \pm v(\beta)$, the linear forms $t_\alpha$ and $t_{v(\beta)}$ are
relatively prime in $\poly$. Thus the numerator is divisible by $t_\alpha t_{v(\beta)}$, and
after dividing by $t_\alpha$ we get
$\partial_\alpha \spl{\rho}(v)-\partial_\alpha \spl{\rho}(vs_\beta)\in \llangle t_{v(\beta)}\rrangle$.

It remains only to handle $v(\beta)=\pm \alpha$, wherein $vs_\beta = s_\alpha v$ and the edge in question is $(v,s_\alpha v) \in \ce_H$. Since $s_\alpha(t_\alpha)=-t_\alpha$,
equation \eqref{eq:divided_difference} gives
\begin{align*}
    \partial_\alpha \spl{\rho}(s_\alpha v) &= \frac{\spl{\rho}(s_\alpha v) - s_\alpha \cdot \spl{\rho}(s_\alpha v)}{t_\alpha} \\
    &= \frac{\spl{\rho}(s_\alpha v) - s_\alpha \spl{\rho}(v)}{t_\alpha} \\
    &=  \frac{s_\alpha \left(s_\alpha \cdot \spl{\rho}(v) - \spl{\rho}(v)\right)}{t_\alpha} \\
    &=  s_\alpha\left(\frac{s_\alpha \cdot \spl{\rho}(v) - \spl{\rho}(v)}{-t_\alpha}\right) \\
    &=s_\alpha(\partial_\alpha \spl{\rho}(v)).
\end{align*}

Hence
$\partial_\alpha \spl{\rho}(v)-\partial_\alpha \spl{\rho}(s_\alpha v)
=
\partial_\alpha \spl{\rho}(v)-s_\alpha(\partial_\alpha \spl{\rho}(v))$, which is divisible
by $t_\alpha$ by Lemma~\ref{lem:reflect_divisible}.
\end{proof}

\begin{lemma}\label{lem:leftright_length}
    Let $\alpha$ be a simple root, $w \in I_2(n)$, and $\gamma$ the root such that $w < s_\alpha w = ws_\gamma$. Then 
    \[
    \ell(s_\gamma) = \begin{cases}
        2\ell(w) + 1 &\text{if }2\ell(w)+1 \leq n \\
        2(n-\ell(w)) - 1 &\text{otherwise.}
        \end{cases}
    \]
    If on the other hand $w > s_\alpha w = ws_\gamma$, then 
    \[
    \ell(s_\gamma) = \begin{cases}
        2\ell(w) - 1 &\text{if }2\ell(w)-1 \leq n \\
        2(n-\ell(w)) +1 &\text{otherwise.}
        \end{cases}
    \]
\end{lemma}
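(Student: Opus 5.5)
The plan is to compute $s_\gamma = w^{-1}s_\alpha w$ directly and read off its length from reduced words. Every reflection of $I_2(n)$ has odd length, and for each odd $m \le n$ there is exactly one reflection of length $m$ beginning with each simple letter. So it suffices to identify the reduced word of $w^{-1}s_\alpha w$.

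First I will treat the ascent case $w<s_\alpha w$. By Lemma~\ref{lem:rank_two_bruhat_facts}(1), if $w\neq w_0$ then the reduced word of $w$ does not begin with $s_\alpha$, so $s_\alpha w$ has the alternating reduced word $s_\alpha\, w$ of length $\ell(w)+1$. Write $k=\ell(w)$. The word $w^{-1} s_\alpha w$ is then a palindromic alternating word of length $2k+1$: the reversed word of $w$ ends in the letter other than $s_\alpha$, so all three pieces concatenate with no cancellation.

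If $2k+1\le n$, this alternating word is reduced, since alternating words of length at most $n$ are reduced, and so $\ell(s_\gamma)=2k+1$.

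If $2k+1>n$, I will use the braid relation $(s_1s_2)^n=e$. It says that an alternating word of length $m$ with $n<m\le 2n$ equals the alternating word of length $2n-m$ that begins with the opposite letter; equivalently, one multiplies by $w_0$ and cancels. This gives length $2n-(2k+1)=2(n-k)-1$. Since $k\le n-1$ here, we have $2k+1\le 2n-1$, so the reduction stays in range and the result is nonnegative and odd. This explicit reduction is the only real subtlety, so I will state it carefully once: the alternating word of length $m$ equals $w_0$ times the alternating word of length $m-n$, and $\ell(w_0 u)=n-\ell(u)$.

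For the descent case $w>s_\alpha w$, set $w'=s_\alpha w$. Then $w'<s_\alpha w'=w$, and $w' s_\gamma = w$ with the same $\gamma$, because $w s_\gamma = s_\alpha w$ is equivalent to $s_\alpha w s_\gamma = w$, i.e.\ $w' s_\gamma = s_\alpha w'$. Applying the ascent case to $w'$ with $\ell(w')=\ell(w)-1$ gives $\ell(s_\gamma)=2\ell(w)-1$ when $2\ell(w)-1\le n$, and $2(n-\ell(w)+1)-1=2(n-\ell(w))+1$ otherwise, which is exactly the claimed formula.

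The edge case $w=w_0$ cannot occur in the ascent case. In the descent case it is covered by the reduction, since $w'$ then has length $n-1<n$.
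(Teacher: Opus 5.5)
Your proposal is correct and follows essentially the same route as the paper's proof. Both write $s_\gamma = w^{-1}s_\alpha w$ as a palindromic alternating word of length $2\ell(w)+1$, which is reduced when this length is at most $n$. Otherwise both factor it as $w_0u$ with $\ell(u)=2\ell(w)+1-n$, and both obtain the descent case by applying the ascent case to $s_\alpha w$.
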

\begin{proof}
    Let $\beta$ be the other simple root. Let $w < s_\alpha w$ so that a reduced word for $w$ is $w = s_\beta s_\alpha \cdots s_r$ for $r \in \{\alpha,\beta\}$.
    
    The first case is straightforward, as 
    \[s_\gamma = s_r \cdots s_\alpha s_\beta (s_\alpha) s_\beta s_\alpha \cdots s_r\]
    is a reduced word if and only if its length is at most $n = \ell(w_0)$. If $2\ell(w) + 1 > n$, then the above expression for $s_\gamma$ can be expressed as $w_0u$ with $\ell(u) = 2\ell(w)+1 - n$. As $\ell(w) < n$ it follows that $\ell(u) < n$, thus $\ell(s_\gamma) = n - \ell(u) = 2(n-\ell(w)) - 1$.

    The result for $w > s_\alpha w$ can be achieved by replacing $\ell(w)$ with $\ell(s_\alpha w) = \ell(w) - 1$.
\end{proof}

\begin{lemma}\label{lem:simple_degree_changes}
Consider $w \in I_2(n)$, a simple root $\alpha$, and say that $w < s_\alpha w$. Let $H$ be a Hessenberg space. The following hold.
\begin{enumerate}
    \item If $(w,s_\alpha w) \notin \ce_H$, then $\hinv(w) = \hinv(s_\alpha w)$.
    \item If $(w,s_\alpha w) \in \ce_H$, then $\hlen(w) + 1 = \hlen(s_\alpha w)$.
    \item If $(w,s_\alpha w)\in\ce_H$, then one of the following holds:
    \begin{itemize}
        \item $(v,s_\alpha v)\in\ce_H$ for every $v$ with
        $\ell(v)\leq \ell(w)$, occurring when $2\ell(w) + 1 \leq n$, or
        \item $(v,s_\alpha v)\in\ce_H$ for every $v$ with
        $\ell(v) > \ell(w)$ otherwise.
    \end{itemize}
    \item If $(w,s_\alpha w) \in \ce_H$ where $(v,s_\alpha v) \in \ce_H$ for all $v$ where $\ell(v) \leq \ell(w)$ and $2\ell(w) + 1 \leq n$, then $\hlen(w) = \ell(w)$.
\end{enumerate} 
\end{lemma}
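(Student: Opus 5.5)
The plan is to reduce everything to two facts already recorded: the inversion-set recursion of Lemma~\ref{lem:rank_two_bruhat_facts}(3), and the length formula of Lemma~\ref{lem:leftright_length}. These will be combined with the observation that, by the Remark following Definition~\ref{def:root_poset}, a Hessenberg space $H$ is a lower order ideal for the comparison $\ell(s_\delta)<\ell(s_\gamma)$. Concretely, if $\gamma\in H$ and $\delta\in\Phi^+$ satisfies $\ell(s_\delta)<\ell(s_\gamma)$, then $\delta\in H$. Throughout, let $\gamma\in\Phi^+$ be the root with $s_\alpha w=ws_\gamma$. Then $(w,s_\alpha w)\in\ce_H$ exactly when $\gamma\in H$.

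For (1) and (2), I will apply Lemma~\ref{lem:rank_two_bruhat_facts}(3) to get $N_R(s_\alpha w)=\{w^{-1}(\alpha)\}\sqcup N_R(w)$. Since $\ell(s_\alpha w)>\ell(w)$, the root $w^{-1}(\alpha)$ is positive, and $s_{w^{-1}(\alpha)}=w^{-1}s_\alpha w=s_\gamma$, so $w^{-1}(\alpha)=\gamma$. Intersecting with $H$ gives
\[\hinv(s_\alpha w)=(H\cap\{\gamma\})\sqcup\hinv(w).\]
This yields (1) when $\gamma\notin H$ and (2) when $\gamma\in H$.

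For (3), I will assume $\gamma\in H$ and treat the two regimes of Lemma~\ref{lem:leftright_length} separately. For each $v$, let $\gamma_v$ be the root with $s_\alpha v=vs_{\gamma_v}$; the goal is to show $\gamma_v\in H$, either by $\gamma_v=\gamma$ or by $\ell(s_{\gamma_v})<\ell(s_\gamma)$. In the first regime, $2\ell(w)+1\le n$ and $\ell(s_\gamma)=2\ell(w)+1$.
\begin{itemize}
\item If $\ell(v)<\ell(w)$, Lemma~\ref{lem:leftright_length} gives $\ell(s_{\gamma_v})\le 2\ell(v)+1<\ell(s_\gamma)$.
\item If $\ell(v)=\ell(w)$ and $v\ne w$, then $v$ is the other alternating word, which begins with $s_\alpha$. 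Hence $s_\alpha v<v$ and $\ell(s_{\gamma_v})=2\ell(v)-1<\ell(s_\gamma)$.
\item The case $v=w$ is trivial, and $v=e$ gives the simple root $\alpha\in H$.
\end{itemize}
In the second regime, $\ell(s_\gamma)=2(n-\ell(w))-1$, and I run the mirror argument for $\ell(v)>\ell(w)$.
\begin{itemize}
\item If $v<s_\alpha v$, then $2\ell(v)+1>n$ and $\ell(s_{\gamma_v})=2(n-\ell(v))-1<\ell(s_\gamma)$.
\item If $v>s_\alpha v$ and $\ell(v)\ge\ell(w)+2$, the formula gives $\ell(s_{\gamma_v})\le 2(n-\ell(v))+1<\ell(s_\gamma)$.
\item If $v>s_\alpha v$ and $\ell(v)=\ell(w)+1$, then $v=s_\alpha w$ (the other element of that length begins with $s_\beta$), so $\gamma_v=\gamma$.
\item The case $v=w_0$ gives $\gamma_v=w_0^{-1}(\pm\alpha)$, which is simple and hence in $H$.
\end{itemize}

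I expect the main obstacle to be this second regime, and specifically the boundary where the alternative formula $2\ell(v)-1\le n$ applies with $\ell(v)$ near $n/2$. There I must check by hand that the length does not exceed $2(n-\ell(w))-1$, using $2\ell(w)\ge n$. The same care is needed to avoid the equal-length trap: in the second family $\{\alpha_1,\dots,\alpha_{2k-2},\alpha_{2k}\}$, $H$ need not contain every reflection of length $\ell(s_\gamma)$. So every equal-length case must be shown to be literally $\gamma$.

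For (4), I will not induct along edges, since only $\alpha$-edges are guaranteed. Instead, I will argue directly that $N_R(w)\subseteq H$, which gives $\hlen(w)=\abs{H\cap N_R(w)}=\abs{N_R(w)}=\ell(w)$. By iterating Lemma~\ref{lem:rank_two_bruhat_facts}(3), the reflections in $N_R(w)$ are conjugates of the letters of the reduced word of $w$ by its prefixes. Their lengths are $1,3,\dots,2\ell(w)-1$ (all at most $n$), each strictly less than $\ell(s_\gamma)=2\ell(w)+1$. The lower-ideal property then places all of them in $H$.
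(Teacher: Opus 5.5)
Your proposal is correct and follows the paper's proof essentially step for step. Parts (1)--(2) use $N_R(s_\alpha w)=\{w^{-1}(\alpha)\}\sqcup N_R(w)$ with $w^{-1}(\alpha)=\gamma$. Part (3) uses Lemma~\ref{lem:leftright_length} together with the lower-ideal property, identifying every equal-length case as $\gamma$ itself. Part (4) shows $N_R(w)\subseteq H$; the only difference there is that you compute the right-inversion reflection lengths $1,3,\ldots,2\ell(w)-1$ exactly, where the paper uses the bound $\ell(w^{-1}s_\zeta w)<2\ell(w)$.

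There are two harmless slips. Those reflections are conjugates by suffixes of the reduced word, not prefixes. Also, the boundary case you worry about in the second regime never occurs, because $\ell(v)>\ell(w)$ already forces $2\ell(v)-1\geq 2\ell(w)+1>n$.
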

\begin{proof}
    (1) Since $w < s_\alpha w$, we know that $\alpha$ is a left descent of $s_\alpha w$. Since $(w,s_\alpha w)\notin \ce_H$, we know that $w^{-1}(\alpha) \notin H$. Recall that $N_R(w)$ is the set of right inversions of $w$, and  $N_R(s_\alpha w) = N_R(w) \cup \{w^{-1}(\alpha)\}$ by Lemma \ref{lem:rank_two_bruhat_facts}, and so the intersection $N_R(w) \cap H = \hinv(w)$ does not change. Essentially the exact same logic applies to prove (2), where $w^{-1}(\alpha) \in H$.\par 
    (3) If $s_\alpha w = ws_\gamma$, then $s_\gamma = w^{-1}s_\alpha w$. By Lemma \ref{lem:leftright_length}
    \[\ell(s_\gamma) = \begin{cases}
        2\ell(w) + 1 &\text{if }2\ell(w)+1 \leq n \\
        2(n-\ell(w)) - 1 &\text{otherwise.}
    \end{cases}\]
    Since $H$ is a lower order ideal for $I_2(n)$, $\gamma \in H$ implies $\gamma' \in H$ for all $\gamma'$ such that $\ell(s_{\gamma'}) < \ell(s_\gamma)$. Assume first that $\ell(s_\gamma) = 2\ell(w)+1$. If $\ell(v) \leq \ell(w)$ then
    \[\ell(v^{-1}s_\alpha v) \leq 2\ell(v) + 1 \leq 2\ell(w) + 1 \leq n.\]
    Thus $\ell(v^{-1}s_\alpha v) = 2\ell(v) \pm 1 \leq \ell(s_\gamma)$, with equality only when $v=w$,
    and so $(v,s_\alpha v) \in \ce_H$. 
    
    Now assume $2\ell(w)+1 > n$ and consider all $v$ where $\ell(v) > \ell(w)$. If $\ell(v) \geq \ell(w) + 2$ then \[\ell(v^{-1} s_\alpha v) = 2(n-\ell(v)) \pm 1 < 2(n-\ell(w))-1 = \ell(s_\gamma).\] 
    If $\ell(v) = \ell(w)+1$, there are two options. Either $v = s_\alpha w$, in which case $(v,s_\alpha v) \in \ce_H$ by assumption. If $v \neq s_\alpha w$ is the other element of length $\ell(w) + 1$, then $v < s_\alpha v$, in which case $\ell(v^{-1} s_\alpha v) = 2(n-\ell(v)) - 1$ and the required inequality holds. 

    (4) This also leverages Lemma \ref{lem:leftright_length}. If $\zeta\in N_L(w)$ and $\gamma \in \Phi^+$ is such that $s_\zeta w =ws_\gamma$, then 
    \[\ell(s_\gamma) = \ell(w^{-1}s_\zeta w) \leq \ell(w^{-1}) + \ell(s_\zeta w) < 2\ell(w) < \ell(w^{-1}s_\alpha w),\]
    and so $\gamma \in H$ and $(w,s_\zeta w) \in \ce_H$. Thus $N_R(w) \subseteq H$, so $\hlen(w)=\abs{N_R(w)}=\ell(w)$.
    \end{proof}

\begin{corollary}\label{cor:existence_upper_elts}
    Let $H$ be a Hessenberg space. If $w \in I_2(n)$ with $2\ell(w)+1 > n$, $\alpha$ is a simple root such that $w < s_\alpha w$ and $(v,s_\alpha v) \in \ce_H$ for all $v \geq w$, then there exists a flow-up spline at $w$.
\end{corollary}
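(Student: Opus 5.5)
The plan is downward induction on $\ell(w)$ over the elements with $2\ell(w)+1>n$, producing the flow-up spline at $w$ as $\partial_\alpha$ of a flow-up spline at $u\coloneqq s_\alpha w$.

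\textbf{Base case.} If $\ell(w)=n-1$, then $u=w_0$. Take $\spl{\rho}_{w_0}$ supported only at $w_0$ with
\[
\spl{\rho}_{w_0}(w_0)=\prod_{\gamma\in H}t_{w_0(\gamma)} .
\]
This is a spline because every edge of $\cg_H$ at $w_0$ has its label among these factors. Its degree is $\abs{H}=\hlen(w_0)$.

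\textbf{Inductive step: the hypothesis transfers to $u$.} Suppose $\ell(w)<n-1$. I claim $u$ satisfies the hypotheses of the corollary with the other simple root $\beta$. We have $2\ell(u)+1>n$, and $u<s_\beta u$ because $\alpha$ is the left descent of $u$. For $v\geq u$ with $v\neq w_0$ and $v<s_\beta v$, Lemma~\ref{lem:leftright_length} gives that the root $\gamma'$ with $s_\beta v=vs_{\gamma'}$ has $\ell(s_{\gamma'})=2(n-\ell(v))-1$. Since $\ell(v)>\ell(w)$, this is strictly less than $2(n-\ell(w))-1$. That number is the length of the reflection $s_\gamma$ with $s_\alpha w=ws_\gamma$, and $\gamma\in H$ by hypothesis. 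Because $H$ is a lower order ideal in the root poset, and the root poset matches reflection length, we get $\gamma'\in H$. The remaining edges at such $v$ (those with $v>s_\beta v$, and the case $v=w_0$) are the same unoriented edges as $(s_\beta v,v)$. When $s_\beta v\geq u$, these are handled by the previous sentence. When $s_\beta v\not\geq u$, we have $\ell(v)=\ell(u)$ and $s_\beta v=w$ or $s_\beta v$ is the other element of length $\ell(w)$; this case should follow from Lemma~\ref{lem:simple_degree_changes}(3) or a similar length count. Checking these boundary edges is the step I expect to be most delicate. With the hypotheses in place, induction gives a flow-up spline $\spl{\rho}_u$ at $u$, supported on $\{v\geq u\}\subseteq\{v\geq w\}$.

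\textbf{Defining the spline at $w$.} Set $\spl{\rho}_w\coloneqq\partial_\alpha\spl{\rho}_u$. This lies in $\Ms$ by Lemma~\ref{lem:generalized_divided_difference}, since $(v,s_\alpha v)\in\ce_H$ for all $v$ in the support by hypothesis.

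\textbf{Checking the flow-up properties.}
\begin{enumerate}
\item Degree: $\deg\spl{\rho}_w=\hlen(u)-1=\hlen(w)$ by Lemma~\ref{lem:simple_degree_changes}(2), applied to the edge $(w,s_\alpha w)\in\ce_H$.
\item Support: $\spl{\rho}_w(v)\neq0$ forces $v\geq u$ or $s_\alpha v\geq u$. The second case gives $\ell(v)\geq\ell(w)$. If $\ell(v)=\ell(w)$ and $v\neq w$, then $s_\alpha v$ has length $\ell(w)-1$, which is impossible by Lemma~\ref{lem:rank_two_bruhat_facts}(2). Hence $v\geq w$.
\item Nonvanishing at $w$: since $\spl{\rho}_u(w)=0$,
\[
\spl{\rho}_w(w)=-\frac{s_\alpha\bigl(\spl{\rho}_u(u)\bigr)}{t_\alpha}\neq0 .
\]
\end{enumerate}
This gives a flow-up spline at $w$.
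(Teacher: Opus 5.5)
Your proposal is correct and is essentially the paper's argument: the same chain of divided differences descending from $\spl{\rho}_{w_0}$ down to $w$, with the needed edges certified by the reflection-length formula of Lemma~\ref{lem:leftright_length}. The only difference is packaging: you induct on the corollary itself, so that the hypothesis of Lemma~\ref{lem:generalized_divided_difference} at each step is literally the corollary's hypothesis, whereas the paper checks the ascent edges once and then re-verifies the support edges at each step.

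The boundary case you flag as delicate is in fact empty (and your description of it is off). Suppose $v\geq u$ and $s_\beta v<v$. Then $v\neq u$ because $u<s_\beta u$, so $\ell(s_\beta v)\geq\ell(u)$. Either this inequality is strict, and $s_\beta v\geq u$ by Lemma~\ref{lem:rank_two_bruhat_facts}(2), or $s_\beta v$ and $u$ are elements of the same length that both have $\beta$ as left ascent, which forces $s_\beta v=u$.
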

\begin{proof}
We can assume that $w \notin \{e,w_0\}$. Consider an arbitrary $v \in I_2(n)$ such that $w \leq v < w_0$. We first show that if $s_\gamma$ is the left-ascent of $v$ with $\gamma \in \{\alpha, \beta\}$, then $(v,s_\gamma v) \in \ce_H$. If $\gamma = \alpha$, then the edge $(v,s_\gamma v) \in \ce_H$ by assumption. 

If $s_\beta v > v$, then $v \neq w$. In particular, $\ell(v) > \ell(w)$, so Lemma \ref{lem:leftright_length} gives us that $\ell(v^{-1}s_\beta v)$ is strictly less than $\ell(w^{-1}s_\alpha w)$. 
So if $s_\beta v = vs_\zeta$ and $s_\alpha w = ws_\nu$, it follows that $\ell(s_\zeta) < \ell(s_\nu)$ and thus $(v,s_\beta v) \in \ce_H$.

So $(v,s_\gamma v) \in \ce_H$ for all $s_\gamma v > v \geq w$ and $\gamma \in \{\alpha,\beta\}$.
We proceed by induction on $\ell(v)$ backwards from $\ell(w_0) = n$, where such a spline exists:
\begin{equation}\label{eq:top_spline}
    \spl{\rho}_{w_0}(v) \coloneqq \begin{cases}
    \ds \prod_{\alpha \in H} t_{w_0(\alpha)} & \text{if }v = w_0 \\ 
    0 & \text{otherwise.}
\end{cases}
\end{equation}
Say that such a spline exists for all $v'$ such that $\ell(v') > \ell(v)$. If $\gamma \in \{\alpha,\beta\}$ is such that $s_\gamma v > v$, our assumption guarantees that $(v,s_\gamma v) \in \ce_H$. Our induction assumption gives a spline $\spl{\rho}_{s_\gamma v}$ supported at $v' = s_\gamma v$ and above of degree $\hlen(s_\gamma v)$. If $u \in \supp(\spl{\rho}_{s_\gamma v})-\{s_\gamma v\}$, then $\ell(u) > \ell(s_\gamma v)$, so $\ell(u^{-1}s_\gamma u) < \ell(v^{-1} s_\gamma v)$ (even if $s_\gamma u < u$) implying $(u, s_\gamma u) \in \ce_H$.
So we can apply Lemma \ref{lem:generalized_divided_difference} to get a spline $\partial_\gamma \spl{\rho}_{s_\gamma v}$ of degree $\hlen(s_\gamma v) - 1$, which is precisely $\hlen(v)$ by Lemma \ref{lem:simple_degree_changes}(2). This spline $\partial_\gamma \spl{\rho}_{s_\gamma v}$ is nonzero at $v$ since \[\partial_\gamma \spl{\rho}_{s_\gamma v}(v)=-s_\gamma\bigl(\bar\rho_{s_\gamma v}(s_\gamma v)\bigr)/t_\gamma\neq 0.\] By assumption, $\spl{\rho}_{s_\gamma v}$ is supported only at $s_\gamma v$ and above. If $u\geq s_\gamma v$ and $u\neq s_\gamma v$, then $\ell(s_\gamma u)\ge\ell(v)+1$, so $s_\gamma u\geq v$ by Lemma~\ref{lem:rank_two_bruhat_facts}(2), and thus $s_\gamma \cdot \spl{\rho}_{s_\gamma v}$ is also supported only at $v$ and above. Therefore, the spline $\partial_\gamma \spl{\rho}_{s_\gamma v}$ is supported only at $v$ and above. The induction argument terminates at $v = w$ and $\gamma = \alpha$, and we have the claim.
\end{proof}

\begin{theorem}\label{thm:flowup}
    Fix a Hessenberg space $H \subseteq \Phi^+$. Then  $\Ms$ has a flow-up basis $\{\spl{\rho}_w \mid w \in I_2(n)\}$, given by
    \[\spl{\rho}_w = \begin{cases}
        \spl{\one} &\text{if }w = e \\
        \spl{\rho}_{w_0}\text{ from } \eqref{eq:top_spline} &\text{if }w = w_0 \\
        \spl{\sigma}_w \text{ from } \eqref{eq:schubert_recursion} & \text{if } (w,s_\alpha w)\in\ce_H \text{ and } 2\ell(w)+1\leq n,\\
    \partial_\alpha\spl{\rho}_{s_\alpha w} & \text{if } (w,s_\alpha w)\in\ce_H \text{ and } 2\ell(w)+1> n,\\
    s_\alpha\cdot\spl{\rho}_{s_\alpha w} & \text{if } (w,s_\alpha w)\notin\ce_H,
    \end{cases}\]
    where $\alpha$ is the unique left simple ascent of $w\notin \{e,w_0\}$.
\end{theorem}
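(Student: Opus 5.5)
By Lemma~\ref{lem:upper_triangular}, it suffices to check three things for each $w$. First, $\spl{\rho}_w\in\Ms$ and is homogeneous of degree $\hlen(w)$. Second, $\spl{\rho}_w(w)\neq 0$. Third, $\spl{\rho}_w(v)=0$ unless $w\leq v$ in Bruhat order. I would prove these simultaneously by downward induction on $\ell(w)$. The recursion in the statement only refers to $\spl{\rho}_{s_\alpha w}$ with $\ell(s_\alpha w)=\ell(w)+1$, so it is well defined going down from $w_0$. The cases $w=e$ (constant spline, $\hlen(e)=0$) and $w=w_0$ (spline \eqref{eq:top_spline}, supported only at $w_0$, with $\hlen(w_0)=\abs{H}$) are immediate. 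For $0<\ell(w)<n$, Lemma~\ref{lem:rank_two_bruhat_facts}(2) reduces the support condition to: $\spl{\rho}_w(v)=0$ whenever $\ell(v)\leq\ell(w)$ and $v\neq w$.

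\emph{Schubert case} ($(w,s_\alpha w)\in\ce_H$ and $2\ell(w)+1\leq n$). Since $\ce_H\subseteq\ce_{\Phi^+}$, every spline on the full reflection graph satisfies the fewer conditions defining $\Ms$, so $\splines{\Phi^+}\subseteq\Ms$ and hence $\spl{\sigma}_w\in\Ms$. Proposition~\ref{prop:schubert_flowup} gives homogeneity of degree $\ell(w)$, support in the Bruhat upper ideal, and nonvanishing at $w$. Lemma~\ref{lem:simple_degree_changes}(3), first bullet, applies here, so Lemma~\ref{lem:simple_degree_changes}(4) gives $\hlen(w)=\ell(w)$. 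Thus the degree is correct.

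\emph{Dot-action case} ($(w,s_\alpha w)\notin\ce_H$). By Lemma~\ref{lem:dot_action_preserves_splines}, $s_\alpha\cdot\spl{\rho}_{s_\alpha w}$ is a spline of the same degree. That degree is $\hlen(s_\alpha w)=\hlen(w)$ by induction and Lemma~\ref{lem:simple_degree_changes}(1). Its value at $v$ is $s_\alpha\bigl(\spl{\rho}_{s_\alpha w}(s_\alpha v)\bigr)$. At $v=w$ this is $s_\alpha$ applied to a nonzero polynomial, hence nonzero. Otherwise it is nonzero only if $s_\alpha v\geq s_\alpha w$. In that situation either $v=w$, or $\ell(s_\alpha v)\geq\ell(w)+2$, which forces $\ell(v)>\ell(w)$ and so $v\geq w$ by Lemma~\ref{lem:rank_two_bruhat_facts}(2).

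\emph{Divided-difference case} ($(w,s_\alpha w)\in\ce_H$ and $2\ell(w)+1>n$). I expect this to be the main step. The plan is to apply Lemma~\ref{lem:generalized_divided_difference} to $\spl{\rho}=\spl{\rho}_{s_\alpha w}$, which requires $(v,s_\alpha v)\in\ce_H$ for all $v\in\supp(\spl{\rho})$. By induction, every such $v$ satisfies $v\geq s_\alpha w$, hence $\ell(v)>\ell(w)$. Lemma~\ref{lem:simple_degree_changes}(3), second bullet, then gives exactly this edge condition; this is the same mechanism as in Corollary~\ref{cor:existence_upper_elts}. The resulting $\partial_\alpha\spl{\rho}_{s_\alpha w}$ is a spline of degree $\hlen(s_\alpha w)-1=\hlen(w)$ by Lemma~\ref{lem:simple_degree_changes}(2).

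For the support, note that both $\spl{\rho}_{s_\alpha w}$ and $s_\alpha\cdot\spl{\rho}_{s_\alpha w}$ vanish outside the upper ideal of $w$; the second follows from the argument in the dot-action case. Hence so does their difference divided by $t_\alpha$.

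For nonvanishing, $\spl{\rho}_{s_\alpha w}(w)=0$ because $s_\alpha w\not\leq w$. Therefore
\[
\partial_\alpha\spl{\rho}_{s_\alpha w}(w)=-s_\alpha\bigl(\spl{\rho}_{s_\alpha w}(s_\alpha w)\bigr)/t_\alpha\neq 0 .
\]

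This completes the induction, and Lemma~\ref{lem:upper_triangular} then yields that $\{\spl{\rho}_w\}$ is a flow-up basis for both module structures.
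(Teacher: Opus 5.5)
Your proof is correct and follows the paper's argument. It uses the same three-way case split, with Proposition~\ref{prop:schubert_flowup} and Lemma~\ref{lem:simple_degree_changes}(3)--(4) in the Schubert case, Lemma~\ref{lem:simple_degree_changes}(1) in the dot-action case, and Lemma~\ref{lem:generalized_divided_difference} in the divided-difference case, all fed into Lemma~\ref{lem:upper_triangular}. The only difference is in the divided-difference case: the paper simply cites Corollary~\ref{cor:existence_upper_elts}, while you check the edge hypothesis directly from Lemma~\ref{lem:simple_degree_changes}(3) within one downward induction, which also confirms that the specific recursive formula $\partial_\alpha\spl{\rho}_{s_\alpha w}$ in the statement is itself a flow-up spline.
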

\begin{proof}
    Consider $w \in I_2(n)$. The identity spline $\spl{\one}$ is a flow-up spline at $e$ and  $\spl{\rho}_{w_0}$ as in the proof of Corollary \ref{cor:existence_upper_elts} is a flow-up spline at $w_0$. So let $w \notin \{e,w_0\}$ and $\alpha$ be the simple root where $s_\alpha w > w$. 

    If $(w,s_\alpha w)\in\ce_H$, then Lemma~\ref{lem:simple_degree_changes}(3) gives $(v,s_\alpha v)\in\ce_H$ for either (a) all $v$ with $\ell(v)\leq \ell(w)$ when $2\ell(w)+1\le n$, or (b) for all $v$ with $\ell(v)>\ell(w)$ otherwise. If $(w,s_\alpha w) \in \ce_H$ and $2\ell(w)+1 \leq n$, then we take the Schubert spline, which has degree $\hlen(w)$ by Lemma \ref{lem:simple_degree_changes}(4). If $(w,s_\alpha w) \in \ce_H$ and $2\ell(w) + 1 > n$ then we have a flow-up spline at $w$ by Corollary \ref{cor:existence_upper_elts}.

    If $(w,s_\alpha w) \notin \ce_H$, then we can proceed by downwards induction, with the assumption that a flow-up spline exists at all $v > w$. In particular, there is a flow-up spline at $s_\alpha w$. Let $\spl{\rho}$ be the flow-up spline at $s_\alpha w$ and consider $s_\alpha \cdot \spl{\rho}$, which has the appropriate support since $\{s_\alpha v \mid v \geq s_\alpha w\} \subseteq \{v \mid v \geq w\}$ in strong Bruhat order when $s_\alpha w > w$, and the appropriate degree by Lemma \ref{lem:simple_degree_changes}(1), and is thus a flow-up spline at $w$.
\end{proof}

\begin{example}\label{ex:generators_exist}
    If $n = 5$ and $H = \{\alpha_1,\alpha_2,\alpha_3\}$, the labeled Cayley graph $\cg_H$ is given in Figure~\ref{fig:generators_exist}.
    Theorem \ref{thm:flowup} guarantees the existence of splines:
    \begin{equation*}
        \begin{matrix}
            \spl{\rho}_e &\text{supported on}& I_2(n) & \text{given by} & \spl{\sigma}_e = \spl{\one} \\
            \spl{\rho}_{s_1} &\text{supported weakly above}& s_1&\text{given by} & \spl{\sigma}_{s_1} \\
            \spl{\rho}_{s_2} &\text{supported weakly above}& s_2&\text{given by} & s_1s_2 \cdot \partial_1\partial_2 \spl{\rho}_{w_0}\\
            \spl{\rho}_{s_1s_2} &\text{supported weakly above}& s_1s_2&\text{given by} & s_2 \cdot \partial_1\partial_2 \spl{\rho}_{w_0} \\
            \spl{\rho}_{s_2s_1} &\text{supported weakly above}& s_2s_1&\text{given by} & s_1s_2 \cdot \partial_1\spl{\rho}_{w_0}\\
            \spl{\rho}_{s_1s_2s_1} &\text{supported weakly above}& s_1s_2s_1&\text{given by} & s_2 \cdot \partial_1 \spl{\rho}_{w_0}\\
            \spl{\rho}_{s_2s_1s_2} &\text{supported weakly above}& s_2s_1s_2&\text{given by} & \partial_1\partial_2 \spl{\rho}_{w_0}\\
            \spl{\rho}_{(s_1s_2)^2} &\text{supported weakly above}& (s_1s_2)^2&\text{given by} & \partial_2 \spl{\rho}_{w_0}\\
            \spl{\rho}_{(s_2s_1)^2} &\text{supported weakly above}& (s_2s_1)^2&\text{given by} & \partial_1 \spl{\rho}_{w_0}\\
            \spl{\rho}_{w_0} &\text{supported only at}& (s_1s_2)^2s_1 &\text{given by} & \spl{\rho}_{w_0}(w_0) = -t_{\alpha_1}t_{\alpha_2}t_{\alpha_4}.
        \end{matrix}
    \end{equation*}
    It follows that the rank generating function for $\Ms$ is $1+4q+4q^2+q^3$.
\end{example}

\begin{figure}
    \begin{tikzpicture}
        \begin{scope}[xshift=0in, scale=1.6]
            \tikzstyle{every node}=[shape=rectangle, inner sep=2pt]
            \draw  (0,0.2) node[draw] (v0) {$e$};
            \draw  (1.2,1) node[draw]  (v1) {$s_2$};
            \draw  (-1.2,1) node[draw] (v2) {$s_1$};
            \draw  (-1.5,2) node[draw] (v3) {$s_1 s_2$};
            \draw  (1.5,2) node[draw] (v4) {$s_2 s_1$};
            \draw (1.5,3) node[draw] (v5) {$s_2 s_1 s_2$};
            \draw (-1.5,3) node[draw] (v6) {$s_1 s_2s_1$};
            \draw (-1.2,4)  node[draw]  (v7) {$(s_1 s_2)^2$};
            \draw (1.2,4)  node[draw]  (v8) {$(s_2 s_1)^2$};
            \draw (0,4.8)  node[draw]  (v9) {$(s_1 s_2)^2s_1$};
    
            \draw[thick] (v0)--(v1)node[midway,right] {\textcolor{blue}{ $\;\;\alpha_2$}};
            \draw[thick] (v1)--(v4)node[midway,right] {\textcolor{blue}{ $\alpha_4$}};
            \draw[thick] (v4)--(v5)node[midway,right] {\textcolor{blue}{ $\alpha_5$}};
            \draw[thick] (v5)--(v8)node[midway,right] {\textcolor{blue}{ $\alpha_3$}};
            \draw[thick] (v8)--(v9)node[midway,right] {\textcolor{blue}{ $\;\alpha_1$}};
            \draw[thick] (v9)--(v7)node[midway,left] {\textcolor{blue}{ $\alpha_2\;$}};
            \draw[thick] (v7)--(v6)node[midway,left] {\textcolor{blue}{ $\;\;\;\alpha_4$}};
            \draw[thick] (v6)--(v3)node[midway,left] {\textcolor{blue}{ $\;\;\;\alpha_5$}};
            \draw[thick] (v3)--(v2)node[midway,left] {\textcolor{blue}{ $\;\;\;\alpha_3$}};
            \draw[thick] (v2)--(v0)node[midway,left] {\textcolor{blue}{ $\alpha_1\;$}};
            
            \draw[thick] (v0) to[bend right=15] (v6);
            \draw (-0.5,2) node (ll1) {\textcolor{blue}{$\alpha_3$}};
        
            \draw[thick] (v1) to[bend left=30] (v8);
            \draw (0.5,2.5) node (ll1) {\textcolor{blue}{$\alpha_5$}};
            
            \draw[thick] (v2) to (v4);
            \draw (0.3,1.35) node (ll1) {\textcolor{blue}{$\alpha_2$}};
            
            \draw[thick] (v3) to[bend right=20] (v9);
            \draw (-0.4,3) node (ll1) {\textcolor{blue}{$\alpha_4$}};
            
            \draw[thick] (v5) to (v7);
            \draw (0.3,3.6) node (ll1) {\textcolor{blue}{$\alpha_1$}};
        
        \end{scope}
    \end{tikzpicture}
    \caption{The labeled Cayley graph $\cg_H$ in Example~\ref{ex:generators_exist}.}
    \label{fig:generators_exist}
\end{figure}

\begin{corollary}\label{cor:rank_gf}
    The rank generating function for $\Ms$ is palindromic and given by \[P_H(q) = \sum_{w \in I_2(n)} q^{\hlen(w)}.\]

    If $\abs{H} = 2h$ then 
    \begin{align*}
        P_H(q) &= 
        \ds 1 + 2q + \cdots + 2q^{h-1} + (2n-4h+2)q^h + 2q^{h+1} + \cdots + 2q^{2h-1} + q^{2h} \\
        &= (1+q)(1+q+\cdots+q^{2h-1}) + (2n-4h)q^h.
    \end{align*}
    If $\abs{H} = 2h+1$ then 
    \begin{align*}
        P_H(q) &= 1 + 2q + \cdots + 2q^{h-1} + (n-2h+1)\left(q^{h} + q^{h+1}\right) + 2q^{h+2} + \cdots  + 2q^{2h} + q^{2h+1}\\
        &= (1+q)(1+q+\cdots+q^{2h}) + (n-2h-1)(q^h+q^{h+1})
    \end{align*}
    Furthermore, the Hilbert series of $\Ms$ is 
    \[\Hilb(\Ms;q) = \frac{P_H(q)}{(1-q)^2}.\]
\end{corollary}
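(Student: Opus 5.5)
The first claim follows from results already in hand. By Theorem~\ref{thm:flowup}, $\Ms$ has a flow-up basis $\{\spl{\rho}_w\}$. Each $\spl{\rho}_w$ is upper triangular in Bruhat order and has degree $\hlen(w)$. Lemma~\ref{lem:upper_triangular} then says that $\Ms$ is free, for either the left or the right action, with rank polynomial $P_H(q)=\sum_w q^{\hlen(w)}$. Since $\Hilb(\poly;q)=1/(1-q)^2$ and a free graded module with homogeneous basis in degrees $d_w$ has Hilbert series $\sum_w q^{d_w}/(1-q)^2$, the Hilbert series formula follows at once.

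For palindromicity I will use the involution $w\mapsto w_0w$. Since $w_0$ sends $\Phi^+$ bijectively onto $\Phi^-$, a root $\alpha\in H$ satisfies $(w_0w)(\alpha)\in\Phi^-$ exactly when $w(\alpha)\in\Phi^+$. Using the description of $\hinv$ as the roots of $H$ sent negative (Definition~\ref{def:inversion_set} together with the inversion criterion in the proof of Lemma~\ref{lem:rank_two_bruhat_facts}(3)), this gives $\hinv(w_0w)=H\setminus\hinv(w)$. Hence $\hlen(w_0w)=\abs{H}-\hlen(w)$, and therefore $q^{\abs{H}}P_H(q^{-1})=P_H(q)$.

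For the explicit coefficients I will compute $\hlen$ directly. Order the positive roots angularly as
\[
\alpha_1,\alpha_3,\alpha_5,\ldots,\alpha_6,\alpha_4,\alpha_2 .
\]
For $w\neq e,w_0$ of length $k$, the set $N_R(w)$ is built up by Lemma~\ref{lem:rank_two_bruhat_facts}(3) one root at a time along the reduced word. I expect it to be an angular interval of $k$ consecutive positive roots beginning at one end: the $\alpha_1$ end for one of the two elements of length $k$, and the $\alpha_2$ end for the other. Equivalently, the root added at step $j$ has reflection of length $2j-1$, as in Lemma~\ref{lem:leftright_length}. Checking this interval description carefully is the main bookkeeping step.

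With that in place, the list of possible Hessenberg spaces in the Root Poset subsection shows that $H$ consists of the first $a$ roots from the $\alpha_1$ end and the first $b$ roots from the $\alpha_2$ end. Here $\{a,b\}=\{h,h\}$ if $\abs{H}=2h$, and $\{a,b\}=\{h+1,h\}$ if $\abs{H}=2h+1$. For the element whose inversion interval has length $k$ from the $\alpha_1$ end,
\[
\hlen=\min(k,a)+\max\bigl(0,\,k-(n-b)\bigr),
\]
and the same holds with $a$ and $b$ swapped for the other element. Summing over $k=1,\ldots,n-1$ for both elements, and adding $e$ (contributing $1$) and $w_0$ (contributing $q^{\abs{H}}$), gives the stated coefficients. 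When $\abs{H}=2h$, lengths $h\le k\le n-h$ all give value $h$, producing $2(n-2h+1)=2n-4h+2$ copies of $q^h$. When $\abs{H}=2h+1$, the plateaus at $h$ and $h+1$ have $n-2h+1$ elements each. The factored forms $(1+q)(1+q+\cdots+q^{2h-1})+(2n-4h)q^h$ and $(1+q)(1+q+\cdots+q^{2h})+(n-2h-1)(q^h+q^{h+1})$ are then routine algebra.
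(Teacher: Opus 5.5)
Your proof is correct. For freeness, the Hilbert series, and palindromicity it is the same as the paper's: the flow-up basis of Theorem~\ref{thm:flowup} together with Lemma~\ref{lem:upper_triangular}, and the involution $w\mapsto w_0w$ with $\hinv(w_0w)=H\setminus\hinv(w)$.

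For the explicit coefficients you take a different route. The paper computes only the low coefficients. If a reduced word for $w$ ends in $s_\alpha$, then the reflections $(s_\alpha s_\beta)^k s_\alpha$ with $0\le k<\ell(w)$ are right inversions of $w$. Hence $\hlen(w)=\ell(w)$ when $\ell(w)<h$, and $\hlen(w)\ge h$ otherwise, which gives the initial segment $(1,2,\ldots,2)$. Palindromicity, degree $\abs{H}$, and total coefficient sum $2n$ then force the middle coefficient(s) uniquely.

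You instead compute $\hlen$ for every element through the interval formula $\min(k,a)+\max(0,k-(n-b))$. This makes palindromicity logically unnecessary for the coefficient count; it becomes a consistency check. It also gives strictly more information, namely exactly which elements lie on the plateau in degree $h$ (and $h+1$). That is the kind of information the paper later has to extract by hand in the proof of Theorem~\ref{thm:left_representation}. The cost is the bookkeeping you flagged.

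One small correction to that bookkeeping. The root added at step $j$ has reflection of length $2j-1$ only while $2j-1\le n$; otherwise Lemma~\ref{lem:leftright_length} gives length $2(n-j)+1$. Also, reflection lengths alone do not distinguish $\alpha_{2m-1}$ from $\alpha_{2m}$. So your ``equivalently'' is not literally equivalent to the interval claim.

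A clean way to verify the interval claim is as follows. For a word ending in $s_1$, the $j$-th added root has reflection $(s_1s_2)^{j-1}s_1$. This equals $s_{\alpha_{2j-1}}$ when $2j-1\le n$. Otherwise, using $(s_1s_2)^n=e$, it equals $(s_2s_1)^{n-j}s_2=s_{\alpha_{2(n-j)+2}}$. In both cases this is the $j$-th root counted from the $\alpha_1$ end of the angular order.
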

\begin{proof}
    The first formulation follows directly from Theorem \ref{thm:flowup}. As for being palindromic, the longest element satisfies $w_0(\Phi^+) = \Phi^-$, so $\hinv(w_0w) = H \setminus \hinv(w)$ for all $w \in I_2(n)$ and $w \mapsto w_0w$ gives an $H$-inversion reversing involution.

    Now we compute the first $h$ coefficients of $P_H(q)$. Clearly $P_H(0) = 1$. Let $w \in I_2(n)$ be a non-identity element and say without loss of generality that a reduced word for $w$ ends in $s_\alpha$. Then $w(s_\alpha s_\beta)^ks_\alpha < w$ for all $0 \leq k < \ell(w)$. Specifically, if $\ell(w) < h$ then $\ell(w) = \hlen(w)$ and if $\ell(w) \geq h$ then $\hlen(w)\geq h$. Thus the coefficient of $q^k$ for $k \in [h-1]$ is $2$, the number of $w \in I_2(n)$ with length $k$, and so the initial segment of $h$ coefficients is $(1,2,\ldots,2)$. The polynomials above are the unique palindromic polynomials of degree $\abs{H}$ with coefficient sum $2n$ that have this initial segment of coefficients.
\end{proof}

\section{Properties of the Flow-up Basis}\label{sec:properties}

This section will prove two main results. The first is a concrete description of $\spl{\rho}_w(v)$ for all $v$ and $w$. The second is a linear independence result on particular orbits $I_2(n) \cdot \spl{\rho}_w$ that will be critical for our computations in Sections \ref{sec:left_rep} and \ref{sec:right_rep}.

\begin{lemma}\label{lem:regular}
    Consider $G = \{w,ws_1,ws_1s_2,\ldots,w(s_1s_2)^{k}s_1\}$. If $\alpha_{2k+1} \in H$ then the subgraph of $\cg_H$ induced by $G$ is regular.\par 
    Similarly, if $G = \{w,ws_2,\ldots,w(s_2s_1)^{k}s_2\}$ and $\alpha_{2k+2} \in H$, then the subgraph of $\cg_H$ induced by $G$ is regular.
\end{lemma}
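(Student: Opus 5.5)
The plan is to reduce to $w=e$, identify every edge of the induced subgraph explicitly, and observe that it is the complete bipartite graph $K_{k+1,k+1}$, which is regular of degree $k+1$.

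\emph{Reduction to $w=e$.} Edges of $\cg_H$ have the form $(u,us_\gamma)$ with $\gamma\in H$. Left multiplication $u\mapsto w^{-1}u$ sends $(u,us_\gamma)$ to $(w^{-1}u,w^{-1}us_\gamma)$, so it is a graph automorphism of the unlabeled Cayley graph. Since regularity ignores labels, I may assume $w=e$. Then $G=\{p_0,p_1,\dots,p_{2k+1}\}$, where $p_j$ is the prefix of length $j$ of the alternating word $s_1s_2s_1\cdots$.

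\emph{Setup and distinctness.} Since $\alpha_{2k+1}$ exists, $2k+1\leq n$. Hence every alternating word of length at most $2k+1$ is reduced. In particular $\ell(p_j)=j$, and the $p_j$ are $2k+2$ distinct elements.

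\emph{Identifying the edges.} For $i<j$, the element $p_i^{-1}p_j$ is the alternating word of length $m=j-i$ made of letters $i+1,\dots,j$.
\begin{itemize}
\item If $m$ is even, $p_i^{-1}p_j$ is a nontrivial rotation, not a reflection, so $(p_i,p_j)$ is not an edge.
\item If $m$ is odd, $p_i^{-1}p_j$ is $(s_1s_2)^as_1=s_{\alpha_{2a+1}}$ or $(s_2s_1)^as_2=s_{\alpha_{2a+2}}$ with $2a+1=m$. Because $m\le n$ the word is reduced, so this reflection has length $m\leq 2k+1$. Thus $p_j=p_is_\gamma$ for a positive root $\gamma$ with $\ell(s_\gamma)=m$.
\end{itemize}

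\emph{Membership in $H$.} First suppose $m<2k+1$. The Remark after Definition~\ref{def:root_poset} gives $\alpha_i<\alpha_j$ in the root poset if and only if $\ell(s_{\alpha_i})<\ell(s_{\alpha_j})$. So $\gamma<\alpha_{2k+1}$, and $\gamma\in H$ because $H$ is a lower order ideal. The only pair with $m=2k+1$ is $(i,j)=(0,2k+1)$. Its word starts with $s_1$, so $\gamma=\alpha_{2k+1}\in H$ by hypothesis.

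\emph{Conclusion.} The induced subgraph therefore has an edge between $p_i$ and $p_j$ exactly when $j-i$ is odd. This is the complete bipartite graph on even versus odd indices, with $k+1$ vertices on each side. Hence it is regular of degree $k+1$.

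\emph{The second statement.} The argument is identical with $s_1$ and $s_2$ swapped. The single pair of maximal length $2k+1$ is now the word $(s_2s_1)^ks_2=s_{\alpha_{2k+2}}$, which lies in $H$ by hypothesis. All other pairs give reflections of smaller length, which lie below $\alpha_{2k+2}$ and hence in $H$.

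\emph{Main obstacle.} The only delicate point is bookkeeping the maximal-length pair. The length-$(2k+1)$ reflection starting with the other letter may fail to lie in $H$, but no pair in $G$ produces it. That is exactly why the starting letter of $G$ must match the parity of the index of the hypothesized root.
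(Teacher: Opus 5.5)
Your proof is correct and takes essentially the same route as the paper. Both identify the induced subgraph as the complete bipartite graph between even- and odd-indexed vertices: $v_i^{-1}v_j$ is an alternating word of length $|j-i|$, which is a reflection in $H$ exactly when $|j-i|$ is odd, and the single pair of length $2k+1$ gives $s_{\alpha_{2k+1}}$ itself. Your reduction to $w=e$ and your separate treatment of that maximal-length pair are only slightly more explicit bookkeeping than the paper's version.
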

\begin{proof}
    The root $\alpha_{2k+1}$ is associated to the reflection $(s_1s_2)^{k}s_1$. If $\alpha_{2k+1} \in H$ then $\alpha_{i} \in H$ for all $i \leq 2k$. In particular, an edge exists in $\cg$ for right multiplication by \emph{any} reflection of length at most $2k-1$.\par 
    List the elements of $G$ as such:
    \[(v_0,v_1,\ldots,v_{2k+1}) \coloneqq (w,ws_1,\ldots,w(s_1s_2)^{k}s_1).\]
    Then $(v_0,v_j)$ is an edge in $\cg$ for every odd $j \in \{0,\ldots,2k+1\}$. Similarly, for any even $i$ and odd $j$ in $[2k+1]$, the pair $(v_i,v_j)$ is an edge in $\cg$ via: \begin{itemize}
        \item right multiplication by some $(s_1s_2)^ls_1$ for $l \leq k$ if $i<j$, or
        \item right multiplication by some $(s_2s_1)^ls_2$ for $l < k$ if $j<i$.
    \end{itemize}
    In particular, every even-indexed $v_i$ in $G$ is connected to every odd-indexed $v_j$ in $G$, and vice versa. Since $\{0,\ldots,2k+1\}$ has the same number of odd and even integers (and two elements in $G$ of the same parity index cannot be connected), we get that $G$ is regular.\par 
    The proof for the $G = \{w,\ldots,w(s_2s_1)^ks_2\}$ is the same with slightly different indices, and in any case follows by symmetry.
\end{proof}
So the number of edges \emph{leaving} $G$ in $\cg_H$ is also the same for each vertex in $G$. In particular, we get the following corollary.

\begin{corollary}\label{cor:rationally_smooth}
    Fix $w \in I_2(n)$. If $\alpha_{2k+1} \in H$ then any nonzero homogeneous spline with support contained in \[G = \{w,ws_1,ws_1s_2,\ldots,w(s_1s_2)^{k}s_1\},\]
    respectively $\alpha_{2k+2} \in H$ and with support contained in
    \[G = \{w,ws_2,\ldots,w(s_2s_1)^{k}s_2\},\]
    of degree $N = \abs{\{\alpha \in H \mid ws_\alpha \notin G\}}$  must be of the form 
    \[\spl{\sigma}(v) = c^w(v)\prod_{\substack{\alpha \in H \\ vs_\alpha \notin G}}t_{v(\alpha)}\]
    where $c^w(v) \in \C^*$ when $v \in G$ and $\spl{\sigma}(v) = 0$ otherwise. Moreover, if such a spline exists, there is a unique one
normalized by $c^w(w)=1$.
\end{corollary}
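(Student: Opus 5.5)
The argument follows the proof of Theorem~\ref{thm:schubert_rational}, with Lemma~\ref{lem:regular} playing the role of the parity count used there. Let $\spl{\sigma}$ be a nonzero homogeneous spline supported in $G$ of degree $N$. By Lemma~\ref{lem:regular}, the subgraph of $\cg_H$ induced on $G$ is regular. Every vertex of $\cg_H$ has degree $\abs{H}$, so each $v\in G$ has the same number of edges leaving $G$. Evaluating at $v=w$ shows this number is $N=\abs{\{\alpha\in H\mid ws_\alpha\notin G\}}$.

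Fix $v\in G$. For each $\alpha\in H$ with $vs_\alpha\notin G$, the value $\spl{\sigma}(vs_\alpha)$ is $0$, so the spline condition gives $t_{v(\alpha)}\mid\spl{\sigma}(v)$. Distinct $\alpha\in H$ give distinct reflections $vs_\alpha v^{-1}$, hence pairwise non-proportional linear forms $t_{v(\alpha)}$. Therefore their product, which has degree $N$, divides $\spl{\sigma}(v)$. Since $\spl{\sigma}$ is homogeneous of degree $N$, we get $\spl{\sigma}(v)=c^w(v)\prod t_{v(\alpha)}$ for some $c^w(v)\in\C$. Outside $G$ the spline vanishes by hypothesis.

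The main obstacle is showing $c^w(v)\neq 0$ for every $v\in G$. I would argue by connectivity, exactly as in Theorem~\ref{thm:schubert_rational}. Since $\spl{\sigma}\ne 0$, some $c^w(v)\neq 0$. Suppose $v,vs_\beta\in G$ are adjacent with $\beta\in H$. The label $t_{v(\beta)}$ is not proportional to any factor $t_{v(\alpha)}$ with $vs_\alpha\notin G$, because $\alpha\neq\beta$. So if $c^w(v)\neq0$ and $c^w(vs_\beta)=0$, the condition $t_{v(\beta)}\mid\spl{\sigma}(v)-0$ would contradict unique factorization. Hence nonvanishing propagates along edges. The induced subgraph on $G$ is connected, since it is complete bipartite between even- and odd-indexed vertices by the proof of Lemma~\ref{lem:regular}. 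It follows that $c^w(v)\neq0$ for all $v\in G$.

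For uniqueness, take two such splines normalized by $c^w(w)=1$. Their difference is homogeneous of degree $N$, is supported in $G$, and vanishes at $w$. If it were nonzero, the previous paragraph would force it to be nonvanishing at $w$, a contradiction. So the difference is zero, and normalizing by the nonzero scalar $c^w(w)$ gives existence of the normalized spline whenever any such spline exists.
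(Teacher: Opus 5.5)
Your proof is correct, and for the product form and the nonvanishing of $c^w(v)$ it is the same as the paper's argument. Both use divisibility by the labels of edges leaving $G$, the degree count from Lemma~\ref{lem:regular}, and propagation of nonvanishing along edges of the connected induced subgraph on $G$.

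The only difference is the uniqueness step. The paper works edge by edge in $\C[x,y]/\langle t_\beta\rangle\cong\C[t]$ to show that $c^w(v_2)$ is determined by $c^w(v_1)$. You instead apply the nonvanishing result you have already proved to the difference of two normalized splines. That difference vanishes at $w$, so it must be zero. Your version is shorter and avoids the auxiliary identification with $\C[t]$.
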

\begin{proof}
   The product $\ds \prod_{\substack{\alpha \in H \\ vs_\alpha \notin G}}t_{v(\alpha)}$ divides $\spl{\sigma}(v)$, as each $v(\alpha)$ is a pairwise non-proportional linear form and $\spl{\sigma}(vs_\alpha) = 0$ when $vs_\alpha \notin G$ by assumption. Lemma \ref{lem:regular} ensures that this product is degree $N$. As such, we get the above form for $c^w(v) \in \C$ and it remains to prove that $c^w(v) \neq 0$ for all $v \in G$.

   Since $\spl{\sigma} \neq 0$, certainly $c^w(v) \neq 0$ for \emph{some} $v \in G$. If $v_1,v_2 \in G$ are connected by an edge in $\cg_H$ labeled by $\beta$ and $c^{w}(v_1) \neq 0$ , then $t_\beta$ \emph{cannot} appear in the products for $\spl{\sigma}(v_1)$ or $\spl{\sigma}(v_2)$. Since these roots are pairwise non-proportional, $t_\beta$ does not divide $\spl{\sigma}(v_1)$.
    If $c^w(v_2)=0$, then $\spl{\sigma}(v_2)=0$, and the spline condition would force $t_\beta\mid\spl{\sigma}(v_1)$, a contradiction. Thus $c^w(v_2)\neq 0$. Since $G$ is connected, $c^w(v) \neq 0$ for all $v \in G$.

   The moreover part of the proof will proceed in much the same way. Say that $c^w(v_1)$ is uniquely determined from $c^w(w) = 1$ and $v_2$ is adjacent to $v_1$ with edge label $\beta$. Consider $\ds \left[\spl{\sigma}(v_1)\right], \left[\spl{\sigma}(v_2)\right] \in \faktor{\C[x,y]}{\llangle t_\beta \rrangle}$. This quotient is (non-canonically) isomorphic to the polynomial ring $\C[t]$. We have established that neither class is zero, and so choosing any particular graded isomorphism to $\C[t]$ gives $[\spl{\sigma}(v_1)] \mapsto c^w(v_1) c_1t^N$ and $[\spl{\sigma}(v_2)] \mapsto c^w(v_2) c_2t^N$ for some $c_1,c_2 \in \C^*$. Since $[\spl{\sigma}(v_1) - \spl{\sigma}(v_2)] = 0$, we get that $c^w(v_2) = \frac{c_1}{c_2}c^w(v_1)$. So we have determined the value of $c^w(v_2)$, and since $G$ is connected we have the claim.
\end{proof}
Now we turn these results towards the flow-up basis $\{\spl{\rho}_w \mid w \in I_2(n)\}$ that we have constructed with Schubert splines, the dot action, and the divided difference operator. We have now as well the tool to define their supports.

\begin{proposition}\label{prop:flowup_explicit_setup}
    Let $w \in I_2(n)$. Then either $\spl{\rho}_w$ is a Schubert spline, $w = w_0$ and $\spl{\rho}_{w_0}$ is given, or $\spl{\rho}_w$ satisfies the conditions in Corollary \ref{cor:rationally_smooth}.
\end{proposition}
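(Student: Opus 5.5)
The plan is to go through the cases of Theorem~\ref{thm:flowup} one at a time. For each $\spl{\rho}_w$ that is neither $\spl{\one}=\spl{\sigma}_e$, a Schubert spline, nor $\spl{\rho}_{w_0}$, I will exhibit a set $G$ of the form in Corollary~\ref{cor:rationally_smooth} that contains $\supp(\spl{\rho}_w)$, and check that $\deg\spl{\rho}_w=\hlen(w)$ equals $N=\abs{\{\alpha\in H\mid ws_\alpha\notin G\}}$. Two cases remain: the divided-difference case ($2\ell(w)+1>n$ with $(w,s_\alpha w)\in\ce_H$) and the dot-action case ($(w,s_\alpha w)\notin\ce_H$). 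I will handle them by downward induction on $\ell(w)$.

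\emph{Divided-difference case.} Here $\spl{\rho}_w$ is a flow-up spline, so its support lies in the Bruhat upper ideal $\{v\geq w\}$. Put $m=n-\ell(w)$. By Lemma~\ref{lem:rank_two_bruhat_facts}(2) this ideal consists of $w$, two elements of each length $\ell(w)+1,\dots,n-1$, and $w_0$, so it has $2m$ elements. Let $s_a$ be the right ascent of $w$. Multiplying on the right by alternating simple reflections climbs from $w$ to $w_0$ in $m$ steps and then descends the other side. Hence the ideal is exactly
\[G=\{w,ws_a,ws_as_b,\dots,w(s_as_b)^{m-1}s_a\}.\]
The reflection $(s_as_b)^{m-1}s_a$ has length $2m-1=2(n-\ell(w))-1$. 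By Lemma~\ref{lem:leftright_length}, this is precisely $\ell(s_\gamma)$, where $s_\alpha w=ws_\gamma$. Since $(w,s_\alpha w)\in\ce_H$, we have $\gamma\in H$, and $\gamma$ is the root $\alpha_{2k+1}$ or $\alpha_{2k+2}$ (with $k=m-1$) that matches the starting letter $s_a$. So the hypothesis of Corollary~\ref{cor:rationally_smooth} holds. For the degree: every element of $G$ other than $w$ is longer than $w$, so every $H$-edge at $w$ that stays in $G$ is an ascent. Conversely, every $H$-edge from $w$ to a longer element stays in $G$. Hence $N$ is the number of $H$-descents of $w$, which is $\hlen(w)$.

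\emph{Dot-action case.} Here $\spl{\rho}_w=s_\alpha\cdot\spl{\rho}_{s_\alpha w}$. Left multiplication by $s_\alpha$ is an automorphism of the Cayley graph $\cg_H$, because edges are right multiplications, and it carries edge labels to their $s_\alpha$-images. So if $\spl{\rho}_{s_\alpha w}$ is supported in $G'=\{s_\alpha w,\,s_\alpha ws_a,\dots\}$ of the required form, then $s_\alpha\cdot\spl{\rho}_{s_\alpha w}$ is supported in $s_\alpha G'=\{w,ws_a,\dots\}$. This set uses the same right word and the same root condition, and has the same count $N$. Its degree $\hlen(s_\alpha w)=\hlen(w)$ agrees by Lemma~\ref{lem:simple_degree_changes}(1). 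By induction, the only thing left is to rule out that $\spl{\rho}_{s_\alpha w}$ is $\spl{\rho}_{w_0}$ or a Schubert spline.

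This exclusion is the main bookkeeping obstacle.
\begin{itemize}
\item If $s_\alpha w=w_0$, then by Lemma~\ref{lem:leftright_length} the edge $(w,w_0)$ corresponds to a reflection of length $2(n-(n-1))-1=1$. That root is simple, hence lies in $H$, so this case never falls under the dot-action branch.
\item If $\spl{\rho}_{s_\alpha w}$ is Schubert, then $s_\alpha w$ has its ascent edge in $\ce_H$ and $2\ell(s_\alpha w)+1\leq n$. The root of that edge has length $2\ell(w)+3$. The non-edge $(w,s_\alpha w)$ has root length $2\ell(w)+1\leq n$, which is smaller. Since $H$ is a lower order ideal, the smaller root must also lie in $H$, contradicting $(w,s_\alpha w)\notin\ce_H$.
\end{itemize}
I expect these length computations, together with matching the parity and index of the root ($\alpha_{2k+1}$ versus $\alpha_{2k+2}$) to the starting letter of $G$, to need the most care.
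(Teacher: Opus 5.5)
Your proposal is correct and follows the paper's argument: the dot action preserves the hypotheses of Corollary~\ref{cor:rationally_smooth}, and in the divided-difference branch the Bruhat upper ideal of $w$ is a set $G$ of the required form with $N=\hlen(w)$; you also check explicitly that the dot-action branch never starts from $\spl{\rho}_{w_0}$ or from a Schubert spline, which supplies a step the paper leaves implicit. One small tightening: matching lengths via Lemma~\ref{lem:leftright_length} does not by itself tell you which of the two reflections of length $2m-1$ is $s_\gamma$, but since $s_\alpha w\in G$, the element $w^{-1}s_\alpha w$ is one of the alternating prefixes of $(s_as_b)^{m-1}s_a$, and the only prefix of length $2m-1$ is the full word.
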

\begin{proof}
    If $\spl{\rho} \in \Ms$, then $\supp(v \cdot \spl{\rho}) = \{vu \mid u \in \supp(\spl{\rho})\}$. Furthermore, since the dot action preserves degree, we have that if $\spl{\rho}$ satisfies the conditions in Corollary \ref{cor:rationally_smooth} then so does $v \cdot \spl{\rho}$ for all $v \in I_2(n)$.

    As such, it remains to check those flow-up splines $\spl{\rho}_w$ that are achieved by repeatedly applying the divided difference operator to $\spl{\rho}_{w_0}$. We take $\spl{\rho}_{w_0}$ as the base case, with its support and values given directly by \eqref{eq:top_spline}, and proceed by induction.
    
    Take such a $\spl{\rho}_w$ where $\alpha$ is a left ascent of $w$, $(w,s_\alpha w) \in \ce_H$, and $2\ell(w)+1>n$. By Theorem \ref{thm:flowup}, we have $\partial_\alpha \spl{\rho}_{s_\alpha w} = \spl{\rho}_{w}$. Now $\spl{\rho}_{w}$ is supported on the upper order ideal generated by $w$, so we let $G = \{v \in I_2(n) \mid w \leq v\}$. By Corollary \ref{cor:existence_upper_elts}, the degree of $\spl{\rho}_{w}$ is $\hlen(w) = \abs{\{\gamma \in H \mid w s_\gamma \notin G\}}$. Furthermore, if $s_1$ is the unique right ascent of $w$ then \[G = \left\{w, ws_1, ws_1s_2, \ldots,w_0,\ldots, w (s_1s_2)^ks_1 = s_\alpha w \right\}, \]
    where $\alpha_{2k+1} \in H$. If $s_2$ is the unique right ascent, then the same occurs with $\alpha_{2k+2}$, and thus $\spl{\rho}_{w}$ satisfies the conditions in Corollary \ref{cor:rationally_smooth}.
\end{proof}

When combined with Theorem \ref{thm:schubert_rational}, we have an explicit description of each flow-up spline $\spl{\rho}_w$. The support $G_w$ is either the upper order ideal generated by $w$, or the set given in Corollary \ref{cor:rationally_smooth}.

\begin{proposition}\label{prop:linear_independence}
    Fix an integer $k\geq 0$ with $2k+1<n$, and define
    \[
        H \coloneqq
        \{\gamma\in\Phi^+ \mid \ell(s_\gamma)\leq 2k+1\}
        =\{\alpha_1,\ldots,\alpha_{2k+2}\}.
    \]
    For each distinct set of the form
    \[
        G=\{w,ws_1,\ldots,w(s_1s_2)^ks_1\}
        \qquad\text{or}\qquad
        G=\{w,ws_2,\ldots,w(s_2s_1)^ks_2\},
    \]
    choose constants $c^G(v)\in\C^\times$ for $v\in G$,
    and define $\spl{\sigma}_G\colon I_2(n)\to\C[x,y]$ by
    \[
        \spl{\sigma}_G(v)=
        \begin{cases}
            \displaystyle c^G(v)
            \prod_{\substack{\alpha\in H\\
                            vs_\alpha\notin G}}
            t_{v(\alpha)}, & v\in G,\\[6pt]
            0, & v\notin G.
        \end{cases}
    \]
    Then the set of functions $\{\spl{\sigma}_G\}$ for all
    such $G$ is linearly independent over $\C$.
\end{proposition}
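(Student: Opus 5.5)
The plan is to work entirely locally, evaluating the relation at a single vertex and reducing modulo one well-chosen linear form. Throughout, we use the cyclic structure of $I_2(n)$ under right multiplication by $s_1$ and $s_2$.

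\emph{Setup.} The Cayley graph for $\{s_1,s_2\}$ is a $2n$-cycle, $u \mapsto us_1 \mapsto us_1s_2 \mapsto\cdots$. Each set $G$ in the statement is an arc of $2k+2$ consecutive vertices of this cycle. Since $2k+2\le n<2n$, every such arc has two well-defined endpoints. Distinct sets $G$ are distinct arcs.

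Fix $v\in I_2(n)$. Because $H=\{\gamma \mid \ell(s_\gamma)\le 2k+1\}$ and $2k+1<n$, the $H$-neighbors $vs_\alpha$ of $v$ are exactly the vertices at odd cyclic distance $d\in\{1,3,\ldots,2k+1\}$ on either side of $v$. There are $k+1$ on each side, all distinct, and their labels $t_{v(\alpha)}$ are pairwise non-proportional. Call the corresponding forms $\lambda_d$ on one side and $\mu_d$ on the other, orienting the cycle once and for all.

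If $v$ sits at position $i\in\{0,\ldots,2k+1\}$ of an arc $G$, counted from the $\lambda$-side, then $G$ contains the $i$ vertices before $v$ and the $2k+1-i$ vertices after it. Hence
\[
    \spl{\sigma}_G(v)=c^G(v)\prod_{\substack{d \text{ odd}\\ d>i}}\lambda_d\prod_{\substack{d \text{ odd}\\ d>2k+1-i}}\mu_d .
\]
Distinct arcs containing $v$ have distinct positions $i$. This also recovers the degree $k+1$ from Lemma~\ref{lem:regular}.

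\emph{Key step.} Suppose $\sum_G a_G\spl{\sigma}_G=0$ with $a_G\in\C$, and evaluate at $v$. Only the arcs containing $v$ contribute, and we get $\sum_{i=0}^{2k+1} a_{G_i}\,\spl{\sigma}_{G_i}(v)=0$, where $G_i$ is the arc with $v$ at position $i$. For every $i\le 2k$ the factor $\lambda_{2k+1}$ occurs, since $2k+1>i$. For $i=2k+1$ the $\lambda$-product is empty, so $\spl{\sigma}_{G_{2k+1}}(v)=c^{G_{2k+1}}(v)\prod_{d}\mu_d$.

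Reduce the relation modulo $\lambda_{2k+1}$. Every term with $i\le 2k$ vanishes. The remaining term is a nonzero constant times a product of linear forms not proportional to $\lambda_{2k+1}$. Since $\poly/\langle\lambda_{2k+1}\rangle$ is a domain, that term is nonzero modulo $\lambda_{2k+1}$. Therefore $a_{G_{2k+1}}=0$: the coefficient of the arc whose last vertex (on the $\mu$-side) is $v$ vanishes.

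\emph{Conclusion.} Every arc $G$ has a unique endpoint $v$ at which it sits in position $2k+1$. Applying the key step at that $v$ kills $a_G$, so all coefficients vanish and the $\spl{\sigma}_G$ are linearly independent over $\C$.

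The main obstacle is the bookkeeping in the setup. One must verify that the $H$-neighbors of $v$ are exactly the odd-distance vertices up to distance $2k+1$ on each side, which uses $2k+1<n$ so that the two sides do not overlap or wrap around. One must also verify that the "excluded neighbor" sets are as described, in particular that $\lambda_{2k+1}$ is excluded for every position except the extreme one. Both follow from the description of right multiplication by $(s_1s_2)^ls_1$ and $(s_2s_1)^ls_2$ used in the proof of Lemma~\ref{lem:regular}. The algebra is then only that $\C[x,y]/\langle\lambda\rangle$ is a domain and that the labels are pairwise non-proportional.
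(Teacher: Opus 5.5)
Your proof is correct. It reorganizes the same local mechanism the paper uses: evaluate the relation at a vertex, then kill all but one term using a single edge label, via the product formula and pairwise non-proportionality of the labels.

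The paper indexes the cycle as $v_0=e$, $v_1=s_1,\ldots$ and looks only at the $2k+3$ arcs through $e$ or $v_1$. It then runs a triangular elimination:
\begin{itemize}
\item it takes the earliest index $m$, in the order $2n-2k-1,\ldots,2n-1,0,1$, with $c_m\neq 0$;
\item it evaluates at $e$ (or $v_1$), and then at a point of $\ker t_\gamma$, where $t_\gamma$ labels the edge to the first vertex $v_m$ of $G_m$;
\item later arcs miss $v_m$, and earlier coefficients are already zero by minimality.
\end{itemize}
It then transports the conclusion to every other pair $v_{2j},v_{2j+1}$ by left multiplication by $(s_1s_2)^{-j}$.

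You instead pair each arc with its terminal vertex $v$ and the label $\lambda_{2k+1}$ of the edge from $v$ back to the arc's initial vertex. Every other arc through $v$ begins strictly closer to $v$ and so misses that vertex. Hence one reduction modulo $\lambda_{2k+1}$ isolates the given arc. The paper's first case $m=2n-2k-1$ is exactly your key step at $v=e$.

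Giving each arc this private witness removes the induction, the odd/even case split on $m$, and the symmetry argument. The cost is writing down the position formula for $\spl{\sigma}_G(v)$ at an arbitrary vertex, which you correctly derive from $2k+1<n$. The paper's route, conversely, only inspects two base vertices and lets the rotation subgroup do the rest.
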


\begin{proof}
    Since there are duplicates in the descriptions of $G$,
    it will be convenient to choose an indexing for $I_2(n)$
    within this proof that removes this ambiguity. Let
    $v_0=e$, $v_1=s_1$, and more generally
    \begin{align*}
        v_{2j} &= (s_1s_2)^j,\\
        v_{2j+1} &= (s_1s_2)^js_1,
    \end{align*}
    up to $v_{2n-1}=s_2$. Read all vertex indices modulo $2n$.
    Then let
    \[
        G_i=\{v_i,v_{i+1},\ldots,v_{i+2k+1}\},
    \]
    and denote $\spl{\sigma}_{G_i}$ by $\spl{\sigma}_i$,
    for all $i\in\{0,\ldots,2n-1\}$.

    Consider a linear relation
    $\ds\sum_{i=0}^{2n-1}c_i\spl{\sigma}_i=0$,
    where $c_i\in\C$. Since this is a linear relation of
    functions, we may evaluate it at any $w\in I_2(n)$
    and it must evaluate to $0$. In particular,
    \[
        \sum_{i=0}^{2n-1}c_i\spl{\sigma}_i(e)=0
        \qquad\text{and}\qquad
        \sum_{i=0}^{2n-1}c_i\spl{\sigma}_i(v_1)=0.
    \]
    Many of these evaluations are $0$ because $e$ or $v_1$
    are not in the support, so we consider the subset
    \[
        \left\{
            \spl{\sigma}_{2n-2k-1},
            \spl{\sigma}_{2n-2k},\ldots,
            \spl{\sigma}_{2n-1},
            \spl{\sigma}_0,\spl{\sigma}_1
        \right\}
    \]
    whose supports contain at least one of $e$ and $v_1$.
    Consider the sequence
    $S=(2n-2k-1,\ldots,2n-1,0,1)$, so that
    \[
        \sum_{i=0}^{2n-1}c_i\spl{\sigma}_i(e)
        =\sum_{i\in S}c_i\spl{\sigma}_i(e)
        \qquad\text{and}\qquad
        \sum_{i=0}^{2n-1}c_i\spl{\sigma}_i(v_1)
        =\sum_{i\in S}c_i\spl{\sigma}_i(v_1).
    \]
    We will prove that $c_i=0$ for all $i\in S$.

    For contradiction, let $m$ be the earliest-appearing
    index in $S$ where $c_m\neq 0$. If $m=0$, then
    \[
        \sum_{i\in S}c_i\spl{\sigma}_i(e)
        =c_0\spl{\sigma}_0(e)+c_1\spl{\sigma}_1(e)
        =c_0\spl{\sigma}_0(e)
        =c_0c^{G_0}(e)
          \prod_{\substack{\alpha\in H\\s_\alpha\notin G_0}}
          t_\alpha
        \neq 0,
    \]
    a contradiction. Similarly, if $m=1$, then
    \[
        \sum_{i\in S}c_i\spl{\sigma}_i(v_1)
        =c_1\spl{\sigma}_1(v_1)\neq 0.
    \]
    So $m\in\{2n-2k-1,\ldots,2n-1\}$.

    If $m=2n-2q-1$ for $q\in\{0,\ldots,k\}$, then
    $v_m$ and $v_0$ differ by a reflection of length at most
    $2k+1$. By the definition of $H$, this reflection
    corresponds to a root $\gamma\in H$.
    Now $v_m\notin G_i$ for all
    \[
        i\in J_m\coloneqq\{m+1,\ldots,2n-1,0,1\},
    \]
    so $t_\gamma$ divides $\spl{\sigma}_i(e)$ for every
    such $i$, whereas it does not divide
    $\spl{\sigma}_m(e)$.

    Now we consider evaluation of the polynomials themselves.
    Since distinct positive roots are non-proportional,
    any nonzero point $p_\gamma\in\ker t_\gamma$ satisfies
    $t_\alpha(p_\gamma)\neq 0$ for every
    $\alpha\in\Phi^+\setminus\{\gamma\}$. As such,
    \begin{align*}
        \left[\sum_{i\in S}c_i\spl{\sigma}_i(e)\right]
            (p_\gamma)
        &=
        \left[
            \sum_{i\in\{m\}\cup J_m}c_i\spl{\sigma}_i(e)
        \right](p_\gamma)\\
        &=
        \sum_{i\in\{m\}\cup J_m}
            c_i[\spl{\sigma}_i(e)](p_\gamma)\\
        &=
        c_m[\spl{\sigma}_m(e)](p_\gamma).
    \end{align*}
    Since $[\spl{\sigma}_m(e)](p_\gamma)\neq 0$, the
    relation forces $c_m=0$, giving a contradiction.

    If, on the other hand, $m=2n-2q$ for
    $q\in\{1,\ldots,k\}$, then $v_m$ and $v_1$ differ
    by right multiplication by a reflection of length at
    most $2k+1$, so they are joined by an edge of $\cg_H$.
    The remaining contradiction proceeds exactly as it did
    for $m=2n-2q-1$, this time evaluating the functions at
    $v_1$ and then evaluating the resulting polynomials at
    a nonzero point in the kernel of the label of this edge.
    The incident labels at $v_1$ are pairwise
    non-proportional, so the same argument applies.

    Thus no such $m$ exists, and $c_i=0$ for all $i\in S$.
    For each $j\in\{0,\ldots,n-1\}$, left multiplication
    by $(s_1s_2)^{-j}$ sends $v_{2j}$ to $e$, $v_{2j+1}$
    to $v_1$, and $G_i$ to $G_{i-2j}$, with indices
    read modulo $2n$. It preserves $\cg_H$ and transforms
    its labels by an invertible linear map. Applying the
    same argument at $v_{2j}$ and $v_{2j+1}$ therefore
    gives $c_{i+2j}=0$ for every $i\in S$. Since $0,1\in S$,
    this gives $c_{2j}=c_{2j+1}=0$ for every $j$.
    Hence all coefficients vanish, proving linear
    independence.
\end{proof}
\begin{corollary}\label{cor:admissible_block_independence}
    Let $H\subsetneq\Phi^+$ be a Hessenberg space with
    $|H|=2k+1$.
    Consider the sets
    \[
        G=\{w,ws_1,\ldots,w(s_1s_2)^ks_1\}
        \qquad\text{when }\alpha_{2k+1}\in H,
    \]
    and
    \[
        G=\{w,ws_2,\ldots,w(s_2s_1)^ks_2\}
        \qquad\text{when }\alpha_{2k+2}\in H.
    \]
    For each distinct such $G$, choose constants
    $c^G(v)\in\C^\times$ for $v\in G$, and define
    $\spl{\sigma}_G\colon I_2(n)\to\C[x,y]$ by
    \[
        \spl{\sigma}_G(v)=
        \begin{cases}
            \displaystyle c^G(v)
            \prod_{\substack{\gamma\in H\\
                            vs_\gamma\notin G}}
            t_{v(\gamma)}, & v\in G,\\[6pt]
            0, & v\notin G.
        \end{cases}
    \]
    Then these functions are linearly independent over $\C$.
\end{corollary}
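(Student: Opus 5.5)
The plan is to rerun the triangular evaluation argument from the proof of Proposition~\ref{prop:linear_independence}, restricted to the smaller family of sets that actually occurs here. First we pin down $H$. Since $\abs{H}=2k+1$ is odd, either $H=\{\alpha_1,\ldots,\alpha_{2k+1}\}$, or $H=\{\alpha_1,\ldots,\alpha_{2k},\alpha_{2k+2}\}$. In the first case only sets of the first shape occur, because $\alpha_{2k+2}\notin H$. In the second case only sets of the second shape occur. The diagram automorphism exchanging $s_1\leftrightarrow s_2$ (equivalently $\alpha_{2j+1}\leftrightarrow\alpha_{2j+2}$) carries one case to the other. It preserves Bruhat order and lengths, and it transforms edge labels by an invertible linear change of variables. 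So it suffices to treat $H=\{\alpha_1,\ldots,\alpha_{2k+1}\}$, with sets $G=\{w,ws_1,\ldots,w(s_1s_2)^ks_1\}$.

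We use the cyclic indexing $v_0=e$, $v_{2j}=(s_1s_2)^j$, $v_{2j+1}=(s_1s_2)^js_1$ from the proof of Proposition~\ref{prop:linear_independence}, with $G_i=\{v_i,\ldots,v_{i+2k+1}\}$ and indices read modulo $2n$. Consecutive vertices $v_i,v_{i+1}$ differ by right multiplication by a simple reflection. The sets of the first shape are exactly the intervals $G_i$ whose first step $v_i\to v_{i+1}$ is right multiplication by $s_1$, together with their reversals; a reversed interval equals some $G_{i'}$ as a set. So the family is $\{\spl{\sigma}_{G_i}: i\in I\}$ for an explicit set $I$ of residues. Given a relation $\sum_{i\in I}c_i\spl{\sigma}_{G_i}=0$, we evaluate at $e$ and at $v_1$. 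Only indices $i\in S\cap I$ contribute, where $S=(2n-2k-1,\ldots,2n-1,0,1)$. We take the earliest $m\in S\cap I$ with $c_m\neq 0$. If $m\in\{0,1\}$, evaluation at $v_m$ isolates $c_m$ immediately, as before. Otherwise we find $\gamma\in H$ such that $v_m$ is joined to $e$ (or to $v_1$) by the edge with label $t_\gamma$. Then $t_\gamma$ divides every later term evaluated at $e$ (resp.\ $v_1$), because $v_m$ lies outside those later supports, but it does not divide the $m$-th term. Evaluating at a nonzero point of $\ker t_\gamma$ then forces $c_m=0$, since distinct positive roots are pairwise non-proportional. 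Finally, translation by $(s_1s_2)^{-j}$ preserves $\cg_H$ and maps intervals to intervals, so it transports the conclusion to all indices exactly as at the end of the proof of Proposition~\ref{prop:linear_independence}.

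The main obstacle is the edge-existence step. In Proposition~\ref{prop:linear_independence}, $H$ contained every reflection of length at most $2k+1$. Here the reflection $(s_2s_1)^ks_2$, of length $2k+1$, is missing from $H$. The vertex $v_{2n-2k-1}=(s_2s_1)^ks_2$ is joined to $e$ exactly by that reflection, so the corresponding edge is absent. I expect to check by a parity computation that the problematic indices, $m=2n-2k-1$ and whichever $m$ would need an edge to $v_1$ via a reflection outside $H$, are not in $I$: the interval starting there begins with an $s_2$-step. If some such index does lie in $I$, I will evaluate instead at the other endpoint, $e$ versus $v_1$. I will use the fact that $v_m$ and $v_1$ (resp.\ $e$) differ by a reflection of the form $(s_1s_2)^ls_1$ with $l\le k$, and every such reflection lies in $H$. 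If necessary, I will reorder $S$ so that the earliest nonzero index is always adjacent, via an $H$-edge, to one of the two evaluation points. Once this bookkeeping is confirmed, the rest of the argument is identical to Proposition~\ref{prop:linear_independence}.
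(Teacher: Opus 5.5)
Your argument is correct, but it takes a different route from the paper.

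\textbf{The paper's route.} The paper does not rerun the evaluation at all. It sets $H^*=H\cup\{\delta\}=\{\alpha_1,\ldots,\alpha_{2k+2}\}$ and notes that the subgraph of $\cg_H$ induced on each $G$ is already complete bipartite by length parity. Hence $vs_\delta\notin G$ for every $v\in G$, so $\spl{\tau}_G(v)\coloneqq t_{v(\delta)}\spl{\sigma}_G(v)$ is exactly a function of the type in Proposition~\ref{prop:linear_independence} for $H^*$. A relation $\sum_G a_G\spl{\sigma}_G=0$ multiplies pointwise to the same relation among the $\spl{\tau}_G$, and the proposition is then used as a black box.

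\textbf{Your route.} You redo the triangular elimination with the smaller edge set, and the bookkeeping you deferred closes exactly as you predicted.
\begin{itemize}
\item For $H=\{\alpha_1,\ldots,\alpha_{2k+1}\}$ the family is precisely $\{G_i\mid i\text{ even}\}$, so $S\cap I=\{2n-2k,2n-2k+2,\ldots,2n-2,0\}$. Apart from $m=0$, which is handled at $e$, every step uses an edge $v_1\leftrightarrow v_1s_{\alpha_{2q+1}}=v_{2n-2q}$ with $1\le q\le k$, and $\alpha_{2q+1}\in H$.
\item For $H=\{\alpha_1,\ldots,\alpha_{2k},\alpha_{2k+2}\}$ the family is $\{G_i\mid i\text{ odd}\}$. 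Apart from $m=1$, every step uses an edge $e\leftrightarrow s_{\alpha_{2q+2}}=v_{2n-2q-1}$ with $0\le q\le k$, again inside $H$. So you do not even need the diagram automorphism.
\end{itemize}
In each case the missing edge, $e\leftrightarrow v_{2n-2k-1}$ or respectively $v_1\leftrightarrow v_{2n-2k}$, is needed only at an index of the excluded shape. Neither your fallback nor a reordering of $S$ is required. You should also state that left translation by $(s_1s_2)^{-j}$ preserves index parity, so it preserves the family; ``maps intervals to intervals'' alone does not say this.

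\textbf{Comparison.} The paper's multiplication trick is shorter and avoids all of this case analysis. Your version is self-contained, and it shows concretely that restricting to a single shape of $G$ is exactly what lets the smaller $H$ suffice.
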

\begin{proof}
    Write
    \[
        H^*=\{\alpha_1,\ldots,\alpha_{2k+2}\}
            =H\cup\{\delta\}.
    \]
    For each set $G$ under consideration, the proof of
    Lemma \ref{lem:regular} shows that the subgraph induced
    by $G$ in $\cg_H$ is complete bipartite, with parts
    given by length parity. Consequently,
    $vs_\delta\notin G$ for every $v\in G$: otherwise
    $v$ and $vs_\delta$ would be opposite-parity vertices
    in $G$ whose connecting edge does not belong to
    $\cg_H$.

    Define
    \[
        \spl{\tau}_G(v)
        \coloneqq t_{v(\delta)}\spl{\sigma}_G(v).
    \]
    Since the $\delta$-edge leaves $G$ at every $v\in G$,
    these functions satisfy
    \[
        \spl{\tau}_G(v)=
        \begin{cases}
            \displaystyle c^G(v)
            \prod_{\substack{\gamma\in H^*\\
                            vs_\gamma\notin G}}
            t_{v(\gamma)}, & v\in G,\\[6pt]
            0, & v\notin G.
        \end{cases}
    \]
    Because $2k+1=|H|<n$, these functions form a subfamily
    of the linearly independent family in
    Proposition \ref{prop:linear_independence}, applied
    to $H^*$.

    Any relation $\sum_G a_G\spl{\sigma}_G=0$,
    with $a_G\in\C$, gives
    $\sum_G a_G\spl{\tau}_G=0$ after pointwise
    multiplication by $v\mapsto t_{v(\delta)}$.
    Hence every $a_G$ is zero.
\end{proof}


\section{The Left Representation}\label{sec:left_rep}
In this section, we will compute the dot action representation $\lqot{H}$.

For non-negative integer $h$ at most $(n-1)/2$ for $n$ odd and at most $n/2-1$ for $n$ even, set the following notation: 
\begin{equation}\label{eq:Ph_representation}
    \mathsf{P}_h \coloneqq \one \oplus \bigoplus_{j=1}^h V_j.
\end{equation}
\begin{remark}
    We record three facts about $\mathsf{P}_h$. First, the dimension is $2h+1$. Second, the virtual representation $C_\gamma - \mathsf{P}_h$ is in fact a representation for all $h$. Finally, when $n$ is odd then $C_\gamma \cong \mathsf{P}_{(n-1)/2}$, but when $n$ is even  $C_\gamma \cong \eta_\gamma \oplus \mathsf{P}_{n/2-1}$.
\end{remark}
\begin{definition}\label{def:H_dominant}
    Let $H$ be a Hessenberg space with simple roots $\alpha,\beta$, and let $h = \lfloor \abs{H} / 2 \rfloor$. The simple root $\alpha$ is \emph{$H$-dominant} if there is a $w = s_\beta s_\alpha\cdots \in I_2(n)$ with $\ell_H(w) = h$ such that $(w,s_\alpha w) \in \ce_H$  but $(w,s_\beta w) \notin \ce_H$.
\end{definition}
\begin{remark}
    Assume $H\subsetneq\Phi^+$. When $\abs{H}$ is even, both simple roots satisfy the definition for $H$-dominant. When $\abs{H}$ is odd, one of the simple roots will be $H$-dominant, and for the other no such $w$ exists. The main consequence is that, in view of Theorem \ref{thm:left_representation} below, when $\abs{H}$ is odd and $n$ is even, the representation $\lqot{H}$ does in fact depend on which of the two Hessenberg spaces of size $2h+1$ are chosen to be $H$.
\end{remark}

\begin{theorem}\label{thm:left_representation}
    Let $H \subsetneq \Phi^+$ be a Hessenberg space, let $\{\alpha,\beta\}$ be the simple roots, and set $h=\lfloor |H|/2\rfloor$. If $\abs{H}$ is odd, let $\alpha$ be the $H$-dominant simple root. Then, as an element of the graded representation ring of $I_2(n)$,
    \begin{equation}\label{eq:left_hessenberg_representation}
    \lqot{H}  \cong
    \begin{cases}
        \ds
        (1+q)(1+q+\cdots+q^{2h-1})\one
        \oplus
        q^h\left(C_{\alpha} \oplus C_\beta - \mathsf{P}_h - \mathsf{P}_{h-1}\right),
            &\text{if } \abs{H}=2h,\\[10pt]
        \ds
        (1+q)(1+q+\cdots+q^{2h})\one
        \oplus
        (q^h+q^{h+1})\left(C_\alpha - \mathsf{P}_h\right),
            &\text{if } \abs{H}=2h+1.
    \end{cases}
    \end{equation}
    In particular, the graded representation $\lqot{H}$ is palindromic.
\end{theorem}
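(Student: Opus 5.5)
The plan is to compute the graded character of $\lqot{H}$ degree by degree, using the flow-up basis of Theorem~\ref{thm:flowup} and its explicit description in Proposition~\ref{prop:flowup_explicit_setup} and Corollary~\ref{cor:rationally_smooth}. Since $\Ms$ is free over the left copy of $\poly$, the classes $[\spl{\rho}_w]$ form a homogeneous $\C$-basis of $\lqot{H}$. Thus $(\lqot{H})_d$ has dimension equal to the coefficient of $q^d$ in $P_H(q)$ from Corollary~\ref{cor:rank_gf}. The first summand $(1+q)(1+q+\cdots+q^{|H|-1})\one$ matches the ``two per degree'' part of $P_H$. The excess is $(2n-4h)q^h$ in the even case and $(n-2h-1)(q^h+q^{h+1})$ in the odd case, and the nontrivial representation must sit in that excess. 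So I will first show that the dot action is trivial on $(\lqot{H})_d$ for $d\notin\{h,h+1\}$, and then identify the middle degrees.

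For the outer degrees the key tool is the comparison with the Schubert case. For $w$ with $\ell(w)<h$ the basis element is a Schubert spline $\spl{\sigma}_w$, and Proposition~\ref{prop:left_schubert_rep} shows that $u\cdot\spl{\sigma}_w-\spl{\sigma}_w$ lies in $J\splines{\Phi^+}\subseteq J\Ms$. Indeed, the Borel presentation of Lemma~\ref{lem:borel_presentation_schubert_splines} has $u$ acting only on the left tensor factor, so $u(f)\otimes g - f\otimes g\in J\otimes g$ for homogeneous $f$ of positive degree. I therefore expect the action to be trivial in degrees below $h$. Degrees above $h+1$ should follow by a symmetry argument. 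One route is the involution $w\mapsto w_0w$ with $\hinv(w_0w)=H\setminus\hinv(w)$. A more robust route is a Poincar\'e-duality-type pairing: multiply by the top class $\spl{\rho}_{w_0}$ and use the ``localization sum'' $\sum_v \spl{\rho}(v)/\prod_{\alpha\in H}t_{v(\alpha)}$ to pair degree $d$ with degree $|H|-d$, twisted by $\sgn$-like characters that I will check are trivial here. As a fallback, I can show directly that for $\ell(w)$ large the upper-ideal splines differ from their translates by elements of $J\Ms$, by expressing them as $\poly$-combinations of Schubert splines restricted to $H$.

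The heart of the proof is the middle degree(s). In the even case, degree $h$ contains the splines of Corollary~\ref{cor:rationally_smooth} with supports $G=\{w,ws_1,\dots\}$ or $\{w,ws_2,\dots\}$ of size $2h$. The dot action permutes these sets. The stabilizer of a support of the first type is generated by a reflection conjugate to $s_\gamma$ for the appropriate $\gamma$, so the sets of each type form an $I_2(n)$-orbit of size $n$, and by the uniqueness statement in Corollary~\ref{cor:rationally_smooth} the stabilizer acts on $\spl{\sigma}_G$ by a scalar. I would show that this scalar is $+1$, so that the span of each orbit is $C_\alpha$, respectively $C_\beta$. By Proposition~\ref{prop:linear_independence} these $2n$ functions are linearly independent in $\Ms$, giving an embedding $C_\alpha\oplus C_\beta\hookrightarrow(\Ms)_h$. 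The remaining step is to compute the image in $(\lqot{H})_h$, namely the intersection of this span with $(J\Ms)_h$ plus the trivial Schubert-type classes. I claim the image is $C_\alpha\oplus C_\beta$ modulo a copy of $\mathsf{P}_h\oplus\mathsf{P}_{h-1}$, with two trivial summands reappearing inside the $(1+q)(\cdots)\one$ part. To get this, I will identify $(J\Ms)_h=x(\Ms)_{h-1}+y(\Ms)_{h-1}$ with $V_1\otimes(\Ms)_{h-1}$, and use the fact that $(\Ms)_{h-1}$ is, modulo lower terms, built from Schubert pieces whose characters involve the coinvariant representations $V_j$ with $j\le h-1$. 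The odd case runs the same way, but now only $H$-dominant supports of size $2h+1$ occur (Corollary~\ref{cor:admissible_block_independence}), contributing $C_\alpha$ in degree $h$ and, via $w\mapsto w_0w$ symmetry, again in degree $h+1$.

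The main obstacle is the exact computation of $(J\Ms)_h\cap\spann\{\spl{\sigma}_G\}$: showing that the subtracted part is precisely $\mathsf{P}_h+\mathsf{P}_{h-1}$ and not some other representation of the same dimension. I would attack this by computing characters. Since the total dimension is fixed by $P_H$, it suffices to compute the trace of a rotation $r^a$ on $(\lqot{H})_h$. To do so, I will evaluate splines at $e$ modulo $J$-multiples, using restriction to the coinvariant-like quotient $\poly_h$, where $r^a$ acts with trace $\sum_{j\le h}2\cos(2\pi ja/n)$-type sums.
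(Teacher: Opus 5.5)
There is a genuine gap, and it sits in the middle degree, which you correctly identify as the crux. Several of your other steps do match the paper. The low-degree argument is right: for $\ell(w)<h$ the basis splines are Schubert splines, and the Borel presentation puts $u\cdot\spl{\sigma}_w-\spl{\sigma}_w$ in $J\splines{\Phi^+}$. The embedding $C_\alpha\oplus C_\beta\hookrightarrow(\Ms)_h$ via orbits of the splines of Corollary~\ref{cor:rationally_smooth} and Proposition~\ref{prop:linear_independence} is also right. Your localization functional $\Lambda(\spl{\rho})=\sum_v\spl{\rho}(v)/\prod_{\gamma\in H}t_{v(\gamma)}$ is a workable alternative to the paper's duality proof. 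Residues along each $t_\gamma$ cancel in pairs $v,s_\gamma v$, and $\Lambda(u\cdot\spl{\rho})=u(\Lambda(\spl{\rho}))$ makes the top pairing invariant. Perfectness, however, still needs the flow-up/flow-down triangularity.

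The route you sketch for degree $h$ fails in three places.
\begin{enumerate}
\item $(J\Ms)_h$ is not $V_1\otimes(\Ms)_{h-1}$. That surjection has the Koszul kernel $\sgn\otimes(\Ms)_{h-2}$, and already for $h=2$ the two sides have dimensions $8$ and $7$.
\item Your target $(J\Ms)_h\cap\spann\{\spl{\sigma}_G\}\cong\mathsf{P}_h\oplus\mathsf{P}_{h-1}$ is false. Take $n=3$ and $H=\{\alpha_1,\alpha_2\}$. The six independent orbit splines span all of $(\Ms)_1$, so the intersection is $(J\Ms)_1\cong V_1$, not $\one^{\oplus 2}\oplus V_1$. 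The trivial constituents of $C_\alpha\oplus C_\beta$ survive there. The term $-\mathsf{P}_h-\mathsf{P}_{h-1}$ is bookkeeping in the representation ring, not a computed intersection.
\item Rotation traces cannot distinguish $\one$ from $\sgn$, or $\eta_0$ from $\eta_1$. That is exactly what $H$-dominance decides when $n$ is even and $\abs{H}$ is odd. Also, evaluation at $e$ does not intertwine the dot action, which moves vertices.
\end{enumerate}

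The missing idea is that no intersection has to be computed; you only need to bound multiplicities. Equivariant freeness $\Ms\cong\poly\otimes\lqot{H}$ gives $[(J\Ms)_h]=\sum_{d=1}^{h}[\poly_d]\,[(\lqot{H})_{h-d}]$. The lower pieces are already known to be $\one$ and $\one^{\oplus 2}$, so this equals $\one^{\oplus(h-1)}\oplus\bigoplus_{j=1}^hV_j^{\oplus(h+1-j)}$. That contains no $V_j$ with $j>h$, no $\eta_i$, and only one $V_h$. By Maschke, every such constituent of $C_\alpha\oplus C_\beta$ (respectively $C_\alpha$) inside $(\Ms)_h$ therefore survives in $(\lqot{H})_h$.

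Two independent trivial classes also survive in degree $h$. For instance, $(J\splines{\Phi^+})_h=(J\Ms)_h$ makes $(\lqot{\Phi^+})_h\to(\lqot{H})_h$ injective. Invariance alone does not show these classes are nonzero. The dimension from Corollary~\ref{cor:rank_gf} then leaves no room for anything else. Duality handles degree $h+1$ in the odd case and all higher degrees.

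Three smaller repairs are needed. In the odd case the supports have $2h+2$ elements, not $2h+1$. You must check that the orbit's stabilizer is conjugate to $\langle s_\alpha\rangle$ for the $H$-dominant $\alpha$. Finally, the stabilizer acts by $+1$ because $s_\alpha\cdot\partial_\alpha\spl{\rho}=\partial_\alpha\spl{\rho}$.
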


\begin{remark}
Taking dimensions in \eqref{eq:left_hessenberg_representation} recovers the rank generating function in Corollary~\ref{cor:rank_gf}. 
\end{remark}
\begin{example}
    When $H=\Phi^+$, the theorem as written does not apply. Its formula extends to this case when $n$ is odd, since then $C_\alpha \cong \mathsf{P}_h$. When $n$ is even, $h=n/2$ lies outside the defined range of $\mathsf{P}_h$. In either case, the representation is
\[
    \lqot{\Phi^+}
        \cong
    (1+q)(1+q+\cdots+q^{n-1})\one,
\]
which is Proposition~\ref{prop:left_schubert_rep}.
\end{example}

\begin{example}
When $H=\{\alpha_1,\alpha_2\}$, one has $h=1$, so
\begin{align*}
    C_{\alpha_1}\oplus C_{\alpha_2} - \mathsf{P}_h - \mathsf{P}_{h-1}
        &\cong
    C_{\alpha_1}\oplus C_{\alpha_2} - V_1 - \one^{\oplus 2}
\end{align*}
It follows that
\[
    \lqot{H}
        \cong
        \begin{cases}
    \ds \one
    \oplus
    \left(
        \one^{\oplus 2}
        \oplus V_1 \oplus 
        \bigoplus_{j=2}^{(n-1)/2}V_j^{\oplus 2}
    \right)q
    \oplus
    \one q^2 &n\text{  odd}\\
    \ds \one
    \oplus
    \left(
        \one^{\oplus 2}
        \oplus
        \eta_0
        \oplus
        \eta_1
        \oplus
        V_1 \oplus 
        \bigoplus_{j=2}^{n/2-1}V_j^{\oplus 2}
    \right)q
    \oplus
    \one q^2 &n\text{  even}
    \end{cases}
\]

\end{example}

\begin{example}
    When $H = \Phi^+ \setminus \{\alpha_{n}\}$ and $n$ is even, it follows that $h = n/2-1$ and if $\alpha$ is the $H$-dominant simple root then
    \[
    \lqot{H} \cong (1+q)(1+q+\cdots+q^{n-2})\one
        \oplus
        (q^{n/2-1}+q^{n/2})\eta_\alpha.
    \]
    When $n$ is odd, it is the case that $h = (n-1)/2$ and 
    \[
    \lqot{H} \cong (1+q)(1+q+\cdots+q^{n-2})\one
        \oplus
        (q^{(n-1)/2})V_{(n-1)/2}.
    \]
\end{example}
The remainder of this section is dedicated to proving Theorem \ref{thm:left_representation}.  The key tool is \emph{equivariant freeness}. Assume throughout that $H\neq\Phi^+$, since the full-reflection case has already been proved. In particular, if $\abs{H}=2h$ or $2h+1$, then $h<n/2$. Write $\poly_d$ for the homogeneous degree-$d$ piece of
$\poly$. The quotient map
\[
    \Ms\longrightarrow\lqot{H}
\]
is graded and $I_2(n)$-equivariant. Since $\Ms$ is free over $\poly$ by
Theorem~\ref{thm:flowup}, by Maschke's theorem we can choose a graded equivariant section of this quotient. Together with graded Nakayama, this gives a noncanonical isomorphism
\begin{equation}
\label{eq:left_equivariant_freeness}
    \Ms
    \cong
    \poly\otimes_{\C}\lqot{H}
\end{equation}
of graded $I_2(n)$-representations, where $I_2(n)$ acts diagonally on the
right-hand side. Consequently,
\[
    (\Ms)_d
    \cong
    \bigoplus_{j=0}^{d}
    \poly_j\otimes(\lqot{H})_{d-j}.
\]

The following proposition proves that $\lqot{H}$ is palindromic, which reduces the Theorem \ref{thm:left_representation} to calculating $(\lqot{H})_d$ for $d \leq h$.

\begin{proposition}
\label{prop:left_quotient_duality}
The top graded piece of $\lqot{H}$ is the trivial representation. For every
$0\leq d\leq\abs{H}$, multiplication in the graded algebra $\lqot{H}$
restricts to a perfect $I_2(n)$-invariant pairing
\[
    (\lqot{H})_d
    \otimes
    (\lqot{H})_{\abs{H}-d}
    \longrightarrow
    (\lqot{H})_{\abs{H}}
    \cong
    \one.
\]
Consequently,
\begin{equation}
\label{eq:left_representation_duality}
    (\lqot{H})_{\abs{H}-d}
        \cong
    (\lqot{H})_d^*
        \cong
    (\lqot{H})_d
\end{equation}
as $I_2(n)$-representations.
\end{proposition}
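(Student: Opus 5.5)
The plan is to realize the pairing through an equivariant ``integration'' functional and to test it on the flow-up basis of Theorem~\ref{thm:flowup}. For $v\in I_2(n)$ put $e(v)\coloneqq\prod_{\alpha\in H}t_{v(\alpha)}$, and for $\spl{\sigma}\in\Ms$ define
\[
\textstyle\int\spl{\sigma}\coloneqq\sum_{v\in I_2(n)}\spl{\sigma}(v)/e(v)\in\operatorname{Frac}(\poly).
\]
First, $\lqot{H}$ is a graded algebra, since $J\Ms=\llangle\spl{x}_l,\spl{y}_l\rrangle\Ms$ is an ideal of the ring $\Ms$. By Corollary~\ref{cor:rank_gf} its top piece $(\lqot{H})_{\abs{H}}$ is one-dimensional and spanned by $[\spl{\rho}_{w_0}]$.

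\textbf{Step 1: $\int$ is polynomial valued.} The only possible poles are along lines $t_\gamma$ with $\gamma\in\Phi^+$. Since the roots are pairwise non-proportional, $t_\gamma$ divides $e(v)$ exactly once when $\gamma=\pm v(\alpha)$ for some $\alpha\in H$, that is, when $(v,s_\gamma v)\in\ce_H$, and otherwise it does not divide $e(v)$. Vertices whose denominators contain $t_\gamma$ therefore come in pairs $\{v,s_\gamma v\}$. Write $e(v)=t_\gamma g$. Since $e(uv')=u(e(v'))$, we get $e(s_\gamma v)=s_\gamma(e(v))=-t_\gamma\,s_\gamma(g)$. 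The paired terms combine to
\[
\bigl(\spl{\sigma}(v)s_\gamma(g)-\spl{\sigma}(s_\gamma v)g\bigr)\big/\bigl(t_\gamma g\,s_\gamma(g)\bigr).
\]
The numerator vanishes modulo $t_\gamma$: we have $\spl{\sigma}(v)\equiv\spl{\sigma}(s_\gamma v)$ by the spline condition and $g\equiv s_\gamma(g)$ by Lemma~\ref{lem:reflect_divisible}. Moreover $t_\gamma\nmid g$. Hence no pole survives, and $\int\colon\Ms\to\poly$ lowers degree by $\abs{H}$. It is linear for the left action, so $\int(J\Ms)\subseteq J$. The identity $e(uv')=u(e(v'))$ also gives $\int(u\cdot\spl{\sigma})=u(\int\spl{\sigma})$. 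In degree $\abs{H}$ the output is a constant, so $\int$ induces an $I_2(n)$-equivariant map $(\lqot{H})_{\abs{H}}\to\one$. This map is nonzero because $\int\spl{\rho}_{w_0}=\spl{\rho}_{w_0}(w_0)/e(w_0)\in\C^\times$. Consequently the top piece is trivial.

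\textbf{Step 2: perfectness via a triangular Gram matrix.} Define $\langle[\spl{\sigma}],[\spl{\tau}]\rangle\coloneqq\int\spl{\sigma}\spl{\tau}\bmod J$. This is exactly multiplication into the top piece followed by the isomorphism of Step~1, and it is invariant by the equivariance above. Pair $\spl{\rho}_w$ with $\spl{\tau}_{w'}\coloneqq w_0\cdot\spl{\rho}_{w_0w'}$. The spline $\spl{\tau}_{w'}$ has degree $\hlen(w_0w')=\abs{H}-\hlen(w')$, using the involution from the proof of Corollary~\ref{cor:rank_gf}. Its support is $w_0\cdot\{v\geq w_0w'\}=\{v\leq w'\}$, because left multiplication by $w_0$ reverses Bruhat order. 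So $\spl{\rho}_w\spl{\tau}_{w'}=0$ unless $w\leq w'$, and the Gram matrix is triangular. On the diagonal, $\spl{\rho}_w\spl{\tau}_w$ is supported only at $w$. As in the proof of Lemma~\ref{lem:upper_triangular}, $\spl{\rho}_w(w)$ is a nonzero constant times $\prod_{\alpha\in\hinv(w)}t_{w(\alpha)}$. Likewise $\spl{\tau}_w(w)$ is a nonzero constant times the product over the complementary roots $H\setminus\hinv(w)$. Their product is therefore a unit multiple of $e(w)$, and $\int$ of it lies in $\C^\times$. The classes $[\spl{\rho}_w]$ form a basis of $\lqot{H}$, and so do the $[\spl{\tau}_{w'}]$ by the same flow-up argument with the order reversed. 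In each pair of complementary degrees the pairing matrix is therefore triangular with nonzero diagonal, which makes it perfect.

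\textbf{Step 3: conclusion.} A perfect invariant pairing into $\one$ gives $(\lqot{H})_{\abs{H}-d}\cong(\lqot{H})_d^*$. Every irreducible character of $I_2(n)$ is real, so every representation is self-dual, which gives the second isomorphism in \eqref{eq:left_representation_duality}. I expect the main obstacle to be Step~1, specifically the careful pole analysis showing that $t_\gamma$ appears in $e(v)$ exactly for $H$-edges at $v$. Some care is also needed in the diagonal computation of Step~2, where one has to identify the localization of $w_0\cdot\spl{\rho}_{w_0w}$ at $w$ with the complementary product.
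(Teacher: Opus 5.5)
Your proof is correct. Its second half follows the paper: you pair $\spl{\rho}_w$ against the flow-down splines $w_0\cdot\spl{\rho}_{w_0w}$, get triangularity from Bruhat supports, and compute the diagonal as two complementary products of labels.

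The difference is in how you produce the functional on the top degree and evaluate the diagonal.

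\begin{itemize}
\item The paper defines $\lambda$ abstractly by $\lambda([\spl{\rho}_{w_0}])=1$. It proves the top piece is trivial by applying the generalized divided-difference lemma to the point-supported spline $\spl{\rho}_{w_0}$, which gives $\spl{\rho}_{w_0}-s_i\cdot\spl{\rho}_{w_0}=t_{\alpha_i}\spl{\tau}_i\in J\Ms$. For the diagonal it identifies $\spl{\rho}_{u}\spl{\rho}^{\downarrow}_{u}$ with $ab\,(uw_0)\cdot\spl{\rho}_{w_0}$, and uses triviality again to see that its class is $ab[\spl{\rho}_{w_0}]$.
\item You instead build the explicit localization (Atiyah--Bott type) integral $\int$. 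Your check that it is polynomial-valued, by pairing $v$ with $s_\gamma v$ along each root line, is correct. Left-linearity and equivariance of $\int$ then give the trivial top piece and a canonical formula for the pairing at once. Each diagonal entry is then just $\int$ of a unit multiple of $e(w)$ supported at a single vertex.
\end{itemize}

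What each route buys:
\begin{itemize}
\item Yours gives a concrete, basis-free description of the pairing. It needs neither the divided-difference lemma nor the translation identity, and it transfers essentially verbatim to other reflection groups once a flow-up basis is available.
\item The paper's stays entirely inside the spline module and avoids the residue computation with rational functions.
\item For the basis property of the flow-down classes, the paper uses semilinearity of $w_0\cdot$, which is slightly quicker than rerunning Lemma~\ref{lem:upper_triangular} with the order reversed. Both work.
\end{itemize}

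One small wording fix: the support of $w_0\cdot\spl{\rho}_{w_0w'}$ is only contained in $\{v\leq w'\}$, not equal to it. The translated flow-up splines $s_\alpha\cdot\spl{\rho}_{s_\alpha w}$ do not fill the whole Bruhat upper ideal. Containment is all your triangularity argument uses, so nothing breaks.
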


\begin{proof}
Let 
$\{\spl{\rho}_w\mid w\in I_2(n)\}$ be as in
Theorem~\ref{thm:flowup}, so that its $w_0$-element is given by \eqref{eq:top_spline}. 
For each $w\in I_2(n)$, define the corresponding flow-down spline by
$\spl{\rho}^{\downarrow}_w
\coloneqq w_0\cdot\spl{\rho}_{w_0w}$.

If $v\in\supp(\spl{\rho}^{\downarrow}_w)$, then
$\spl{\rho}_{w_0w}(w_0v)\neq0$, and hence $w_0w\leq w_0v$. Left
multiplication by $w_0$ reverses Bruhat order, so $v\leq w$. Therefore
\[
    \supp\left(\spl{\rho}^{\downarrow}_w\right)
        \subseteq
    \{v\in I_2(n)\mid v\leq w\}.
\]
Moreover,
$\spl{\rho}^{\downarrow}_w(w)
=w_0\bigl(\spl{\rho}_{w_0w}(w_0w)\bigr)\neq0$, so this spline is genuinely
flow-down from $w$.

For $\alpha\in H$,
\[
\begin{aligned}
    \alpha\in\hinv(w_0w)
    &\Longleftrightarrow
    \ell(w_0ws_\alpha)<\ell(w_0w)\\
    &\Longleftrightarrow
    n-\ell(ws_\alpha)<n-\ell(w)\\
    &\Longleftrightarrow
    \alpha\notin\hinv(w).
\end{aligned}
\]
Therefore $\hinv(w_0w)=H\setminus\hinv(w)$, and
$\deg\spl{\rho}^{\downarrow}_w
=\abs{H}-\hlen(w)$.

The dot action is semilinear for the left $\poly$-module structure:
$w_0\cdot(f\spl{\rho})
=w_0(f)(w_0\cdot\spl{\rho})$.
Since $f\mapsto w_0(f)$ is an automorphism of $\poly$, the splines
$\{\spl{\rho}^{\downarrow}_w\mid w\in I_2(n)\}$ form another homogeneous
left $\poly$-basis of $\Ms$. The action of $w_0$ also preserves
$\llangle x,y\rrangle$, so reducing either basis modulo
$\llangle x,y\rrangle\Ms$ gives a homogeneous basis of $\lqot{H}$.

The class $[\spl{\rho}_{w_0}]$ is one of these quotient basis elements and
is therefore nonzero. Corollary~\ref{cor:rank_gf} gives
$\dim(\lqot{H})_{\abs{H}}=1$, so this class spans the top degree. For each
simple reflection $s_i$, conjugation by $w_0$ gives
$w_0s_iw_0=s_j$ for some $j\in\{1,2\}$, and hence
$s_iw_0=w_0s_j$. Since $\alpha_j\in H$, the vertices $w_0$ and
$s_iw_0$ are joined by an edge of $\mathcal G_H$. The support of
$\spl{\rho}_{w_0}$ is the singleton $\{w_0\}$, so
Lemma~\ref{lem:generalized_divided_difference} gives
$\spl{\rho}_{w_0}-s_i\cdot\spl{\rho}_{w_0}
=t_{\alpha_i}\spl{\tau}_i$ for some $\spl{\tau}_i\in\Ms$.
The right-hand side vanishes in the left quotient. Thus both simple
reflections fix $[\spl{\rho}_{w_0}]$, and
$(\lqot{H})_{\abs{H}}\cong\one$.

Let $\lambda\colon(\lqot{H})_{\abs{H}}\to\C$ be the invariant linear map
satisfying $\lambda([\spl{\rho}_{w_0}])=1$. The submodule
$\llangle x,y\rrangle\Ms$ is a homogeneous $I_2(n)$-stable ideal of the
spline algebra. Thus $\lqot{H}$ is a graded algebra, and the dot action
descends to an action by algebra automorphisms. Multiplication and
$\lambda$ define a pairing
\[
    \langle-,-\rangle_d\colon
    (\lqot{H})_d\otimes(\lqot{H})_{\abs{H}-d}
    \longrightarrow
    \C,
    \qquad
    \langle a,b\rangle_d
        \coloneqq
    \lambda(ab).
\]
For $g\in I_2(n)$, the algebra-automorphism property and the invariance of
$\lambda$ give
$\langle g\cdot a,g\cdot b\rangle_d
=\lambda(g\cdot(ab))
=\lambda(ab)
=\langle a,b\rangle_d$.
Hence the pairing is $I_2(n)$-invariant.

Fix $d$, and list the elements $u_1,\ldots,u_r$ satisfying
$\hlen(u_i)=d$ in a linear extension of Bruhat order, so that
$u_i\leq u_j$ implies $i\leq j$. The classes
$[\spl{\rho}_{u_1}],\ldots,[\spl{\rho}_{u_r}]$ form a basis of
$(\lqot{H})_d$, while
$[\spl{\rho}^{\downarrow}_{u_1}],\ldots,
[\spl{\rho}^{\downarrow}_{u_r}]$ form a basis of
$(\lqot{H})_{\abs{H}-d}$.

Set
$M_{ij}
\coloneqq
\langle[\spl{\rho}_{u_i}],
[\spl{\rho}^{\downarrow}_{u_j}]\rangle_d$.
The product
$\spl{\rho}_{u_i}\spl{\rho}^{\downarrow}_{u_j}$ is supported in
$\{v\in I_2(n)\mid u_i\leq v\leq u_j\}$. If $i>j$, then
$u_i\nleq u_j$ by the choice of linear extension, so this interval is
empty and $M_{ij}=0$. Thus $M$ is upper triangular.

If $\alpha\in\hinv(u_i)$, then $u_is_\alpha<u_i$, so
$\spl{\rho}_{u_i}(u_is_\alpha)=0$. The spline condition along this edge
gives $t_{u_i(\alpha)}\mid\spl{\rho}_{u_i}(u_i)$. If
$\alpha\in H\setminus\hinv(u_i)$, then $u_is_\alpha>u_i$, so
$\spl{\rho}^{\downarrow}_{u_i}(u_is_\alpha)=0$ and
$t_{u_i(\alpha)}
\mid\spl{\rho}^{\downarrow}_{u_i}(u_i)$.

Since $u_i$ permutes the roots and $H\subseteq\Phi^+$ contains at most one
root from each root line, the linear forms $t_{u_i(\alpha)}$, for
$\alpha\in H$, are pairwise nonassociate. The ring $\poly$ is a unique
factorization domain, so the corresponding products divide the two spline
values. Their degrees equal the degrees of the respective splines, and
both splines are nonzero at $u_i$. There are therefore
$a_i,b_i\in\C^\times$ such that
\[
    \spl{\rho}_{u_i}(u_i)
        =
    a_i\prod_{\alpha\in\hinv(u_i)}t_{u_i(\alpha)},
    \qquad
    \spl{\rho}^{\downarrow}_{u_i}(u_i)
        =
    b_i\prod_{\alpha\in H\setminus\hinv(u_i)}
        t_{u_i(\alpha)}.
\]

Any vertex in the supports of both $\spl{\rho}_{u_i}$ and
$\spl{\rho}^{\downarrow}_{u_i}$ must satisfy
$u_i\leq v\leq u_i$, and hence must equal $u_i$. Their product is
therefore supported only at $u_i$, where its value is
$a_ib_i\prod_{\alpha\in H}t_{u_i(\alpha)}$.

The translated top spline $(u_iw_0)\cdot\spl{\rho}_{w_0}$ has support
$(u_iw_0)\{w_0\}=\{u_i\}$. Its value at $u_i$ is
\[
\begin{aligned}
    \bigl((u_iw_0)\cdot\spl{\rho}_{w_0}\bigr)(u_i)
    &=
    u_iw_0\left(
        \spl{\rho}_{w_0}\bigl((u_iw_0)^{-1}u_i\bigr)
    \right)\\
    &=
    u_iw_0\left(\spl{\rho}_{w_0}(w_0)\right)\\
    &=
    u_iw_0\left(
        \prod_{\alpha\in H}t_{w_0(\alpha)}
    \right)\\
    &=
    \prod_{\alpha\in H}t_{u_i(\alpha)}.
\end{aligned}
\]
The two point-supported splines consequently satisfy
\[
    \spl{\rho}_{u_i}\spl{\rho}^{\downarrow}_{u_i}
        =
    a_ib_i\,(u_iw_0)\cdot\spl{\rho}_{w_0}.
\]
Since the top quotient representation is trivial,
$[(u_iw_0)\cdot\spl{\rho}_{w_0}]
=[\spl{\rho}_{w_0}]$, and hence $M_{ii}=a_ib_i\neq0$.

The matrix $M$ is upper triangular with nonzero diagonal, so the pairing
is perfect. It induces an $I_2(n)$-equivariant isomorphism
$(\lqot{H})_{\abs{H}-d}\cong(\lqot{H})_d^*$. Every irreducible
representation of $I_2(n)$ is self-dual, since every element of
$I_2(n)$ is conjugate to its inverse. This proves
\eqref{eq:left_representation_duality}.
\end{proof}
The following Lemma shows how many copies of the trivial representation appear.
\begin{lemma}\label{lem:trivial_left_subreps}
    The representation $\lqot{H}$ contains a sub-representation isomorphic to
    \[(1+q)(1+q+\cdots+q^{\abs{H}-1})\,\one.\]
\end{lemma}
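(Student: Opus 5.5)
The plan is to produce enough invariant classes in low degrees from the Schubert splines, and then transport them to high degrees with the perfect pairing of Proposition~\ref{prop:left_quotient_duality}.

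\emph{Step 1: Schubert classes are invariant in $\lqot{H}$.} Every spline on the full reflection graph is a spline on $\cg_H$, since it satisfies a superset of the divisibility conditions. Hence $\splines{\Phi^+}\subseteq\Ms$, and this inclusion is compatible with the dot action and with the left $\poly$-module structure. It therefore descends to an $I_2(n)$-equivariant graded map $\lqot{\Phi^+}\to\lqot{H}$. By Proposition~\ref{prop:left_schubert_rep}, $I_2(n)$ acts trivially on $\lqot{\Phi^+}$. So for every $w$, the class $[\spl{\sigma}_w]\in\lqot{H}$ is fixed by $I_2(n)$ and has degree $\ell(w)$.

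\emph{Step 2: linear independence in low degree.} Let $S=\{w\in I_2(n)\mid \hlen(w)=\ell(w)\}$. I claim the classes $[\spl{\sigma}_w]$, $w\in S$, are linearly independent in $\lqot{H}$. Expand each $\spl{\sigma}_w=\sum_u g_{w,u}\spl{\rho}_u$ in the flow-up basis of Theorem~\ref{thm:flowup}.
\begin{itemize}
\item The support of $\spl{\sigma}_w$ lies in the Bruhat upper ideal of $w$, so triangular elimination (as in Lemma~\ref{lem:upper_triangular}) gives $g_{w,u}=0$ unless $u\geq w$.
\item The coefficient of $\spl{\rho}_w$ is $\spl{\sigma}_w(w)/\spl{\rho}_w(w)$. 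By Proposition~\ref{prop:schubert_flowup}, $\spl{\sigma}_w(w)=\prod_{N_L(w)}t_\gamma$.
\item When $\hlen(w)=\ell(w)$, we have $N_R(w)\subseteq H$. So $\spl{\rho}_w(w)$ is a unit multiple of $\prod_{\alpha\in N_R(w)}t_{w(\alpha)}$, which is the same product of linear forms up to sign. Hence this coefficient is a nonzero constant.
\item Any $u>w$ with $\deg\spl{\rho}_u=\ell(w)$ enters with a constant coefficient.
\end{itemize}
Reducing modulo $\llangle x,y\rrangle\Ms$, the matrix expressing $\{[\spl{\sigma}_w]\}_{w\in S}$ in the basis $\{[\spl{\rho}_u]\}$ of $\lqot{H}$ is unitriangular up to scalars, with respect to a linear extension of Bruhat order. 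This gives the independence.

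\emph{Step 3: counting.} The proof of Corollary~\ref{cor:rank_gf} shows that $\hlen(w)=\ell(w)$ whenever $\ell(w)<h$. I also need the two elements of length $h$ to lie in $S$. For this I will check that all $h$ right inversions of a length-$h$ element correspond to reflections of length at most $2h-1$, and these are in $H$ in both parity cases. This gives invariant classes of dimension $1,2,\ldots,2$ in degrees $0,1,\ldots,h$.

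\emph{Step 4: high degrees by duality.} The pairing of Proposition~\ref{prop:left_quotient_duality} is perfect and $I_2(n)$-invariant, so the multiplicity of $\one$ in $(\lqot{H})_{\abs{H}-d}$ equals its multiplicity in $(\lqot{H})_d$. This gives two trivial summands in each degree $|H|-d$ for $1\leq d\leq h$, and one in degree $|H|$. The degrees $\{0,\ldots,h\}\cup\{|H|-h,\ldots,|H|\}$ cover $0,\ldots,|H|$. In the even case $|H|=2h$, degree $h$ is counted only once, with multiplicity two. Assembling these summands gives $(1+q)(1+q+\cdots+q^{|H|-1})\one$ inside $\lqot{H}$.

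The main obstacle is Step 2 together with the degree-$h$ part of Step 3. Specifically, I must check carefully that $N_R(w)\subseteq H$ for the length-$h$ elements when $|H|=2h$, where the simple ascent edge need not lie in $\ce_H$. If this fails, I would first try the coordinate splines instead: the classes of $f(\spl{x}_r,\spl{y}_r)$ are automatically invariant, and I would show they are independent in degree $h$ by evaluating at the identity.
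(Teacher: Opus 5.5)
Your proof is correct, but it obtains the key injectivity $(\lqot{\Phi^+})_k\hookrightarrow(\lqot{H})_k$ for $k\leq h$ by a different route than the paper.

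\textbf{The paper's route.} The paper never uses the flow-up basis at this point. By Corollary~\ref{cor:rank_gf}, $\Hilb(\Ms;q)$ and $\Hilb(\splines{\Phi^+};q)$ agree through degree $h-1$, so the graded inclusion forces $(\splines{\Phi^+})_k=(\Ms)_k$ for $k\leq h-1$. Since $J$ lives in positive degree, this gives $(J\splines{\Phi^+})_k=(J\Ms)_k$ for $k\leq h$, and the induced map on quotients is injective in degrees $0,\ldots,h$. It then concludes with Proposition~\ref{prop:left_schubert_rep} and the duality of Proposition~\ref{prop:left_quotient_duality}, exactly as in your Steps~1 and~4.

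\textbf{Your route.} Your Step~2 replaces this dimension count with an explicit triangularity argument. You expand $\spl{\sigma}_w$ in the flow-up basis, show that the coefficients vanish unless $u\geq w$, and show that the diagonal coefficient $\spl{\sigma}_w(w)/\spl{\rho}_w(w)$ is a nonzero constant exactly because $N_R(w)\subseteq H$.

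\textbf{Comparison.} Your approach costs more bookkeeping: homogeneity and uniqueness of the coefficients, the diagonal values of both bases, and the fact that positive-degree coefficients die modulo $J\Ms$. In return it names concrete invariant representatives. It also shows directly that $[\spl{\sigma}_w]$ survives for every $w$ with $\hlen(w)=\ell(w)$, not only those with $\ell(w)\leq h$. The paper's argument is shorter and basis-free, and degrees up to $h$ are all that is needed before duality takes over.

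\textbf{Your flagged obstacle.} The check you worried about does go through. A length-$h$ element has as right inversions the roots of the reflections of lengths $1,3,\ldots,2h-1$ in its alternating expression. All of these lie in $\{\alpha_1,\ldots,\alpha_{2h}\}\subseteq H$ in both parity cases. The simple ascent edge leaving $\ce_H$ is an ascent, so it does not enter $N_R(w)$. The coordinate-spline fallback is therefore unnecessary.
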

\begin{proof}
    Consider different graded pieces of the injection $\splines{\Phi^+} \hookrightarrow \Ms$. For $0 \leq k \leq h-1$, the coefficient of $q^k$ in $\Hilb(\Ms;q)$ is the same as in $\Hilb(\splines{\Phi^+};q)$. So this injection must be a surjection, and so $\left(\splines{\Phi^+}\right)_k = \left(\Ms\right)_k$ for all $0 \leq k \leq h-1$. Since every element of $J$ is degree at least one,  $\left(J\splines{\Phi^+}\right)_k = \left(J\Ms\right)_k$ for all $1 \leq k \leq h$. Thus, when passing to the quotient, there is an injection
    \[\left(\lqot{\Phi^+}\right)_k \hookrightarrow \left(\lqot{H}\right)_k \qquad \text{for all}\;\; 0 \leq k \leq h.\]

    So $\lqot{H}$ contains a sub-representation isomorphic to $\left(1 + 2q + \cdots +2q^h\right)\one$. Proposition \ref{prop:left_quotient_duality} ensures that $\lqot{H}$ also contains $\left(q^{|H|} + 2q^{|H|-1} + \cdots + 2q^{|H|-h}\right)\one$ as a sub-representation, and the claim follows.
\end{proof}

\begin{proof}[Proof of Theorem \ref{thm:left_representation}]
    Set $h=\lfloor |H|/2\rfloor$. We first construct
    permutation subrepresentations of $(\Ms)_h$, then
    compare their multiplicities with those
    of $(J\Ms)_h$ to determine $(\lqot{H})_h$.

    To construct these subrepresentations, we choose
    degree-$h$ basis splines whose supports allow us
    to determine their stabilizers and prove independence
    of their dot-action orbits.
    Let $z$ be an alternating word of length $t$ ending
    in $s_i$, where $i\in\{1,2\}$ and $2t\leq n$.
    Taking the reflections associated to successive
    suffixes of the reduced word for $z$ gives
    \[
        N_R(z)=\{\alpha_i,\alpha_{i+2},\ldots,
                         \alpha_{i+2t-2}\}.
    \]
    Since $w_0$ interchanges positive and negative roots,
    \[
        N_R(w_0z)=\Phi^+\setminus N_R(z).
    \]

    Suppose first that $|H|=2h$. Take the two elements $z$
    of length $h$. Their right inversion sets are both
    contained in $H=\{\alpha_1,\ldots,\alpha_{2h}\}$.
    The corresponding elements $w_0z$ have length $n-h$
    and $H$-length $h$. Denote them by $w$ and $v$, with
    $s_\alpha$ the simple left ascent of $w$ and
    $s_\beta$ the simple left ascent of $v$.

    If $|H|=2h+1$, then $H$ contains exactly one of
    $\alpha_{2h+1}$ and $\alpha_{2h+2}$, together with
    $\alpha_1,\ldots,\alpha_{2h}$. Take the unique element
    $z$ of length $h+1$ whose right inversion set is
    contained in $H$, and set $w=w_0z$. Then
    \[
        \ell(w)=n-h-1,
        \qquad
        \hlen(w)=|H|-|N_R(z)|=h.
    \]
    Write $s_\alpha$ for the simple left ascent of $w$,
    and let $\beta$ be the other simple root. We check
    below that $\alpha$ is $H$-dominant.

    In either case, write $w=w_0z$ and $t=\ell(z)$.
    The reflection $w^{-1}s_\alpha w$ is the reflection
    of length $2t-1$ corresponding to the last root in
    the displayed description of $N_R(z)$. In particular,
    it corresponds to a root in $H$, so
    $(w,s_\alpha w)\in\ce_H$.

    On the other hand, if $2t<n$, then
    Lemma \ref{lem:leftright_length}, applied to the
    corresponding simple left ascent of $z$, gives
    \[
        \ell(w^{-1}s_\beta w)=2t+1.
    \]
    This is larger than the length of every reflection
    corresponding to a root in $H$. If $2t=n$, which
    can occur here only in the odd case, then
    $w^{-1}s_\beta w$ is the other reflection of length
    $2t-1=2h+1$, whose root is not in $H$. Thus
    $(w,s_\beta w)\notin\ce_H$ in every case.
    This verifies the conditions in
    Definition \ref{def:H_dominant}. In the even case,
    the same argument applies to $v$, with $\alpha$
    and $\beta$ interchanged.

    Since $H$ is proper, the lengths of these vertices
    satisfy $2\ell(w)+1>n$ and, in the even case,
    $2\ell(v)+1>n$. Thus Theorem \ref{thm:flowup} gives
    \[
        \spl{\rho}_w
        =\partial_\alpha\spl{\rho}_{s_\alpha w},
    \]
    and similarly
    $\spl{\rho}_v=\partial_\beta\spl{\rho}_{s_\beta v}$
    in the even case. These splines have degree $h$.

    Let
    \[
        U_w=\{u\in I_2(n)\mid w\leq u\}.
    \]
    The underlying undirected graph of
    $\cg_{\{\alpha,\beta\}}$ is a cycle of length $2n$.
    By the rank-two description of Bruhat order, $U_w$
    consists of $2t$ consecutive vertices in this cycle,
    with endpoints $w$ and $s_\alpha w$. The endpoint reflection
    $w^{-1}s_\alpha w$ corresponds to a root in $H$,
    so Lemma \ref{lem:regular} applies. Its proof shows that every vertex of even length
    in $U_w$ is adjacent to every vertex of odd length,
    with $t$ vertices of each parity. Thus each vertex
    has $t$ neighbors within $U_w$, and hence
    $|H|-t=h$ neighbors outside $U_w$.

    The spline $\spl{\rho}_w$ is nonzero and supported
    within $U_w$, and has degree $h$. Therefore
    Corollary \ref{cor:rationally_smooth} shows that its
    support is exactly $U_w$ and gives the product formula
    for its nonzero values. The same
    conclusions hold for $\spl{\rho}_v$ in the even case.

    Any element stabilizing $\spl{\rho}_w$ must stabilize
    $U_w$. Since the subgraph induced by $U_w$ in the cycle
    $\cg_{\{\alpha,\beta\}}$ is a path, any such element
    permutes its two endpoints. Left multiplication acts freely on
    vertices, so the stabilizer of $U_w$ has order at
    most two. Moreover,
    \[
        s_\alpha\cdot
        \bigl(\partial_\alpha\spl{\rho}_{s_\alpha w}\bigr)
        =
        \partial_\alpha\spl{\rho}_{s_\alpha w},
    \]
    so $s_\alpha$ stabilizes $\spl{\rho}_w$. Hence
    \[
        \operatorname{Stab}(\spl{\rho}_w)
        =\operatorname{Stab}(U_w)
        =\llangle s_\alpha\rrangle.
    \]
    In the even case, the corresponding stabilizers
    for $\spl{\rho}_v$ and $U_v$ are
    $\llangle s_\beta\rrangle$.

    Left translates of these splines satisfy the
    product formula in
    Corollary \ref{cor:rationally_smooth} on their
    translated supports.
    Since $\spl{\rho}_w$ and its support have the same
    stabilizer, distinct splines in its orbit have
    distinct supports. The analogous statement holds
    for $\spl{\rho}_v$.
    In the even case, the elements $w$ and $v$ have
    the same length and different simple right ascents.
    Starting from $w$ or $v$, the path through its
    upper order ideal begins with an edge given by
    right multiplication by its simple right ascent.
    Each path has $2h-1$ edges, given alternately by
    right multiplication by the two simple reflections.
    Thus its first and last edges are given by the
    same simple reflection, and this reflection differs
    between the two paths.
    Left multiplication sends an edge $\{u,us_i\}$ to
    $\{gu,(gu)s_i\}$, so it preserves the simple
    reflection defining the edge. Therefore no left
    translate of $U_w$ equals $U_v$, and their orbits
    under left multiplication are disjoint.

    Proposition \ref{prop:linear_independence}, with
    $k=h-1$, now shows that the dot-action orbits of
    $\spl{\rho}_w$ and $\spl{\rho}_v$ in the even case
    are jointly linearly independent. In the odd case,
    Corollary \ref{cor:admissible_block_independence},
    with $k=h$, shows that the dot-action orbit of
    $\spl{\rho}_w$ is linearly independent. Their stabilizers therefore give
    \[
        C_\alpha
        \cong
        \spann_\C\bigl(I_2(n)\cdot\spl{\rho}_w\bigr),
    \]
    and, in the even case,
    \[
        C_\beta
        \cong
        \spann_\C\bigl(I_2(n)\cdot\spl{\rho}_v\bigr).
    \]
    In particular, we have inclusions
    \[
        \begin{cases}
            C_\alpha\oplus C_\beta
            \hookrightarrow(\Ms)_h,
                & |H|=2h,\\
            C_\alpha\hookrightarrow(\Ms)_h,
                & |H|=2h+1.
        \end{cases}
    \]

    For $1\leq k<h$, Lemma \ref{lem:trivial_left_subreps}
    and Corollary \ref{cor:rank_gf} give
    $(\lqot{H})_k\cong\one^{\oplus 2}$, while
    $(\lqot{H})_0\cong\one$. Thus equivariant freeness
    gives, in the representation ring of $I_2(n)$,
    \begin{equation}\label{eq:left_middle_subtraction}
        [(\lqot{H})_h]
        =
        [(\Ms)_h]
        -
        [\poly_h]
        -
        2\sum_{d=1}^{h-1}[\poly_d].
    \end{equation}
    For $1\leq d<n/2$, the symmetric powers of the
    defining representation decompose as
    \[
        \poly_d
        \cong
        \bigoplus_{j=0}^{\lfloor(d-1)/2\rfloor}V_{d-2j}
        \oplus
        \begin{cases}
            0, & d\text{ is odd},\\
            \one, & d\text{ is even}.
        \end{cases}
    \]
    Since $H$ is proper, $h<n/2$, so
    \begin{align*}
        \C[x,y]_h
        \oplus 2\bigoplus_{d=1}^{h-1}\C[x,y]_d
        &\cong
        \left(\bigoplus_{d=1}^h\C[x,y]_d\right)
        \oplus
        \left(\bigoplus_{d=1}^{h-1}\C[x,y]_d\right)\\
        &\cong
        \one^{\oplus(h-1)}
        \oplus
        \bigoplus_{j=1}^h V_j^{\oplus(h+1-j)}.
    \end{align*}
    This is the decomposition of the subrepresentation
    $(J\Ms)_h$ removed in passing to the quotient.
    In particular, it contains no $V_j$ with $j>h$,
    no copy of either $\eta_0$ or $\eta_1$ when $n$
    is even, and only one copy of $V_h$.

    Since $(J\Ms)_h$ contains no $V_j$ with $j>h$
    and no $\eta_i$, all copies of these constituents
    in the permutation subrepresentations constructed
    above survive in the quotient. In the even case,
    $C_\alpha\oplus C_\beta$ also supplies two copies
    of $V_h$, whereas $(J\Ms)_h$ contains only one,
    so at least one survives. Thus
    \eqref{eq:reflection_coset_decomposition} and
    \eqref{eq:Ph_representation} show that the
    constituents of
    \[
        \begin{cases}
            C_\alpha\oplus C_\beta
                -\mathsf{P}_h-\mathsf{P}_{h-1},
                & |H|\text{ even},\\
            C_\alpha-\mathsf{P}_h,
                & |H|\text{ odd},
        \end{cases}
    \]
    survive in $(\lqot{H})_h$ with at least the
    indicated multiplicities.

    Lemma \ref{lem:trivial_left_subreps} gives two trivial
    constituents in degree $h$, and
    Corollary \ref{cor:rank_gf} leaves no room for any
    other irreducible summands. Thus, when $|H|$ is even,
    \[
        (\lqot{H})_h
        \cong
        \one^{\oplus 2}
        \oplus
        \left(
            C_\alpha\oplus C_\beta
            -\mathsf{P}_h-\mathsf{P}_{h-1}
        \right).
    \]
    When $|H|$ is odd,
    \[
        (\lqot{H})_h
        \cong
        \one^{\oplus 2}
        \oplus
        \left(C_\alpha-\mathsf{P}_h\right),
    \]
    and Proposition \ref{prop:left_quotient_duality}
    gives the same decomposition in degree $h+1$.

    Finally, Proposition \ref{prop:left_quotient_duality}
    determines the remaining upper degrees from the
    lower degrees already computed. In particular,
    $(\lqot{H})_k\cong\one^{\oplus 2}$ for
    $|H|-h<k<|H|$, and
    $(\lqot{H})_{|H|}\cong\one$.
    This proves the stated formula.
\end{proof}

\begin{corollary}\label{cor:not_permutation}
    The left representation $\lqot{H}$ is not always a permutation representation. Moreover, this can fail in the first degree.
\end{corollary}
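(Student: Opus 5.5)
The plan is to exhibit one explicit small case and read off its degree-one piece from Theorem~\ref{thm:left_representation}. I would take $H=\{\alpha_1,\alpha_2\}$, so $\abs{H}=2$ and $h=1$. The example following Theorem~\ref{thm:left_representation} already gives
\[
    (\lqot{H})_1 \cong \one^{\oplus 2}\oplus V_1\oplus\bigoplus_{j=2}^{(n-1)/2}V_j^{\oplus 2}
\]
for $n$ odd, with the analogous formula (including $\eta_0\oplus\eta_1$) for $n$ even. For concreteness I would specialize to $n=3$, where $(\lqot{H})_1\cong\one^{\oplus 2}\oplus V_1$, a representation of dimension $4$.

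The argument that this is not a permutation representation goes as follows. Every permutation representation of a finite group is a direct sum of transitive ones $\operatorname{Ind}_K^{G}\one$, where $K$ is a subgroup. For $I_2(3)\cong S_3$, the transitive permutation representations are:
\begin{itemize}
    \item $\one$ (from $K=G$), of dimension $1$;
    \item $\one\oplus\sgn$ (from $K$ of order $3$), of dimension $2$;
    \item $\one\oplus V_1$ (from $K$ of order $2$), of dimension $3$;
    \item the regular representation $\one\oplus\sgn\oplus V_1^{\oplus 2}$, of dimension $6$.
\end{itemize}
Each transitive summand contributes exactly one copy of $\one$. Hence a $4$-dimensional permutation representation with exactly two trivial constituents would have to be a sum of two transitive pieces of total dimension $4$. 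The only options are $1+3$ and $2+2$, which give $\one^{\oplus 2}\oplus V_1$ and $\one^{\oplus 2}\oplus\sgn^{\oplus 2}$ respectively.

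This counting does not immediately work, because $\one\oplus C_{\alpha}$ has dimension $4$ and equals $\one^{\oplus2}\oplus V_1$. So $n=3$ does not yield a counterexample, and I would switch to a larger $n$. The cleanest tool is a fixed-point test: a permutation character must be nonnegative on every group element, and in a permutation representation the multiplicity of $\one$ equals the number of orbits.

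For general $n\geq 5$ odd, degree one is $\one^{\oplus 2}\oplus V_1\oplus\bigoplus_{j\ge2}V_j^{\oplus 2}$, so the trivial multiplicity is $2$ and there are at most two orbits. The representation has dimension $2n$, and the nontrivial part is $2\bigl(\mathsf{P}_{(n-1)/2}-\one\bigr)-V_1$. I would evaluate the character at $s_1$. There the value is $2$, since $\chi_{V_j}(s_1)=0$. Two transitive permutation sets of total size $2n$ have the form $G/K_1\sqcup G/K_2$. Matching the trivial multiplicity $2$ and the total dimension $2n$ leaves the options
\[
    C_\gamma\oplus C_{\gamma'},\qquad \text{regular}\oplus\text{point},\qquad G/\langle r\rangle\text{-type sums}.
\]
Comparing multiplicities of $V_1$ (only one copy here, whereas $C_\gamma\oplus C_{\gamma'}$ has two and the regular representation has two) and of $\sgn$ (zero here) rules each of these out.

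The main obstacle is this enumeration of subgroups of $I_2(n)$ and their induced characters. I would handle it by listing the subgroups as cyclic $\langle r^d\rangle$, dihedral $\langle r^d, r^as_1\rangle$, and the trivial group, and then recording the multiplicity of $V_1$ in each induced representation. Every transitive piece of dimension greater than $2$ either contains $V_1$ together with $V_2$ with matching multiplicity, or contains $\sgn$. That makes the single $V_1$ next to doubled $V_j$, $j\ge2$, impossible.
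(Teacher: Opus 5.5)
Your conclusion is right and your closing argument works, but it takes a different route from the paper's, and one intermediate step is wrong.

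\textbf{Comparison with the paper.} The paper takes $n=5$, $H=\{\alpha_1,\alpha_2,\alpha_3\}$. There Theorem~\ref{thm:left_representation} gives $(\lqot{H})_1\cong\one^{\oplus 2}\oplus V_2$ and, ungraded, $\lqot{H}\cong\one^{\oplus 6}\oplus V_2^{\oplus 2}$. The paper then just observes that the character values at $r$, namely $\frac12(3-\sqrt5)$ and $5-\sqrt5$, are not integers.

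You use $H=\{\alpha_1,\alpha_2\}$ with $n\geq 5$ odd, and a multiplicity obstruction:
\begin{itemize}
\item if $K\subseteq\langle r\rangle$, then $\sgn$ occurs in $\operatorname{Ind}_K^{I_2(n)}\one$;
\item if $K$ is dihedral of index $d$, Frobenius reciprocity shows $V_j$ occurs (once) iff $\frac{n}{d}\mid j$, so for $n$ odd the multiplicities of $V_1$ and $V_2$ agree.
\end{itemize}
Hence no permutation representation without $\sgn$ can be $\one^{\oplus 2}\oplus V_1\oplus\bigoplus_{j\geq 2}V_j^{\oplus 2}$. This is valid, and it gives a whole family with the smallest Hessenberg space. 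Two fixes are needed. First, phrase the dichotomy as ``equal multiplicities, possibly both zero''; for $n=9$ the index-$3$ dihedral subgroup gives $\one\oplus V_3$, which contains neither $V_1$ nor $V_2$. Second, apply it to all transitive pieces, not only those of dimension greater than $2$.

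\textbf{The error.} $(\lqot{H})_1$ has dimension $2n-2$, not $2n$. It is the $q$-coefficient of $P_H(q)=1+(2n-2)q+q^2$, and your own decomposition gives $2+2+4\cdot\frac{n-3}{2}$. So your list of two-orbit candidates is wrong; for instance, ``regular $\oplus$ point'' has dimension $2n+1$. The multiplicity argument never uses the dimension, so simply delete that paragraph.

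\textbf{A shorter finish.} You invoked the nonnegativity test but evaluated at $s_1$, where it is uninformative. At $r$, using $\sum_{j=1}^{(n-1)/2}2\cos\frac{2\pi j}{n}=-1$, the degree-one character is
\[
2+2\cos\tfrac{2\pi}{n}+4\sum_{j=2}^{(n-1)/2}\cos\tfrac{2\pi j}{n}=-2\cos\tfrac{2\pi}{n}<0 .
\]
This ends the proof at once and is essentially the paper's argument applied to your example.

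\textbf{The ungraded claim.} The corollary's first sentence concerns $\lqot{H}$ itself (the paper forgets the grading). Failure in one degree does not formally imply failure of the total. Your argument does apply verbatim to $\lqot{H}\cong\one^{\oplus 4}\oplus V_1\oplus\bigoplus_{j\geq 2}V_j^{\oplus 2}$. Alternatively, its character at $r$ is $2-2\cos\frac{2\pi}{n}\notin\mathbb{Z}$.
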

\begin{proof}
    For $n=5$ and $H=\{\alpha_1,\alpha_2,\alpha_3\}$, Theorem \ref{thm:left_representation} gives, after forgetting the grading,
    \[
        \lqot{H}\cong\one^{\oplus 6}\oplus V_2^{\oplus 2}.
    \]
    Its character value at $r$ is $6+4\cos(4\pi/5)=5-\sqrt5$. Since permutation characters take nonnegative integer values, $\lqot{H}$ is not a permutation representation. For the same representation we also get that $(\lqot{H})_1\cong \one^{\oplus 2}\oplus V_2$, whose character value at $r$ is $2+2\cos(4\pi/5)= \frac{1}{2}(3-\sqrt{5})$, which is also irrational.
\end{proof}


\section{The Right Representation}\label{sec:right_rep}
This section will compute the right action representation $\rqot{H}$ in Theorem \ref{thm:right_representation}. The proof will leverage equivariant freeness and Theorem \ref{thm:left_representation}, and follow a similar structure as the previous section.

Since $\Ms$ is free over the right action as well as the left, there is a noncanonical isomorphism
\begin{equation}\label{eq:right_equivariant_freeness}
    \Ms \cong
    \C[\spl{x}_r,\spl{y}_r] \otimes_\C \rqot{H}
    \cong 
    \left(\bigoplus_{i \geq 0} \one^{\oplus i+1}q^i\right) \otimes \rqot{H}
\end{equation}
of graded $I_2(n)$-representations, where $I_2(n)$ acts diagonally on the right-hand side. Consequently,
\[
\left(\Ms\right)_d \cong \bigoplus_{j=0}^d \one^{\oplus j+1} \otimes (\rqot{H})_{d-j} \cong \bigoplus_{j=0}^d (\rqot{H})_{d-j}^{\oplus j+1}.
\]

\begin{proposition}
\label{prop:right_quotient_duality}
The top graded piece of $\rqot{H}$ is the sign representation. For every
$0\leq d\leq\abs{H}$, multiplication in the graded algebra $\rqot{H}$
restricts to a perfect $I_2(n)$-equivariant pairing 
\[
    (\rqot{H})_d
    \otimes
    (\rqot{H})_{\abs{H}-d}
    \longrightarrow
    (\rqot{H})_{\abs{H}}
    \cong
    \sgn.
\]

Consequently,
\begin{equation}
\label{eq:right_representation_duality}
    \sgn \otimes (\rqot{H})_{\abs{H}-d}
        \cong
    (\rqot{H})_d
\end{equation}
as $I_2(n)$-representations.
\end{proposition}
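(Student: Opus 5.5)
We follow the proof of Proposition~\ref{prop:left_quotient_duality} closely, now reducing modulo $\llangle \spl{x}_r,\spl{y}_r\rrangle\Ms$. First, we note that the flow-up basis $\{\spl{\rho}_w\}$ of Theorem~\ref{thm:flowup} is also a right $\poly$-basis by Lemma~\ref{lem:upper_triangular}. The flow-down splines $\spl{\rho}^{\downarrow}_w=w_0\cdot\spl{\rho}_{w_0w}$ also form a right basis, because the dot action commutes with the right module structure: $u\cdot(f(\spl{x}_r,\spl{y}_r)\spl{\rho})=f(\spl{x}_r,\spl{y}_r)(u\cdot\spl{\rho})$, since $u\cdot\spl{x}_r=\spl{x}_r$. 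Hence both families reduce to homogeneous bases of $\rqot{H}$. In particular, $(\rqot{H})_{\abs{H}}$ is one-dimensional and spanned by $[\spl{\rho}_{w_0}]$, using Corollary~\ref{cor:rank_gf}.

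The key new step is identifying the top piece with $\sgn$. Take a simple reflection $s_i$ and argue as before. By Lemma~\ref{lem:generalized_divided_difference}, $\spl{\rho}_{w_0}-s_i\cdot\spl{\rho}_{w_0}=t_{\alpha_i}\spl{\tau}_i$ with $\spl{\tau}_i\in\Ms$. Here $t_{\alpha_i}$ multiplies through the \emph{left} structure, so this element need not vanish in the right quotient. Instead, we compute directly. The spline $s_i\cdot\spl{\rho}_{w_0}$ is supported at $s_iw_0$. The difference is supported on the edge $\{w_0,s_iw_0\}$, with value $\prod_{\alpha\in H}t_{w_0(\alpha)}$ at $w_0$ and $-s_i\bigl(\prod_{\alpha\in H}t_{w_0(\alpha)}\bigr)$ at $s_iw_0$.

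We want to show $\spl{\rho}_{w_0}+s_i\cdot\spl{\rho}_{w_0}\in\llangle\spl{x}_r,\spl{y}_r\rrangle\Ms$. Write $s_iw_0=w_0s_j$. The right module structure allows multiplication by $f(\spl{x}_r,\spl{y}_r)$, whose value at $u$ is $u(f)$. Take $f=t_{\alpha_j}$ (up to sign), so that $f(\spl{x}_r,\spl{y}_r)(w_0)=t_{w_0(\alpha_j)}$ and $f(\spl{x}_r,\spl{y}_r)(w_0s_j)=-t_{w_0(\alpha_j)}$. Let $\spl{\kappa}$ be a spline supported on $\{w_0,w_0s_j\}$ with value $P:=\prod_{\alpha\in H\setminus\{\alpha_j\}}t_{w_0(\alpha)}$ at both vertices. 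This is a spline: the edge between the two vertices has difference zero, and every other edge leaving either vertex has a label dividing $P$. We need to check this second claim at $w_0s_j$, where the labels are $t_{w_0s_j(\gamma)}$. Since $s_j$ permutes $H\setminus\{\alpha_j\}$ up to sign, this holds when $H$ is $s_j$-stable up to sign. Should this fail, we instead cite the degree-$(\abs{H}-1)$ flow-up spline at the coatom $w_0s_j$ from Theorem~\ref{thm:flowup}, which has exactly this support by Proposition~\ref{prop:flowup_explicit_setup}. Then $t_{\alpha_j}(\spl{x}_r,\spl{y}_r)\spl{\kappa}$ is supported on the edge with opposite-sign values, of the form $c_1\spl{\rho}_{w_0}-c_2\,s_i\cdot\spl{\rho}_{w_0}$ with nonzero constants. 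The last point to verify is that these constants agree, up to the normalisation fixed by the uniqueness part of Corollary~\ref{cor:rationally_smooth}. Granting that, $[s_i\cdot\spl{\rho}_{w_0}]=-[\spl{\rho}_{w_0}]$, so each simple reflection acts by $-1$ and the top piece is $\sgn$. I expect this sign bookkeeping to be the main obstacle.

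The rest repeats the earlier argument. Write $\lambda$ for the equivariant isomorphism $(\rqot{H})_{\abs{H}}\to\sgn$. Then $\langle a,b\rangle=\lambda(ab)$ satisfies $\langle ga,gb\rangle=\sgn(g)\langle a,b\rangle$, since the dot action acts on the spline algebra by ring automorphisms and preserves $\llangle\spl{x}_r,\spl{y}_r\rrangle\Ms$. Perfectness follows from the same triangular matrix $M_{ij}$. The diagonal entries are $[\spl{\rho}_{u_i}\spl{\rho}^{\downarrow}_{u_i}]=a_ib_i[(u_iw_0)\cdot\spl{\rho}_{w_0}]=\pm a_ib_i[\spl{\rho}_{w_0}]\neq0$, now using the sign action. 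Therefore $(\rqot{H})_{\abs{H}-d}\cong\sgn\otimes(\rqot{H})_d^*$, and self-duality of the irreducible representations of $I_2(n)$ gives \eqref{eq:right_representation_duality}.
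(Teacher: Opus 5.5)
Your overall plan matches the paper's. The paper also multiplies the coatom spline $\spl{\rho}_{s_\alpha w_0}=\partial_\alpha\spl{\rho}_{w_0}$ by a right linear spline, and your right bases, the $\sgn$-twisted pairing and the triangular matrix all go through as you say.

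The gap is the one step that is actually new here: whether $s_i$ acts on $(\rqot{H})_{\abs{H}}$ by $-1$ or by $+1$. You grant this step, and as formulated it points the wrong way. If $t_{\alpha_j}(\spl{x}_r,\spl{y}_r)\spl{\kappa}=c_1\spl{\rho}_{w_0}-c_2\,s_i\cdot\spl{\rho}_{w_0}$ with $c_1=c_2$, you would conclude $[s_i\cdot\spl{\rho}_{w_0}]=[\spl{\rho}_{w_0}]$, which is the trivial action. The uniqueness in Corollary~\ref{cor:rationally_smooth} cannot settle this. It only fixes $\spl{\kappa}$ up to an overall scalar, and the ratio $c_2/c_1$ does not depend on that scalar. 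Also, your first choice of $\spl{\kappa}$ (value $P$ at both ends) needs $s_j(H\setminus\{\alpha_j\})\subseteq\pm H$. For $H\subsetneq\Phi^+$ this fails for at least one simple reflection; for example, with $H=\{\alpha_1,\alpha_2\}$ and $j=1$ we have $s_1(\alpha_2)=\pm\alpha_3\notin H$. So the fallback carries the whole argument.

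The missing computation is explicit and has no free constants. Put $F=\spl{\rho}_{w_0}(w_0)=\prod_{\gamma\in H}t_{w_0(\gamma)}$ and $\spl{\kappa}=\partial_{\alpha_i}\spl{\rho}_{w_0}=\spl{\rho}_{s_iw_0}$. By \eqref{eq:divided_difference}, $\spl{\kappa}(w_0)=F/t_{\alpha_i}$ and $\spl{\kappa}(s_iw_0)=-s_i(F)/t_{\alpha_i}$. The spline $\spl{r}\colon u\mapsto t_{uw_0(\alpha_i)}$ lies in $\llangle\spl{x}_r,\spl{y}_r\rrangle$. It equals $t_{\alpha_i}$ at $w_0$ and $-t_{\alpha_i}$ at $s_iw_0$. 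Since $(s_i\cdot\spl{\rho}_{w_0})(s_iw_0)=s_i(F)$, this gives
\[
    \spl{r}\,\partial_{\alpha_i}\spl{\rho}_{w_0}
    =\spl{\rho}_{w_0}+s_i\cdot\spl{\rho}_{w_0}.
\]
This is exactly the relation in the paper's table, and it yields $[s_i\cdot\spl{\rho}_{w_0}]=-[\spl{\rho}_{w_0}]$. The plus sign is the whole point. Left multiplication by $t_{\alpha_i}$ is constant across the edge and produces $\spl{\rho}_{w_0}-s_i\cdot\spl{\rho}_{w_0}$. The right linear spline instead changes sign at $s_iw_0$, which turns the minus into a plus.
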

\begin{proof}
    We will compute $(\rqot{H})_{|H|}$ directly. The rest of the statement follows exactly as it does in the proof of Proposition \ref{prop:left_quotient_duality} with the $\sgn$-tensor instead of the trivial tensor.

    Let $\alpha$ be a simple root and $\spl{r}_\gamma$ be the spline in $\C[\spl{x}_r,\spl{y}_r]$ whose evaluation at $w$ is $t_{w(\gamma)}$. The following table gives evaluations of splines at $w_0$ and $s_\alpha w_0$ for several splines in $\Ms$. Each follows directly from their definitions.

    \begin{center}
    {\renewcommand{\arraystretch}{1.8}
    \begin{tabular}{c|c|c}
     & $s_\alpha w_0$ & $w_0$ \\\hline
     $\spl{\rho}_{w_0}$ & $0 $& $\ds \prod_{\gamma \in H} t_{w_0(\gamma)}$ \\\hline
     $s_\alpha \cdot \spl{\rho}_{w_0}$ & $\ds \prod_{\gamma \in H} s_\alpha t_{w_0(\gamma)}$ & $0$ \\\hline
     $\spl{\rho}_{s_\alpha w_0}$ & $\ds \frac{-1}{t_\alpha} \prod_{\gamma \in H} t_{s_\alpha w_0(\gamma)}$ & $\ds \frac{1}{t_\alpha} \prod_{\gamma \in H} t_{ w_0(\gamma)}$ \\\hline
     $\spl{r}_{w_0(\alpha)}$& $-t_\alpha$ & $t_\alpha$
    \end{tabular}
    }
    \end{center}
So $s_\alpha \cdot \spl{\rho}_{w_0} = \spl{r}_{w_0(\alpha)} \spl{\rho}_{s_\alpha w_0} - \spl{\rho}_{w_0}$. Since the same holds replacing $\alpha$ with the other simple root $\beta$ and $(\rqot{H})_{\abs{H}}$ is one-dimensional, we get $(\rqot{H})_{\abs{H}} \cong \sgn$.
\end{proof}

\begin{lemma}\label{lem:coinv_right_subreps}
    Let $H \neq \Phi^+$. The representation $\rqot{H}$ contains a sub-representation isomorphic to 
    \begin{equation}\label{eq:h_coinvariant}
    \mathsf{Co}_H \coloneqq \begin{cases}
        \one + V_1\,q  + \cdots + V_h\,q^h + V_{h-1}\,q^{h+1} + \cdots + V_1\,q^{2h-1} + \sgn q^{2h} & \text{if }\abs{H} = 2h \\
        \one + V_1\,q +\cdots + V_h\,q^h + V_{h}\,q^{h+1} + \cdots + V_1\,q^{2h} + \sgn q^{2h+1} & \text{if }\abs{H} = 2h+1.
    \end{cases}
    \end{equation}
\end{lemma}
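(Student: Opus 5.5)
The plan mirrors the proof of Lemma~\ref{lem:trivial_left_subreps}, using the Schubert case in low degrees and duality in high degrees.

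\emph{Low degrees.} First compare low degrees with the Schubert case. By Corollary~\ref{cor:rank_gf} and Corollary~\ref{cor:schubert_hilbert_series}, the coefficients of $q^k$ in $\Hilb(\splines{\Phi^+};q)$ and $\Hilb(\Ms;q)$ agree for $0\le k\le h-1$. So the inclusion $\splines{\Phi^+}\hookrightarrow\Ms$ is an equality in those degrees. Since $J$ lives in positive degree, the right submodules $(\llangle\spl{x}_r,\spl{y}_r\rrangle\splines{\Phi^+})_k$ and $(\llangle\spl{x}_r,\spl{y}_r\rrangle\Ms)_k$ coincide for $k\le h$. This gives an equivariant injection $(\rqot{\Phi^+})_k\hookrightarrow(\rqot{H})_k$ for $0\le k\le h$. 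The dot action commutes with this inclusion of spline modules, and $\spl{x}_r,\spl{y}_r$ are fixed by it.

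By Proposition~\ref{prop:right_schubert_rep}, $(\rqot{\Phi^+})_k$ is the degree-$k$ piece of the coinvariant algebra. For $k<n/2$ (we have $h<n/2$ because $H\ne\Phi^+$), the basic invariants have degrees $2$ and $n$, so only the quadratic invariant matters in that range. Subtracting $\poly_{k-2}$ from $\poly_k$ using the decomposition of symmetric powers quoted in the proof of Theorem~\ref{thm:left_representation} leaves exactly $V_k$ for $1\le k\le h$, and $\one$ for $k=0$. Hence $\rqot{H}$ contains $\one+V_1q+\cdots+V_hq^h$.

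\emph{High degrees.} Next apply Proposition~\ref{prop:right_quotient_duality}: $(\rqot{H})_{\abs{H}-d}\cong\sgn\otimes(\rqot{H})_d$. Use $\sgn\otimes V_j\cong V_j$, which holds because the character of $V_j$ vanishes on reflections. Then degree $\abs{H}-d$ contains $V_d$ for $1\le d\le h$, and degree $\abs{H}$ contains $\sgn$.

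\emph{Overlap.} The one delicate point is to check that the low and high pieces do not collide in the same degree.
\begin{itemize}
\item If $\abs{H}=2h$, the low range $0,\dots,h$ and the reflected range $h,\dots,2h$ meet only in degree $h$. There both give $V_h$, so we keep a single copy and obtain $V_{h-1}q^{h+1},\dots,V_1q^{2h-1},\sgn q^{2h}$ above it.
\item If $\abs{H}=2h+1$, the ranges $0,\dots,h$ and $h+1,\dots,2h+1$ are disjoint, giving $V_hq^{h+1},\dots,V_1q^{2h},\sgn q^{2h+1}$.
\end{itemize}
In either case the two families together form $\mathsf{Co}_H$.

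\emph{Main obstacle.} The step needing the most care is justifying the injection on quotients at degree $k=h$ itself. We need that $(\Ms)_{h-1}=(\splines{\Phi^+})_{h-1}$, so that every generator of $J\Ms$ in degree $h$ comes from $J\splines{\Phi^+}$. I would argue this exactly as in Lemma~\ref{lem:trivial_left_subreps}, now with the right module structure; freeness in both structures is given by Lemma~\ref{lem:upper_triangular}.
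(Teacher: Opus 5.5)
Your proposal is correct and follows essentially the paper's route. The pieces $\one+V_1q+\cdots+V_hq^h$ come from the Schubert-case injection $(\rqot{\Phi^+})_k\hookrightarrow(\rqot{H})_k$ exactly as in Lemma~\ref{lem:trivial_left_subreps}, and the upper pieces come from the $\sgn$-twisted duality of Proposition~\ref{prop:right_quotient_duality} together with $\sgn\otimes V_j\cong V_j$. You spell out what the paper leaves implicit (that the coinvariant piece in degree $k\le h<n/2$ is exactly $V_k$, and the overlap bookkeeping at degree $h$), and your closing ``obstacle'' is already settled by your first paragraph, since the Hilbert-series comparison does not depend on which module structure is used.
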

\begin{proof}
    The proof that 
    \[
    \one + V_1q + \cdots + V_hq^h
    \]
    appears as a sub-representation is exactly the same as that of Lemma \ref{lem:trivial_left_subreps}, replacing $\lqot{H}$ with $\rqot{H}$. Note that since $H \neq \Phi^+$, we never see the middle degree of the coinvariant algebra appear when $n$ is even.

    The remaining graded sub-representations appear as a consequence of Proposition \ref{prop:right_quotient_duality}, since $\sgn \otimes V_i \cong V_i$ and $\sgn \otimes \one \cong \sgn$.
\end{proof}

\begin{theorem}\label{thm:right_representation}
    Let $H \subsetneq \Phi^+$ be a Hessenberg space, let $\{\alpha,\beta\}$ be the simple roots, and set $h=\lfloor |H|/2\rfloor$. If $\abs{H}$ is odd, let $\alpha$ be the $H$-dominant simple root. Recall the definition of $\mathsf{Co}_H$ in Lemma \ref{lem:coinv_right_subreps} as well as $\mathsf{P}_h$ in Section \ref{sec:left_rep}.
    Then, as an element of the graded representation ring of $I_2(n)$,
    \begin{equation}\label{eq:right_hessenberg_representation}
    \rqot{H}  \cong
    \begin{cases}
        \ds
        \mathsf{Co}_H
        \oplus
        q^h\left(C_{\alpha} \oplus C_\beta - \mathsf{P}_h - \mathsf{P}_{h-1}\right),
            &\text{if } \abs{H}=2h,\\[10pt]
        \ds
        \mathsf{Co}_H
        \oplus
        q^h\left(C_\alpha - \mathsf{P}_h\right)
        \oplus 
        q^{h+1}\left(C_\beta - \mathsf{P}_h\right),
            &\text{if } \abs{H}=2h+1.
    \end{cases}
    \end{equation}
\end{theorem}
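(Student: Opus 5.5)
The proof will mirror that of Theorem~\ref{thm:left_representation}, with the left equivariant freeness \eqref{eq:left_equivariant_freeness} replaced by the right version \eqref{eq:right_equivariant_freeness}. The main difference is that $\C[\spl{x}_r,\spl{y}_r]$ carries the trivial action, so
\[
[(\Ms)_d]=\sum_{j=0}^d (j+1)[(\rqot{H})_{d-j}].
\]
Hence the graded character of $\rqot{H}$ is determined recursively by the graded characters of $(\Ms)_d$. In turn, the left computation determines $[(\Ms)_d]$ for every $d$: by \eqref{eq:left_equivariant_freeness} and Theorem~\ref{thm:left_representation},
\[
[(\Ms)_d]=\sum_j [\poly_j]\,[(\lqot{H})_{d-j}].
\]
Formally, the graded character therefore satisfies
\[
\rqot{H}=(1-q)^2\,\Hilb^{I_2(n)}(\poly)\cdot \lqot{H}.
\]
The plan is to evaluate this product degree by degree and check that it equals the claimed formula. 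The irreducible decomposition of $\poly_d$ given in the proof of Theorem~\ref{thm:left_representation} covers $d<n/2$. For larger $d$ we use $(1-q^2)(1-q^n)\Hilb^{I_2(n)}(\poly)=$ the coinvariant character of Proposition~\ref{prop:right_schubert_rep}.

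To organize the computation, split $\lqot{H}$ into its trivial part $T(q)\one$ and its correction part $E(q)$. Here $E=q^h(C_\alpha\oplus C_\beta-\mathsf{P}_h-\mathsf{P}_{h-1})$ in the even case and $E=(q^h+q^{h+1})(C_\alpha-\mathsf{P}_h)$ in the odd case. The trivial part contributes
\[
(1-q)^2\Hilb^{I_2(n)}(\poly)\,T(q).
\]
When $T(q)=(1+q)(1+\cdots+q^{m-1})$, this equals the truncated coinvariant-type series
\[
\frac{(1-q^2)(1-q^m)}{\det(I-qV_1)}.
\]
Since $m=|H|<n$, expanding this via the decomposition of $\poly_d$ should yield exactly $\mathsf{Co}_H$. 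Note that $\one\,q^{|H|}$ becomes $\sgn\,q^{|H|}$ because $(1-q^2)(1-q^m)/\det(I-qV_1(g))$ at a reflection gives $(1-q^m)/(1-q)\cdot(1+q)/(1+q)$ with alternating signs. I would verify this identity by comparing characters on rotations and on reflections separately.

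The correction part needs $(1-q)^2\Hilb^{I_2(n)}(\poly)\,E(q)$. The point is that $C_\gamma-\mathsf{P}_h$ is a sum of $V_j$ with $j>h$ (and possibly an $\eta$). Tensoring such constituents with $\poly_d$ and then multiplying by $(1-q)^2$ should telescope, using $V_1\otimes V_j=V_{j-1}\oplus V_{j+1}$. In the odd case I expect $(1+q)(C_\alpha-\mathsf{P}_h)$ to become $(C_\alpha-\mathsf{P}_h)+q(C_\beta-\mathsf{P}_h)$. Character values make this plausible, since $C_\alpha$ and $C_\beta$ agree on rotations and differ by sign on the classes of reflections, where $\det(I-qV_1)=1-q^2$. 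So the product will be checked on characters: at rotations $r^a$, and at the two reflection classes when $n$ is even.

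The main obstacle is this character bookkeeping, especially confirming that no spurious $V_j$ with $j\le h$ arise and that $C_\alpha$ switches to $C_\beta$ in degree $h+1$. As a safeguard I would also use Proposition~\ref{prop:right_quotient_duality}. Once degrees up to $h$ are established, duality gives degree $h+1$ as $\sgn\otimes(\rqot{H})_{|H|-h-1}$. We have $\sgn\otimes(C_\alpha-\mathsf{P}_h)=C_\beta-\mathsf{P}_h$, because tensoring by $\sgn$ swaps $\eta_0$ and $\eta_1$ (that is, swaps $\eta_\alpha$ and $\eta_\beta$ when $n$ is even) and fixes every $V_j$. This reduces the computation to degrees $\le h$. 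In those degrees, the lower-degree pieces of $\rqot{H}$ are $\mathsf{Co}_H$ by Lemma~\ref{lem:coinv_right_subreps} together with a dimension count from Corollary~\ref{cor:rank_gf}. The middle degree $h$ is then pinned down by the displayed recursion, with $[(\Ms)_h]$ computed from the left side.
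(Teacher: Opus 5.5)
Your fallback argument in the last paragraph is correct, and it is essentially the paper's proof. It works as follows:
\begin{itemize}
\item Compute $[(\Ms)_h]$ once from \eqref{eq:left_equivariant_freeness} via Theorem~\ref{thm:left_representation}.
\item Compute it once more from \eqref{eq:right_equivariant_freeness}, using $(\rqot{H})_0\cong\one$ and $(\rqot{H})_k\cong V_k$ for $1\le k<h$ (Lemma~\ref{lem:coinv_right_subreps} plus Corollary~\ref{cor:rank_gf}).
\item Solve to get $(\lqot{H})_h\oplus V_h\cong(\rqot{H})_h\oplus\one^{\oplus 2}$.
\item Get degree $h+1$ and the remaining degrees from Proposition~\ref{prop:right_quotient_duality}, using $\sgn\otimes\eta_\alpha\cong\eta_\beta$.
\end{itemize}
That argument needs only the single degree $h$.

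Your headline plan, however, would fail as stated. You proposed splitting $\lqot{H}$ into its trivial part $T(q)\one$ and its correction part $E(q)$, and transforming each by $(1-q)^2\,\mathrm{ch}(\poly)$ separately. Neither piece maps to the corresponding piece of the answer. For example, take $n=5$ and $H=\{\alpha_1,\alpha_2\}$, so $T(q)=(1+q)^2$. At a rotation by $\theta=2\pi/5$, the trivial contribution is $(1-q^2)^2/(1-2\cos\theta\,q+q^2)$. This is not even a polynomial, whereas $\mathsf{Co}_H$ has character $1+2\cos\theta\,q+q^2$ there. Likewise, $(1-q)^2(q^h+q^{h+1})\chi_{C_\alpha-\mathsf{P}_h}(g)/\det(I-qV_1(g))$ agrees with $q^h\chi_{C_\alpha-\mathsf{P}_h}(g)+q^{h+1}\chi_{C_\beta-\mathsf{P}_h}(g)$ only at reflections, where $\det(I-qV_1)=1-q^2$ and where your plausibility check was made. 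At rotations it fails, e.g.\ for $n=5$, $|H|=3$, where $\chi_{V_2}(r)\neq 0$. Only the sum of the two contributions is a polynomial, because cross-cancellation between them is essential. So either expand the full product $(1-q)^2\,\mathrm{ch}(\poly)\cdot\lqot{H}$ without splitting, or simply rely on the degree-$h$ comparison above.
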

\begin{proof}
    Comparing equations \eqref{eq:left_equivariant_freeness} and \eqref{eq:right_equivariant_freeness} with Lemmas \ref{lem:trivial_left_subreps} and \ref{lem:coinv_right_subreps} at degree $h$, one gets 
    \[
    (\lqot{H})_h \oplus (h-1)\,\one \oplus \bigoplus_{j=1}^h (h+1-j) V_j \cong (\rqot{H})_h \oplus (h+1)\,\one \oplus \bigoplus_{j=1}^{h-1}(h+1-j) V_j.
    \]
    Canceling, we get 
    \[ (\lqot{H})_h \oplus V_h \cong (\rqot{H})_h \oplus 2\one.\]
    Comparing with Theorem \ref{thm:left_representation} $(\rqot{H})_h$ computes as claimed. If $(\rqot{H})_{h+1}$ is needed, then Proposition \ref{prop:right_quotient_duality} applies, noting that when $n$ is even then $\eta_\alpha \otimes \sgn = \eta_\beta$. The remaining graded pieces are handled by Lemma \ref{lem:coinv_right_subreps} for dimension reasons. 
\end{proof}

\printbibliography

\end{document}